\def\l@subsection{\@tocline{2}{0pt}{2.5pc}{5pc}{}}
\renewcommand\tocchapter[3]{%
  \indentlabel{\@ifnotempty{#2}{\ignorespaces#2.\quad}}#3%
}
\newcommand\@dotsep{4.5}
\def\@tocline#1#2#3#4#5#6#7{\relax
  \ifnum #1>\c@tocdepth 
  \else
    \par \addpenalty\@secpenalty\addvspace{#2}%
    \begingroup \hyphenpenalty\@M
    \@ifempty{#4}{%
      \@tempdima\csname r@tocindent\number#1\endcsname\relax
    }{%
      \@tempdima#4\relax
    }%
    \parindent\z@ \leftskip#3\relax \advance\leftskip\@tempdima\relax
    \rightskip\@pnumwidth plus1em \parfillskip-\@pnumwidth
    #5\leavevmode\hskip-\@tempdima{#6}\nobreak
    \leaders\hbox{$\m@th\mkern \@dotsep mu\hbox{.}\mkern \@dotsep mu$}\hfill
    \nobreak
    \hbox to\@pnumwidth{\@tocpagenum{#7}}\par
    \nobreak
    \endgroup
  \fi}
\renewcommand\csname r@tocindent0\endcsname{0pt}
\def\l@subsection{\@tocline{2}{0pt}{2.5pc}{5pc}{}}
\newtheorem{thm}{Theorem}[section]
\newtheorem{lemma}[thm]{Lemma}
\newtheorem{proposition}[thm]{Proposition}
\newtheorem{definition}[thm]{Definition}
\newtheorem{corollary}[thm]{Corollary}
\newtheorem{question}[thm]{Question}
\newtheorem*{maintheorem*}{Main Theorem}
\newtheorem*{theorem*}{Theorem}
\newtheorem*{corollary*}{Corollary}
\newtheorem*{proposition*}{Proposition}
\newcommand{\p}{\mathbb{P}}
\newcommand{\q}{\mathbb{Q}}
\newcommand{\ot}{\mathrm{ot}}
\newcommand{\dom}{\mathrm{dom}}
\newcommand{\h}{\mathrm{ht}}
\newcommand{\res}{\upharpoonright}
\long\def\comment#1{}
\begin{document}

\title{A Strong Kurepa Tree}

\author{John Krueger}

\address{John Krueger, Department of Mathematics, 
	University of North Texas,
	1155 Union Circle \#311430,
	Denton, TX 76203, USA}
\email{john.krueger@unt.edu}

\subjclass{03E05, 03E15, 03E35}

\keywords{Kurepa tree, strong Kurepa tree, generalized Baire space}

\date{July 1, 2025}

\begin{abstract}
	We prove that it is consistent that there exists a Kurepa tree $T$ such that 
	${}^{\omega_1}2$ is a continuous image of the topological space $[T]$ 
	consisting of all cofinal branches of $T$ with respect to the cone topologies. 
	This result solves an open problem due to 
	Bergfalk, Chodounsk\'{y}, Guzm\'{a}n, and Hru\v{s}\'{a}k. 
	We also prove that any Kurepa tree with the above property contains an 
	Aronszajn subtree.
\end{abstract}

\maketitle

\tableofcontents

\section{Introduction}

In this article we are concerned with the topological space consisting of 
the set of all cofinal branches $[T]$ of a tree $T$ with height $\omega_1$ with the cone topology.  
More specifically, a basic open set in this space is any set of the form 
$\{ g \in [T] : x \in g \}$ for some $x \in T$. 
Recall that a Kurepa tree is an $\omega_1$-tree (that is, a tree with height $\omega_1$ 
and countable levels) such that $T$ has at least $\omega_2$-many cofinal branches. 
For any Kurepa tree $T$, there exists a binary downwards closed subtree $U$ 
of ${}^{<\omega_1} 2 = \{ f : \exists \alpha < \omega_1 \ (f : \alpha \to 2) \}$ 
such that $[T]$ and $[U]$ are homeomorphic. 
So $[T]$ can be thought of as a subspace of the generalized Baire space 
${}^{\omega_1}2$, which is the set of all functions from $\omega_1$ into $2$.

In recent work in generalized descriptive set theory, 
Bergfalk, Chodounsk\'{y}, Guzm\'{a}n, and Hru\v{s}\'{a}k introduced the 
idea of a \emph{strong Kurepa tree}, which is a Kurepa tree $T$ 
such that ${}^{\omega_1}2$ is a continuous image of $[T]$ 
with respect to the cone topologies (\cite{chodounsky}). 
This definition arose out of the following context. 
Consider the statement that there exists a 
closed subset of the product space ${}^{\omega_1}2 \times {}^{\omega_1}2$ which is 
universal for the closed subsets of ${}^{\omega_1}2$, but its universality 
is not absolute with respect to any $\sigma$-closed forcing which adds a 
new subset of $\omega_1$. 
They formulated the idea of a strong Kurepa tree as a natural kind object whose 
existence implies this statement. 
Later they were able to derive the statement as a consequence of a somewhat weaker 
hypothesis which follows from $\Diamond^+$. 
This left open the question of whether the existence of a strong Kurepa tree is consistent. 
The main result of the article solves this problem affirmatively.

\begin{maintheorem*}
	Assume \textsf{CH}. 
	Let $1 \le \kappa \le 2^{\omega_1}$ be a cardinal. 
	Then there exists a forcing poset which is $\sigma$-closed, $\omega_2$-c.c., and  
	adds a Kurepa tree $T$ and a surjective $\kappa$-to-one 
	continuous function $F : [T] \to {}^{\omega_1}2$.
\end{maintheorem*}

We also prove that any strong Kurepa tree contains an Aronszajn 
subtree.\footnote{This fact was also observed by 
Bergfalk, Chodounsk\'{y}, Guzm\'{a}n, and Hru\v{s}\'{a}k independently.} 
Since there can consistently exist Kurepa trees which do not contain Aronszajn subtrees, 
not every Kurepa tree is necessarily strong (\cite{devlin}).

\section{A Non-Strong Kurepa Tree}

We begin by proving that the standard generic Kurepa tree obtained by forcing with 
countable conditions is not strong. 
This result introduces some key definitions and 
ideas which we use later and serves as a warm-up 
for what follows. 
Throughout the whole article we use the basic methods due to Jech for forcing 
an $\omega_1$-tree with countable initial segments 
(\cite{jech67}, \cite{jech72}).

For any ordinal $\gamma$, let ${}^{\gamma}2$ denote the set of all functions 
from $\gamma$ to $2 = \{ 0, 1 \}$, and let 
${}^{\le \gamma}2 = \bigcup \{ {}^{\beta}2 : \beta < \gamma \}$.

\begin{definition}
	A \emph{standard binary countable tree} is a countable 
	set $t \subseteq {}^{<\omega_1}2$ satisfying that for some $\delta_t < \omega_1$:
	\begin{enumerate}
		\item $t$ is a countable subset of ${}^{\le \delta_t}2$;
		\item if $x \in t$ then for all $\beta < \dom(x)$, 
		$x \res \beta \in t$;
		\item for all $x \in t \cap {}^{<\delta_t}2$, $x^\frown 0$ and $x^\frown 1$ are in $t$;
		\item for all $x \in t$, there exists some $y \in t \cap {}^{\delta_t}2$ 
		such that $x \subseteq y$.
	\end{enumerate}
	If $t$ and $u$ are standard binary countable trees, then $u$ \emph{end-extends} 
	$t$ if $\delta_t \le \delta_u$ and $u \cap {}^{\le \delta_t}2 = t$.
	\end{definition}

For a standard binary countable tree $t$, 
we write $\text{top}(t)$ for $t \cap {}^{\delta_t}2$.
Note that if $u$ end-extends $t$ and $u \ne t$, then $\delta_t < \delta_u$.

\begin{lemma}
	Suppose that $\langle t_n : n < \omega \rangle$ is a sequence of distinct 
	standard binary countable trees such that for each $n < \omega$, 
	$t_{n+1}$ is an end-extension of $t_n$. 
	Let $t' = \bigcup \{ t_n : n < \omega \}$ and 
	$\delta = \sup \{ \delta_{t_n} : n < \omega \}$. 
	Assume that $B$ is a countable collection of cofinal branches of $t'$ 
	such that every member of $t'$ belongs to some member of $B$. 
	Then $t = t' \cup B$ is a standard binary countable tree, $\delta_t = \delta$, 
	$\text{top}(t) = B$, and $t$ end-extends $t_n$ for all $n < \omega$.
\end{lemma}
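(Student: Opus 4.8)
The plan is to verify directly that $t = t' \cup B$ satisfies the four clauses in the definition of a standard binary countable tree with $\delta_t = \delta$, and then to check the end-extension relation clause by clause. First I would record the basic structural facts. Since the $t_n$ are distinct and each $t_{n+1}$ end-extends $t_n$, the remark following the definition gives that $\langle \delta_{t_n} : n < \omega \rangle$ is strictly increasing, so $\delta = \sup_n \delta_{t_n}$ is a countable limit ordinal; in particular $\delta < \omega_1$. As each $t_n$ is countable and $B$ is countable, $t$ is countable. Moreover every element of $t'$ lies at some level $< \delta$ while every element of $B$ lies at level exactly $\delta$, so $t \subseteq {}^{\le \delta}2$, $t \cap {}^{<\delta}2 = t'$, and $t \cap {}^{\delta}2 = B$; this already gives clause (1) and, once the remaining clauses are verified, the identities $\delta_t = \delta$ and $\text{top}(t) = B$.

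For clause (2), downward closure, I would split on whether $x \in t'$ or $x \in B$. If $x \in t'$ then $x \in t_n$ for some $n$, and downward closure of $t_n$ gives $x \res \beta \in t_n \subseteq t$ for all $\beta < \dom(x)$. If $x \in B$ then $x$ is a cofinal branch of $t'$, so by definition $x \res \beta \in t' \subseteq t$ for every $\beta < \delta = \dom(x)$. For clause (3), let $x \in t \cap {}^{<\delta}2 = t'$. Using that $\delta = \sup_n \delta_{t_n}$, choose $m$ with $\dom(x) < \delta_{t_m}$; a short comparison of indices via the end-extension relation shows $x \in t_m$, and indeed $x \in t_m \cap {}^{<\delta_{t_m}}2$, so clause (3) for $t_m$ yields $x^\frown 0, x^\frown 1 \in t_m \subseteq t'$. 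For clause (4), if $x \in B$ take $y = x$, and if $x \in t'$ then the hypothesis that every member of $t'$ belongs to some member of $B$ produces $y \in B = t \cap {}^{\delta}2$ with $x \subseteq y$.

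Finally, for the end-extension claim I would fix $n$ and compute $t \cap {}^{\le \delta_{t_n}}2$. Since $B$ sits at level $\delta > \delta_{t_n}$, this intersection equals $t' \cap {}^{\le \delta_{t_n}}2 = \bigcup_m (t_m \cap {}^{\le \delta_{t_n}}2)$. For $m \ge n$ the end-extension hypothesis gives $t_m \cap {}^{\le \delta_{t_n}}2 = t_n$, while for $m < n$ one has $t_m \subseteq t_n$ and $t_m \subseteq {}^{\le \delta_{t_m}}2 \subseteq {}^{\le \delta_{t_n}}2$, so the corresponding term is again contained in $t_n$; hence the union is exactly $t_n$, as required. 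I do not expect any genuine obstacle here, since the argument is routine bookkeeping of levels; the only point that uses more than the definitions is clause (3), where it is essential that $\delta$ is attained as a strict supremum, so that every level below $\delta$ already appears strictly inside some $t_m$.
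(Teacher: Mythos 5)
Your proof is correct, and since the paper omits the argument entirely (stating only that ``the proof is straightforward''), your clause-by-clause verification is exactly the routine argument intended. The two points you rightly single out -- that distinctness plus end-extension forces $\langle \delta_{t_n} : n < \omega \rangle$ to be strictly increasing (so every node of $t'$ lies strictly below $\delta$, which is what makes clause (3) and the computation $t \cap {}^{\le \delta_{t_n}}2 = t_n$ go through), and the identification of each branch in $B$ with a function in ${}^{\delta}2$ -- are the only places where anything beyond unwinding definitions happens.
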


The proof is straightforward.

\begin{definition}[\cite{stewart}]
	Define $\p^*$ as the forcing poset consisting of conditions $p$ satisfying:
	\begin{enumerate}
		\item $p$ is a function whose domain is a countable subset of $\omega_2$;
		\item $0 \in \dom(p)$ and $p(0)$ is 
		a standard binary countable tree;
		\item for all $\alpha \in \dom(p) \setminus \{ 0 \}$, 
		$p(\alpha) \in \text{top}(p(0))$.
	\end{enumerate}
	Let $q \le p$ if $\dom(p) \subseteq \dom(q)$, 
	$q(0)$ end-extends $p(0)$, and for all $\alpha \in \dom(p) \setminus \{ 0 \}$, 
	$p(\alpha) \subseteq q(\alpha)$.
\end{definition}

For notational simplicity, whenever $p \in \p^*$ and $\alpha < \omega_2$, 
we occasionally write 
$p(\alpha)$ even when we do not know if $\alpha \in \dom(p)$, and let $p(\alpha)$ 
denote the empty-set in the case that it is not. 
For any $p \in \p^*$, we write $\delta_{p(0)}$ as $\delta_p$.

\begin{definition}
	Let $\dot T^*$ be a $\p^*$-name for 
	$$
	\bigcup \{ p(0) : p \in \dot G_{\p^*} \},
	$$
	and for each $0 < \alpha < \omega_2$, let 
	$\dot b_\alpha$ be a $\p^*$-name for 
	$$
	\bigcup \{ p(\alpha) : p \in \dot G_{\p^*} \},
	$$
	where $\dot G_{\p^*}$ is the canonical name for a generic filter on $\p^*$.
\end{definition}

The following summarizes the main facts about $\p^*$, which we state without proof.

\begin{thm}
	The forcing poset $\p^*$ is $\sigma$-closed, forces 
	that $\dot T^*$ is a normal binary $\omega_1$-tree, and forces that 
	$\langle \dot b_\alpha : 0 < \alpha < \omega_2 \rangle$ is a sequence of 
	distinct cofinal branches of $\dot T^*$. 
	If \textsf{CH} holds, then $\p^*$ is $\omega_2$-c.c.\ and forces that 
	$\dot T^*$ is a Kurepa tree.
\end{thm}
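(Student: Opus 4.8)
The plan is to verify the four assertions in sequence, since each later one leans on the earlier ones. Throughout I would work with a generic filter $G$ on $\p^*$ and write $T^* = \bigcup\{p(0) : p \in G\}$ and $b_\alpha = \bigcup\{p(\alpha):p\in G\}$ for the objects interpreting $\dot T^*$ and $\dot b_\alpha$.

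First I would establish $\sigma$-closure, as everything else uses it to preserve $\omega_1$ and to run density arguments. Given a descending sequence $\langle p_n : n<\omega\rangle$, I would build a lower bound directly from the Lemma. Set $t' = \bigcup_n p_n(0)$ and $\delta = \sup_n \delta_{p_n}$; since each $p_{n+1}(0)$ end-extends $p_n(0)$, the sequence of trees is as in the hypothesis of the Lemma. The point requiring care is the choice of the top level: I would let $B$ be a countable set of cofinal branches of $t'$ that (i) covers every node of $t'$ — possible because clause (4) for each $p_n(0)$, together with end-extension, guarantees every node extends cofinally in $t'$ — and (ii) contains, for every $\alpha \in \bigcup_n \dom(p_n)\setminus\{0\}$, the branch $\bigcup_n p_n(\alpha)$. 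This last branch is genuinely cofinal in $t'$ because once $\alpha$ enters a domain the values $p_n(\alpha)$ lie on the top level $\delta_{p_n}$ and these levels converge to $\delta$. The Lemma then makes $q(0) = t' \cup B$ a standard binary countable tree with $\delta_q = \delta$ and $\text{top}(q(0)) = B$; setting $q(\alpha) = \bigcup_n p_n(\alpha)$ for the remaining coordinates yields a condition with $q \le p_n$ for all $n$.

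Next I would read off that $T^*$ is a normal binary $\omega_1$-tree. That $T^*$ is a binary subtree of ${}^{<\omega_1}2$ is immediate from clauses (1)--(3); normality (every node splits above it and extends to every higher existing level) and the fact that the height is exactly $\omega_1$ follow from routine density arguments using clauses (3) and (4), passing through limit stages via $\sigma$-closure. The essential point is countable levels: the end-extension clause forces $T^* \cap {}^{\le \delta_p}2 = p(0)$ for every $p \in G$ (any $r \in G$ below $p$ satisfies $r(0)\cap {}^{\le\delta_p}2 = p(0)$, and every element of $G$ is compatible with $p$), so the whole of $T^*$ below level $\delta_p$ is the countable set $p(0)$; since the $\delta_p$ are cofinal in $\omega_1$ and $\sigma$-closure preserves $\omega_1$, every level is countable. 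The same style of argument shows each $b_\alpha$ is a cofinal branch (to place $\alpha$ in a domain, extend the condition; for cofinality, extend $\delta_p$ and drag $p(\alpha)$ upward) and that $b_\alpha \ne b_{\alpha'}$ for $\alpha \ne \alpha'$ (force $p(\alpha)$ and $p(\alpha')$ onto distinct nodes of $\text{top}(p(0))$ by splitting).

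Finally, assuming \textsf{CH}, I would prove the $\omega_2$-c.c.\ by a $\Delta$-system plus counting argument, and this is where \textsf{CH} is indispensable. Under \textsf{CH} the set ${}^{<\omega_1}2$ has size $\omega_1$, so there are only $\omega_1$-many standard binary countable trees and, for a fixed tree $t$ and a fixed countable root $r$, only $\omega_1$-many possibilities for a restriction $p \res r$ taking values in the countable set $\text{top}(t)$. Given any $\omega_2$-many conditions, I would first thin to $\omega_2$-many sharing a single $t = p(0)$, apply the $\Delta$-system lemma to their countable domains to extract a root $r$ with $0 \in r$, and then thin again to $\omega_2$-many agreeing on $p \res r$. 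Any two survivors $p, q$ are then directly compatible: $p \cup q$ is a well-defined function (they agree on the overlap $r$) with value $t$ at $0$ and all other values in $\text{top}(t)$, hence a common extension. This bounds antichains by $\omega_1$. Cardinal preservation ($\sigma$-closed preserves $\omega_1$, the $\omega_2$-c.c.\ preserves $\omega_2$) then turns the $\omega_2$-many distinct cofinal branches $\langle b_\alpha : 0<\alpha<\omega_2\rangle$ into a witness that the $\omega_1$-tree $T^*$ is Kurepa. The main obstacle is thus not any single deep step but ensuring the top-level choice in the $\sigma$-closure argument simultaneously realizes all the branch names while staying countable, together with the \textsf{CH}-driven counting that keeps the chain condition at $\omega_2$.
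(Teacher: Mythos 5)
Your proof is correct, and since the paper states this theorem without proof (it summarizes standard facts about the Stewart--Jech forcing), the right comparison is with the machinery the paper does set up -- your argument is precisely that: the lower-bound construction of Lemmas 2.2/2.7 with a countable covering family of branches augmented by the branches $\bigcup_n p_n(\alpha)$, routine density arguments for height, levels, and distinctness of branches, and the \textsf{CH}-driven counting plus $\Delta$-system argument for the $\omega_2$-c.c. The one imprecision is in the $\sigma$-closure step: Lemma 2.2 assumes the trees $p_n(0)$ are \emph{distinct}, so you should either pass to a subsequence on which the $\delta_{p_n}$ strictly increase or treat separately the degenerate case where they stabilize (in which case the trees are eventually constant and a lower bound is immediate) -- the same case split the paper carries out explicitly in the analogous Lemma 6.9.
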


The proofs of the next two lemmas are straightforward.

\begin{lemma}
	Suppose that $p \in \p^*$. 
	Then there is $q \le p$ 
	such that for all $x \in p(0)$ there exists some 
	$\alpha \in \dom(q) \setminus \{ 0 \}$ 
	such that $x \subseteq q(\alpha)$.
\end{lemma}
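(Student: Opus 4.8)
The plan is to leave the tree coordinate untouched and simply attach enough new branch coordinates to cover every node of $p(0)$. Since $q \le p$ only requires that $q(0)$ end-extends $p(0)$, and end-extension is reflexive, I would take $q(0) = p(0)$; then $\text{top}(q(0)) = \text{top}(p(0))$ and the existing coordinates can be kept fixed. The entire content of the lemma comes from clause (4) in the definition of a standard binary countable tree: every $x \in p(0)$ lies below some $y \in \text{top}(p(0))$. So the strategy is to realize one such top for each $x$ as a branch coordinate of $q$.

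First I would fix, for each $x \in p(0)$, a node $y_x \in \text{top}(p(0))$ with $x \subseteq y_x$, using clause (4). Because $p(0)$ is countable and $\dom(p)$ is a countable subset of $\omega_2$, the set $\omega_2 \setminus \dom(p)$ has size $\omega_2$, so I may choose an injection $x \mapsto \alpha_x$ from $p(0)$ into $\omega_2 \setminus \dom(p)$. I then define $q$ by setting $\dom(q) = \dom(p) \cup \{ \alpha_x : x \in p(0) \}$, letting $q \res \dom(p) = p$, and putting $q(\alpha_x) = y_x$ for each $x \in p(0)$ (note that distinct $x$ may be assigned the same top $y_x$, which is harmless since nothing forces the branch coordinates to be distinct).

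The verification is routine. The domain of $q$ is a countable subset of $\omega_2$ containing $0$ and $q(0) = p(0)$ is a standard binary countable tree, giving clauses (1) and (2); for clause (3), every $\alpha \in \dom(q) \setminus \{0\}$ has $q(\alpha)$ equal either to some $p(\alpha) \in \text{top}(p(0))$ or to some $y_x \in \text{top}(p(0)) = \text{top}(q(0))$. For $q \le p$ we have $\dom(p) \subseteq \dom(q)$, $q(0) = p(0)$ end-extends $p(0)$, and $p(\alpha) = q(\alpha) \subseteq q(\alpha)$ for $\alpha \in \dom(p) \setminus \{0\}$. Finally, each $x \in p(0)$ satisfies $x \subseteq y_x = q(\alpha_x)$ with $\alpha_x \in \dom(q) \setminus \{0\}$, which is exactly the desired conclusion.

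There is no genuine obstacle here; the single point worth a moment's care is the choice of the indices $\alpha_x$ as \emph{fresh} ordinals outside $\dom(p)$. This simultaneously guarantees that the new coordinates are unconstrained, so they may be set to whatever tops clause (4) produces, and that the existing branch coordinates are left undisturbed. It is the availability of $\omega_2$-many indices, rather than anything about the combinatorics of $p(0)$, that makes the argument immediate.
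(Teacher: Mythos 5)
Your proof is correct; the paper states this lemma without proof (declaring it straightforward), and your argument --- keeping $q(0) = p(0)$, which end-extends itself, and adding fresh coordinates $\alpha_x \in \omega_2 \setminus \dom(p)$ assigned to tops $y_x \supseteq x$ supplied by clause (4) of the definition of a standard binary countable tree --- is exactly the intended routine argument. The only point requiring care, choosing the new indices outside $\dom(p)$ so that the existing branch coordinates are undisturbed and the new ones are unconstrained, is handled correctly.
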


\begin{lemma}
	Suppose that $\langle p_n : n < \omega \rangle$ is a decreasing sequence of 
	conditions in $\p^*$ such that $\delta_{p_n} < \delta_{p_{n+1}}$ for all $n < \omega$. 
	Let $t' = \bigcup \{ p_n(0) : n < \omega \}$, 
	$\delta = \sup \{ \delta_{p_n} : n < \omega \}$, 
	and $X = \bigcup \{ \dom(p_n) : n < \omega \}$. 
	For each $\alpha \in X \setminus \{ 0 \}$, 
	let $b_\alpha = \bigcup \{ p_n(\alpha) : n < \omega \}$. 
	Assume that $B$ is a countable set of cofinal branches of $t'$ such that 
	$\{ b_\alpha : \alpha \in X \} \subseteq B$ and every element of $t'$ 
	is in some member of $B$. 
	Define $q$ with domain $X$ so that $q(0) = t' \cup B$ and for all 
	$\alpha \in X \setminus \{ 0 \}$, $q(\alpha) = b_\alpha$. 
	Then $q \in \p^*$, $\delta_q = \delta$, $\text{top}(q(0)) = B$, 
	and $q \le p_n$ for all $n < \omega$.
\end{lemma}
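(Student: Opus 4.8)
The plan is to reduce everything to the earlier lemma on unions of end-extending standard binary countable trees, applied to the sequence $\langle p_n(0) : n < \omega \rangle$. First I would verify that this sequence satisfies the hypotheses of that lemma. Each $p_n(0)$ is a standard binary countable tree by clause (2) in the definition of $\p^*$. Since the sequence $\langle p_n : n < \omega \rangle$ is decreasing, the definition of the ordering on $\p^*$ gives that $p_{n+1}(0)$ end-extends $p_n(0)$ for every $n$; and because $\delta_{p_n} < \delta_{p_{n+1}}$, the trees $p_n(0)$ have strictly increasing heights and are in particular pairwise distinct. Finally $\sup_n \delta_{p_n} = \delta$, and $B$ is by hypothesis a countable set of cofinal branches of $t' = \bigcup_n p_n(0)$ with every member of $t'$ lying on some branch in $B$. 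Thus all hypotheses hold.

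Applying the earlier lemma to $\langle p_n(0) : n < \omega \rangle$ then yields directly that $q(0) = t' \cup B$ is a standard binary countable tree with $\delta_{q(0)} = \delta$ and $\text{top}(q(0)) = B$, and that $q(0)$ end-extends $p_n(0)$ for every $n$. This already establishes the assertions $\delta_q = \delta$ and $\text{top}(q(0)) = B$.

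Next I would confirm $q \in \p^*$ by checking the three clauses of its definition. The domain $X = \bigcup_n \dom(p_n)$ is a countable union of countable subsets of $\omega_2$, hence a countable subset of $\omega_2$ containing $0$, which is clause (1). Clause (2) is exactly the statement that $q(0)$ is a standard binary countable tree, already obtained above. For clause (3), fix $\alpha \in X \setminus \{ 0 \}$; then $q(\alpha) = b_\alpha$, and $b_\alpha \in B = \text{top}(q(0))$ since $\{ b_\alpha : \alpha \in X \} \subseteq B$ by hypothesis. Hence $q \in \p^*$.

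Finally, to see $q \le p_n$ for each $n$, I would check the three requirements of the ordering. We have $\dom(p_n) \subseteq X = \dom(q)$; the tree $q(0)$ end-extends $p_n(0)$ by the application of the earlier lemma; and for each $\alpha \in \dom(p_n) \setminus \{ 0 \}$, the inclusions $p_n(\alpha) \subseteq p_m(\alpha)$ for $m \ge n$ (again from the decreasing sequence) give $p_n(\alpha) \subseteq \bigcup_m p_m(\alpha) = b_\alpha = q(\alpha)$. Thus $q \le p_n$ for all $n$. None of these steps presents a genuine obstacle; the only point requiring any care is the bookkeeping needed to invoke the earlier lemma, in particular using the strict inequality $\delta_{p_n} < \delta_{p_{n+1}}$ to guarantee that the trees $p_n(0)$ are distinct and end-extend one another so that the lemma applies.
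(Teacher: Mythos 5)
Your proof is correct and is exactly the argument the paper has in mind when it dismisses this lemma as straightforward: apply Lemma 2.2 to the sequence $\langle p_n(0) : n < \omega \rangle$ (using $\delta_{p_n} < \delta_{p_{n+1}}$ for distinctness and the ordering on $\p^*$ for end-extension), then verify the clauses of Definition 2.3 and of the ordering directly. All the bookkeeping points are handled, including the one that actually matters: $q(\alpha) = b_\alpha \in B = \text{top}(q(0))$, which is what makes clause (3) of Definition 2.3 hold for $q$.
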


The next lemma is the basis for showing that the generic Kurepa tree is not strong.

\begin{lemma}
	Suppose that $T$ is a strong Kurepa tree. 
	Then there exists a function $H : T \to {}^{\le \omega_1}2$ satisfying:
	\begin{enumerate}
		\item $x <_T y$ implies $H(x) \subseteq H(y)$;
		\item for all $s \in {}^{<\omega_1}2$, there exists some $x \in T$ 
		such that $s \subseteq H(x)$;
		\item for any cofinal branch $b$ of $T$ and 
		for all $\zeta < \omega_1$, there exists 
		some $y \in b$ such that $\zeta \le \dom(H(y))$.
	\end{enumerate}
\end{lemma}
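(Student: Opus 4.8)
The plan is to extract $H$ directly from a witnessing continuous surjection $F : [T] \to {}^{\omega_1}2$, reading continuity as a local-determination principle. For $x \in T$ write $[T]_x = \{ g \in [T] : x \in g \}$ for the basic cone at $x$. Unwinding the definition of the cone topologies, continuity of $F$ says exactly the following: whenever $F(g) \supseteq s$ for some $s \in {}^{<\omega_1}2$, there is a node $x \in g$ with $[T]_x \subseteq F^{-1}(\{ f : s \subseteq f \})$, that is, $F(h) \supseteq s$ for \emph{every} cofinal branch $h$ passing through $x$. This, together with the surjectivity of $F$, is the only external input I will use.

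Define $H(x)$ to be the longest common initial segment of $\{ F(g) : g \in [T]_x \}$: letting $\eta_x \le \omega_1$ be least such that $F(g)(\eta_x) \ne F(g')(\eta_x)$ for some $g, g' \in [T]_x$ (and $\eta_x = \omega_1$ if there is no such coordinate), set $H(x) = F(g) \res \eta_x$ for any $g \in [T]_x$. This is well defined and lies in ${}^{\le \omega_1}2$.

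I would then verify (1)--(3). For (1), if $x <_T y$ then every branch through $y$ passes through $x$, so $[T]_y \subseteq [T]_x$, and a string that is a common initial segment of all images over the larger set $[T]_x$ is a fortiori one over $[T]_y$; hence $H(x) \subseteq H(y)$. For (2), given $s \in {}^{<\omega_1}2$, surjectivity of $F$ yields $g \in [T]$ with $s \subseteq F(g)$, and the local-determination principle then produces $x \in g$ with $s \subseteq F(h)$ for all $h \in [T]_x$; by maximality of $H(x)$ we get $s \subseteq H(x)$. For (3), given a cofinal branch $b$ and $\zeta < \omega_1$, apply the same step to $s = F(b) \res \zeta$ to obtain $x \in b$ with $s \subseteq H(x)$, whence $\zeta \le \dom(H(x))$; take $y = x$.

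The only point requiring care---the main (and mild) obstacle---is that the displayed definition misbehaves on a node $x$ lying on no cofinal branch, where $[T]_x = \emptyset$ and the common initial segment is vacuously of length $\omega_1$. The set $T'$ of nodes that do lie on some cofinal branch is downward closed, so I would define $H$ by the formula above on $T'$ and, for $y \in T \setminus T'$, set $H(y) = \bigcup \{ H(z) : z \in T', \ z <_T y \}$ (the empty union being the empty string). Since $\{ z \in T' : z <_T y \}$ is a chain on which $H$ is $\subseteq$-increasing by (1), this is well defined; a short case analysis shows that (1) still holds across $T'$ and $T \setminus T'$, while the witnesses produced for (2) and (3) automatically lie in $T'$, so those conditions are unaffected.
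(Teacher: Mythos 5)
Your proof is correct and takes essentially the same approach as the paper: there too $H(x)$ is defined as the longest common initial segment of $\{ F(b) : b \in [T], \ x \in b \}$, and properties (1)--(3) are verified exactly as you do, using surjectivity plus the basic-open reformulation of continuity. Your final paragraph handling nodes that lie on no cofinal branch (where the paper's intersection is taken over an empty family) patches a small point the paper glosses over, and your fix is valid.
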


\begin{proof}
	Fix a surjective continuous function $F : [T] \to {}^{\omega_1}2$. 
	For each $x \in T$, define 
	$$
	H(x) = \bigcap \{ F(b) : \exists b \in [T] \ (x \in b) \}.
	$$
	In other words, $H(x)$ is the largest function which is an initial segment of $F(b)$ 
	for all $b \in [T]$. 
	(1) is obvious. 
	For (2), let $s \in {}^{<\omega_1}2$. 
	Pick some $g \in {}^{\omega_1}2$ 
	such that $s \subseteq g$. 
	Since $F$ is surjective, fix $b \in [T]$ such that $F(b) = g$. 
	By the continuity of $F$, there exists some $y \in b$ 
	such that for all $c \in [T]$, if $y \in c$ 
	then $s \subseteq F(c)$. 
	Then $s \subseteq H(y)$. 
	(3) Let $b$ be a cofinal branch of $T$ and let $\zeta < \omega_1$. 
	By the continuity of $F$, fix $y \in b$ such that for all 
	$c \in [T]$, if $y \in c$ then $F(c) \res \zeta = F(b) \res \zeta$. 
	It follows that $F(b) \res \zeta \subseteq H(y)$, so $\zeta \le \dom(H(y))$.
\end{proof}
 
\begin{thm}
	The forcing $\p^*$ forces that $\dot T^*$ is not strong.
\end{thm}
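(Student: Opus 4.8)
The plan is to argue by contradiction via the preceding lemma. Suppose some $p \in \p^*$ forces that $\dot T^*$ is strong. Then that lemma provides, in the extension, a function witnessing strongness, so I may fix a $\p^*$-name $\dot H$ and strengthen $p$ so that $p \Vdash \dot H \colon \dot T^* \to {}^{\le \omega_1}2$ satisfies properties (1)--(3). I will then build in $V$, by recursion on $n < \omega$, a decreasing sequence $\langle p_n : n < \omega \rangle$ below $p$ with $\delta_{p_n} < \delta_{p_{n+1}}$, a function $s \in {}^{\delta}2$ where $\delta = \sup_n \delta_{p_n}$, and a limit condition $q$ obtained from the limit lemma for decreasing $\omega$-sequences, arranged so that $q$ forces $s \not\subseteq \dot H(x)$ for \emph{every} $x \in \dot T^*$. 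Since $\delta < \omega_1$, this contradicts property (2) instantiated at $s$.

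The naive attempt---strengthening to decide $\dot H(x) \res \delta$ for the countably many nodes $x$ of $q(0)$ and choosing $s$ to differ from each---fails because of nodes lying \emph{above} level $\delta$: property (3) guarantees only that $\dom \dot H$ is unbounded along each cofinal branch, with no bound on its rate of growth, so a node $x$ with $\delta \le \dom \dot H(x)$ may have every ancestor $x \res \delta$ at level $\delta$ satisfying $\dom \dot H < \delta$, leaving $\dot H(x) \res \delta$ undetermined at level $\delta$. Overcoming this is the crux, and the device is to use property (3) \emph{actively}, so as to arrange that every node of $\text{top}(q(0))$ already has $\dot H$-value of length at least $\delta$. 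Granting this, any $x \in \dot T^*$ of level exceeding $\delta$ has $y := x \res \delta \in \text{top}(q(0))$ with $\delta \le \dom \dot H(y)$, whence property (1) gives $\dot H(x) \res \delta = \dot H(y) \res \delta$; it then suffices to defeat the countably many top nodes together with the countably many nodes of $q(0)$ below level $\delta$.

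Concretely, at stage $n$ I perform three kinds of strengthening, dovetailed over all stages. First, using the lemma that every node of a condition can be absorbed onto a branch coordinate, I ensure that each node of $p_n(0)$ lies on some $p_{n+1}(\alpha)$ with $\alpha \ne 0$; in the limit this makes $B := \{ b_\alpha : \alpha \in \bigcup_n \dom(p_n) \}$, where $b_\alpha = \bigcup_n p_n(\alpha)$, a covering family of cofinal branches of $\bigcup_n p_n(0)$, which I take as $\text{top}(q(0))$. Second, for each branch coordinate $\alpha$ (processed cofinally often), I invoke property (3) for the cofinal branch $\dot b_\alpha$ to strengthen $p_n$ so that its branch node is pushed to a level with $\delta_n \le \dom \dot H(p_{n+1}(\alpha))$; since $p_{n+1}(\alpha) \subseteq b_\alpha$, property (1) yields $\delta \le \dom \dot H(b_\alpha)$ in the limit, giving every top node a long $\dot H$-value. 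Third, I diagonalize: for each top node and each node $z$ of $q(0)$ below level $\delta$, I reserve a fresh coordinate $\xi < \delta$ and strengthen to decide $\dot H(\cdot)(\xi)$ in the case $\xi < \dom \dot H(\cdot)$, setting $s(\xi)$ to the opposite bit in that case and arbitrarily otherwise. The limit lemma then produces $q \le p_n$ for all $n$ with $\text{top}(q(0)) = B$ and $\delta_q = \delta$.

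Finally I verify that $q$ forces $s \not\subseteq \dot H(x)$ for every $x \in \dot T^*$. As $q(0)$ is an end-extension-closed initial segment, every $x$ of level at most $\delta$ lies in $q(0)$ and every $x$ of higher level extends a unique $y \in B$. A node $z$ of $q(0)$ below level $\delta$ with $\dom \dot H(z) < \delta$ is evaded automatically, since $s$ is strictly longer than $\dot H(z)$; the remaining nodes below level $\delta$, the top nodes, and---via property (1) together with $\delta \le \dom \dot H(y)$ for $y \in B$---all nodes of level exceeding $\delta$ are evaded at their reserved disagreement coordinates. Hence $q$ forces that no $x \in \dot T^*$ satisfies $s \subseteq \dot H(x)$, contradicting property (2). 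The main obstacle is precisely the uncontrolled growth rate of $\dom \dot H$ along branches, and the decisive idea is to neutralize it by forcing $\dom \dot H$ past level $\delta$ on a covering family of branches, thereby reducing the tails of the tree to data already available at level $\delta$.
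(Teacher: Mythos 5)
Your proof is correct and takes essentially the same route as the paper's: a fusion argument built from Lemmas 2.6--2.8, in which the top level of the limit condition is covered by the branch coordinates, property (3) of Lemma 2.8 is used to push $\dom(\dot H(\cdot))$ past $\delta$ along those branches, and property (2) is then defeated by a function $s \in {}^{\delta}2$. The only (immaterial) difference is bookkeeping: you build $s$ by bit-by-bit diagonalization during the fusion so that $q$ itself forces the failure of property (2), whereas the paper decides the full traces $s_\alpha = \dot H(b_\alpha) \res \delta$, chooses $s$ afterwards by a counting argument, and reaches the contradiction via one further extension $r \le q$.
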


\begin{proof}
	Suppose for a contradiction that $p \in \p^*$ and $p$ forces that $\dot T^*$ is 
	a strong Kurepa tree. 
	Fix a $\p^*$-name $\dot H$ satisfying the properties of Lemma 2.8. 
	We define by induction a descending sequence $\langle p_n : n < \omega \rangle$ 
	of conditions in $\p^*$, an increasing sequence of ordinals 
	$\langle \delta_n : n < \omega \rangle$, and a sequence 
	$\langle s(n,\alpha) : n < \omega, \ \alpha \in \dom(p_n) \setminus \{ 0 \} \rangle$ as follows.
	
	Let $p_0 = p$ and let $\delta_0 = \delta_{p}$. 
	Now let $n < \omega$ and assume that $p_n$ and $\delta_n$ are defined. 
	For each $\alpha \in \dom(p_n) \setminus \{ 0 \}$, 
	apply property (3) of Lemma 2.8 to fix a 
	name $\dot x(n,\alpha)$ for a member of $\dot b_\alpha$ above $p_n(\alpha)$ 
	such that $\delta_n \le \dom(\dot H(\dot x(n,\alpha)))$. 	
	By a straightforward argument making use of Lemma 2.6 and the fact that 
	$\p^*$ does not add reals, we can 
	find $p_{n+1}$, $\delta_{n+1}$, and 
	$\langle s(n,\alpha) : \alpha \in \dom(p_n) \setminus \{ 0 \} \rangle$ satisfying:
	\begin{enumerate}
		\item $p_{n+1} \le p_n$;
		\item $\delta_n < \delta_{n+1} = \delta_{p_{n+1}}$;
		\item for all $x \in p_n(0)$ there exists some $\beta \in \dom(p_{n+1})$ 
		such that $x \subseteq p_{n+1}(\beta)$;
		\item for all $\alpha \in \dom(p_n) \setminus \{ 0 \}$, 
		$p_{n+1}$ forces that $\dot x(n,\alpha) \subseteq p_{n+1}(\alpha)$ and 
		$\dot H(\dot x(n,\alpha)) \res \delta_n = s(n,\alpha)$.
	\end{enumerate}

	This completes the induction. 
	Let $t' = \bigcup \{ p_n(0) : n < \omega \}$, 
	$\delta = \sup \{ \delta_n : n < \omega \}$, 
	and $X = \bigcup \{ \dom(p_n) : n < \omega \}$. 
	For each $\alpha \in X \setminus \{ 0 \}$, define 
	$b_\alpha = \bigcup \{ p_n(\alpha) : n < \omega \}$ 
	and $s_\alpha = \bigcup \{ s(n,\alpha) : n < \omega, \ \alpha \in \dom(p_n) \}$. 
	By property (3), every member of $t'$ is contained in $b_\alpha$ for some 
	$\alpha \in X \setminus \{ 0 \}$. 
	By property (4), $s_\alpha \in {}^{\delta}2$. 
	Define $B = \{ b_\alpha : \alpha \in X \setminus \{ 0 \} \}$ and 
	$t = t' \cup B$. 
	
	Let $q$ be the function with domain $X$ such that $q(0) = t$ and for all 
	$\alpha \in X \setminus \{ 0 \}$, $q(\alpha) = b_\alpha$. 
	By Lemma 2.7, $q \in \p^*$, $\delta_q = \delta$, $\text{top}(q(0)) = B$, 
	and $q \le p_n$ for all $n < \omega$. 
	By property (3) above and property (1) of Lemma 2.8, 
	$q$ forces that for all $\alpha \in X \setminus \{ 0 \}$, 
	$s_\alpha \subseteq \dot H(b_\alpha)$.

	Since $B$ is countable and ${}^{\delta}2$ is uncountable, fix some 
	$s \in {}^{\delta}2$ such that for all $\alpha \in X \setminus \{ 0 \}$, 
	$s \ne s_\alpha$. 
	By properties (1) and (2) of Lemma 2.8, we can fix 
	$r \le q$ and $x \in r(0)$ such that 
	$\dom(x) \ge \delta$ and $r$ forces that 
	$s \subseteq \dot H(x)$. 
	Since $r(0)$ end-extends $q(0)$ and $\text{top}(q(0)) = B$, 
	there exists some $\alpha \in X \setminus \{ 0 \}$ such that $x \res \delta = b_\alpha$. 
	Then $r$ forces that $s_\alpha \subseteq \dot H(b_\alpha) \subseteq \dot H(x)$. 
	So $r$ forces that both $s_\alpha$ and $s$ are subsets of the function $\dot H(x)$. 
	But $s_\alpha$ and $s$ both have domain $\delta$, so they must be equal, 
	which contradicts the choice of $s$.
\end{proof}

Next we show that any strong Kurepa tree contains an Aronszajn subtree. 
This actually gives a second (less direct) proof of Theorem 2.9, since $\p^*$ 
forces that $\dot T^*$ does not contain an Aronszajn subtree.

\begin{proposition}
	Suppose that $T$ is an $\omega_1$-tree and there exists a function 
	$H : T \to {}^{\le \omega_1}2$ satisfying:
	\begin{enumerate}
		\item $x <_T y$ implies $H(x) \subseteq H(y)$;
		\item for all $s \in {}^{<\omega_1}2$, there exists some $x \in T$ 
		such that $s \subseteq H(x)$;
		\item for any cofinal branch $b$ of $T$ and 
		for all $\zeta < \omega_1$, there exists 
		some $y \in b$ such that $\zeta \le \dom(H(y))$.
	\end{enumerate}
	Then $T$ contains an Aronszajn subtree.
\end{proposition}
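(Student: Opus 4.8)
The plan is to reduce the statement to producing a single tree embedding of a fixed special Aronszajn tree and then to build that embedding by recursion, using monotonicity to control limits and properties (2)--(3) to feed the recursion. For the reduction, first note that a \emph{special} tree (a countable union of antichains) of height $\omega_1$ with countable levels is automatically Aronszajn, since a cofinal branch would be an uncountable chain meeting some antichain twice. In ZFC there is a special Aronszajn tree $U$ which is a downward closed subtree of ${}^{<\omega_1}2$: take any special Aronszajn tree, code its levels into $\omega$, and then, via a prefix-free coding of $\omega$ by finite binary strings, code it into a downward closed subtree of ${}^{<\omega_1}2$. It then suffices to construct $\phi : U \to T$ satisfying (E1) $u \subsetneq v \Rightarrow \phi(u) <_T \phi(v)$, and (E2) $u \subseteq H(\phi(u))$ for all $u \in U$. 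Indeed, if $u,v \in U$ are incomparable then, $U$ being a subtree of ${}^{<\omega_1}2$, the strings $u$ and $v$ are incompatible, so by (E2) and property (1) their images have incompatible $H$-values and hence are $<_T$-incomparable. Thus $\phi$ preserves and reflects both $<$ and incomparability, $\ran\phi$ with the induced order is a copy of $U$, and $T$ contains an Aronszajn subtree. (If $T$ has no cofinal branch it is already Aronszajn, so I may assume $T$ is pruned: every node lies on a cofinal branch.)

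I would then define $\phi$ by recursion on the levels of $U$, maintaining throughout a \emph{richness invariant} asserting that $\phi(u)$ lies on a cofinal branch of $T$ whose $H$-value extends $u$ and along which two incompatible continuations remain available. At the root, set $\phi(\emptyset)$ to be the root of $T$. At a successor $v = u^\frown i \in U$ the invariant is what lets me find a node above $\phi(u)$ whose $H$-value realizes $u^\frown i$. At a limit $u \in {}^\alpha 2 \cap U$, the predecessors $\phi(u\res\beta)$ form a strictly increasing chain which the invariant keeps inside a fixed cofinal branch $c$ with $u \subseteq H(c)$; property (3) then furnishes $y \in c$ with $\dom H(y) \ge \alpha$, whence $u \subseteq H(y)$ by property (1), and I set $\phi(u) = y$. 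Here monotonicity (property (1)) is exactly what prevents ``recombination'' at limits: once two branches of the construction carry incompatible $H$-values, all of their upper bounds stay $<_T$-incomparable, so the copy cannot collapse onto a single branch.

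The essential obstacle is that property (2) is \emph{global} --- it realizes each binary string somewhere in $T$, not necessarily above a prescribed node --- and the Cantor--Bendixson heuristic fails at $\omega_1$: an uncountable set of cofinal branches need not split, since a single branch can shed countably many branches at each of $\omega_1$ levels. So the real content is to establish and propagate the richness invariant all the way to height $\omega_1$. I expect to secure this with a continuous $\in$-chain of countable elementary submodels $\langle M_\xi : \xi < \omega_1\rangle$ containing $T$ and $H$, performing the splittings only at the levels $\delta_\xi = M_\xi \cap \omega_1$. The point is that property (2) is equivalent to the set of branch-values $\{H(b) : b \in [T]\}$ being dense in ${}^{\omega_1}2$; reflecting this density into $M_\xi$ should show that at club-many $\delta_\xi$ genuinely new, $H$-incompatible extensions of the current node become available, while property (3) threads the chains through these levels so that the limit step always succeeds. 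Specialness of $U$ then delivers an Aronszajn subtree of $T$ with no separate branch-killing argument needed.
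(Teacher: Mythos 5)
Your proof reduces the proposition to a strictly stronger statement --- the existence of a map $\phi : U \to T$ satisfying (E1) and (E2) on an uncountable chain-free $U$ --- and that stronger statement does not follow from hypotheses (1)--(3); it can genuinely fail, so no amount of elementary-submodel bookkeeping can complete your construction. First note that (1)+(2) already imply \textsf{CH} (distinct elements of ${}^{\omega}2$ are incompatible, so by (1) their witnesses under (2) are distinct nodes of $T$), so assume \textsf{CH} and enumerate ${}^{<\omega_1}2$ as $\{ s_\gamma : \gamma < \omega_1 \}$. Let $A$ be a special Aronszajn tree with an uncountable antichain $\{ a_\gamma : \gamma < \omega_1 \}$, and let $T$ be the $\omega_1$-tree consisting of a spine $\{ c_\alpha : \alpha < \omega_1 \}$ of order type $\omega_1$ together with, above each $c_\beta$, a copy $A_\beta$ of $A$ whose nodes are incomparable with every $c_\alpha$ for $\alpha > \beta$. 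Define $H(c_\alpha) = 0^{(\alpha)}$ (the all-zeros string of length $\alpha$); for $x \in A_\beta$ let $H(x) = 0^{(\beta)} {}^\frown 1 ^\frown s_\gamma$ if $x$ lies above (the copy of) $a_\gamma$, and $H(x) = 0^{(\beta)} {}^\frown 1$ otherwise. Property (1) holds because the $a_\gamma$ are pairwise incomparable; (2) holds because every $s$ is either all zeros (realized on the spine) or of the form $0^{(\beta)} {}^\frown 1 ^\frown s_\gamma$ (realized at the copy of $a_\gamma$ in $A_\beta$); and (3) holds because the unique cofinal branch of $T$ is the spine, along which $\dom(H(c_\alpha)) = \alpha$ is unbounded. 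Now suppose $\phi$ satisfies (E1)--(E2) and $v \in U$ contains a $1$ and splits in $U$. Then $\phi(v)$ lies in some $A_\beta$, so $\phi(v^\frown 0)$ and $\phi(v^\frown 1)$ also lie in $A_\beta$, and their $H$-values are incompatible extensions of $0^{(\beta)} {}^\frown 1$; but the only nodes of $A_\beta$ whose $H$-values properly extend $0^{(\beta)} {}^\frown 1$ are those above antichain elements, above which $H$ is \emph{constant}. Hence no node of $U$ above $v^\frown 0$ or $v^\frown 1$ can split. So $\phi$ cannot exist on any $U$ having two nested splitting nodes above a node containing a $1$, and every uncountable $U$ without uncountable chains has such a configuration (its all-zeros part is a chain, hence bounded). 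The same example breaks your preliminary step ``I may assume $T$ is pruned'': pruning leaves only the spine, for which (2) fails, so the hypotheses do not pass to the pruned tree. The underlying point is one you half-identified yourself: the witnesses to the density in (2) may live entirely inside branchless (Aronszajn) parts of $T$, where (3) is vacuous, and reflection into countable models cannot manufacture splitting that simply is not there.

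The paper's proof runs in exactly the opposite direction, and the example above shows why it must: rather than hunting for a subtree on which the $H$-values keep splitting, it proves there is a node $x$ with $H(x) \in {}^{<\omega_1}2$ such that \emph{every} level of $T$ above $x$ contains some $y >_T x$ with $H(y) = H(x)$. (If no such $x$ existed, one fixes a club $C$ past whose points $H$-values of nodes must strictly grow and a $\theta \in C$ with $\ot(C \cap \theta) = \theta$; then every node of $T_\theta$ has $H$-value of length at least $\theta$, and since $T \res (\theta+1)$ is countable, a counting argument in ${}^{\theta}2$ produces an $s$ contradicting (2).) The set $\{ y : y \le_T x \} \cup \{ y : x \le_T y, \ H(y) = H(x) \}$ is then a downward closed uncountable subtree of $T$, and it contains no cofinal branch precisely because of (3), since $H$ is bounded on it. In short, the Aronszajn subtree is found where $H$ is eventually constant, not where it splits; your invariant chases the wrong configuration, and any correct repair of your strategy would have to begin with this dichotomy, at which point the paper's short argument already finishes the proof.
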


\begin{proof}
	We begin with the following claim.
	
	\textbf{Claim:} There exists some $x \in T$ such that $H(x) \in {}^{< \omega_1}2$ 
	and for all $\h_T(x) < \zeta < \omega_1$, 
	there is some $y \in T_\zeta$ such that $x <_T y$ and $H(x) = H(y)$.
	
	\emph{Proof:} Suppose for a contradiction that for all $x \in T$, if 
	$H(x) \in {}^{< \omega_1}2$ then there exists some $\h_T(x) < \zeta_x < \omega_1$ 
	such that for all $y \in T_{\zeta_{x}}$, if $x <_T y$ then 
	$H(x) \subsetneq H(y)$. 
	Since the levels of $T$ are countable, we can find a club $C \subseteq \omega_1$ 
	such that for all $\delta \in C$, if $x \in T \res \delta$ and $H(x) \in 2^{<\omega_1}$, 
	then $\zeta_x < \delta$. 
	Fix some $\theta \in C$ such that $\ot(C \cap \theta) = \theta$. 
	Now consider any $x \in T_\theta$. 
	We prove that $\dom(H(x)) \ge \theta$. 
	First, assume that $H(x) \in 2^{<\omega_1}$. 
	Then $\{ \dom(H(x \res \gamma)) : \gamma \in C \cap \theta \}$ is strictly increasing 
	and hence has order type $\ot(C \cap \theta) = \theta$. 
	It follows that $\theta \le \dom(H(x))$.
	Secondly, assume that $H(x) \in 2^{\omega_1}$. 
	Then $\dom(H(x)) = \omega_1 > \theta$.
	 
	Define 
	$$
	Z = \{ H(x) \res \theta : x \in T_\theta \} \cup 
	\{ H(y) \res \theta : y \in T \res \theta, \ \dom(H(y)) \ge \theta \}.
	$$
	By the previous paragraph, $Z \subseteq {}^{\theta}2$. 
	As $Z$ is countable and ${}^{\theta}2$ is uncountable, 
	we can find some $s \in {}^{\theta}2$ which is not in $Z$. 
	By property (2), fix $y \in T$ such that $s \subseteq H(y)$. 
	In particular, $\dom(H(y)) \ge \theta$. 
	By the definition of $Z$ and the fact 
	that $s \notin Z$, $y \notin T \res \theta$. 
	So $\h_T(y) \ge \theta$. 
	Then $H(y \res \theta) \res \theta$ is in $Z$ and is a subset of $H(y)$, and therefore 
	is $\subseteq$-comparable with $s$. 
	But both $H(y \res \theta) \res \theta$ and $s$ have domain $\theta$, so they are equal, 
	contradicting that $s \notin Z$. 
	This completes the proof of the claim.
	
	Now fix $x$ as in the claim. 
	Let $U$ be the set of $y \in T$ such that either $y \le x$, or 
	$x \le y$ and $H(y) = H(x)$. 
	By property (1), $U$ is downwards closed, and $U$ is uncountable by the choice of $x$. 
	Now every $y \in U$ satisfies that $\dom(H(y)) \le \dom(H(x))$. 
	So by property (3), $U$ does not contain a cofinal branch.
\end{proof}

\begin{corollary}
	Any strong Kurepa tree contains an Aronszjan subtree.
\end{corollary}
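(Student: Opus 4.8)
The plan is to combine the two preceding results, which were arranged precisely so that this corollary falls out immediately. First I would observe that a strong Kurepa tree $T$ is, by definition, a Kurepa tree, and hence in particular an $\omega_1$-tree. This verifies the first hypothesis of Proposition 2.10, namely that $T$ is an $\omega_1$-tree.

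Next I would apply Lemma 2.8 to $T$. Since $T$ is strong, Lemma 2.8 produces a function $H : T \to {}^{\le \omega_1}2$ satisfying conditions (1), (2), and (3). These are exactly the three properties that Proposition 2.10 demands of a function $H : T \to {}^{\le \omega_1}2$. Thus both hypotheses of Proposition 2.10 are met by the pair $(T, H)$.

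Finally, I would invoke Proposition 2.10 directly to conclude that $T$ contains an Aronszajn subtree, completing the argument. There is no genuine obstacle at this stage: all of the substantive work has already been carried out in the two prior results. Lemma 2.8 supplies the function $H$ by taking $H(x)$ to be the longest common initial segment of $F(b)$ over all branches $b$ through $x$, using surjectivity of $F$ for property (2) and continuity of $F$ for property (3). Proposition 2.10 then performs the real combinatorics: a club-guided diagonalization locates a node $x$ whose $H$-value stabilizes along uncountably many extensions, and the downward closure of the set of such extensions gives an uncountable, downward-closed subtree whose members all have $H$-value of bounded domain, so property (3) forbids a cofinal branch. The corollary merely chains these two facts together.
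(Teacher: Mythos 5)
Your proposal is correct and is exactly the paper's argument: the paper proves this corollary simply by citing Lemma 2.8 and Proposition 2.10, just as you do. Your summary of the internal workings of those two results is also accurate, though not needed for the corollary itself.
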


\begin{proof}
	By Lemma 2.8 and Proposition 2.10.
\end{proof}

\section{The Main Ideas}

We now turn towards proving the consistency of the existence of a strong Kurepa tree. 
In this section, we describe the basic ideas on which the 
consistency proof is based. 
In particular, we use a function $H$ similar to that described in Lemma 2.8 but 
with stronger properties.

\begin{definition}
	Define a relation $\mathcal R$ by letting $\mathcal R(T,H)$ hold if:
	\begin{enumerate}
		\item $T$ is a normal binary $\omega_1$-tree 
		which is a downwards closed subtree of ${}^{<\omega_1}2$;
		\item $H : T \to {}^{<\omega_1}2$ is a function satisfying:
		\begin{enumerate}
			\item $H(\emptyset) = \emptyset$;
			\item $x <_T y$ implies $H(x) \subseteq H(y)$;
			\item whenever $z \in T$ has limit height, then 
			$H(z) = \bigcup \{ H(y) : y <_T z \}$;
		\end{enumerate}
		\item for all $x \in T$, for all $\zeta < \omega_1$, and 
		for all $s \in {}^{<\omega_1}2$, 
		if $H(x) \subseteq s$, then there exists some $z \in T$ such that 
		$x \le_T z$, $\dom(z) \ge \zeta$, and $H(z) = s$.
	\end{enumerate}
\end{definition}

The following lemma is immediate.

\begin{lemma}
	Assume that $\mathcal R(T,H)$ holds. 
	Then any forcing poset which does not add reals 
	forces that $\mathcal R(T,H)$ holds.
\end{lemma}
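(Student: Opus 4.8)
The plan is to argue that a forcing which adds no reals leaves all of the objects appearing in the definition of $\mathcal{R}(T,H)$ unchanged, so that each of its clauses, being a statement whose quantifiers range only over these fixed objects, is simply absolute between $V$ and the extension.

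First I would record the standard fact that a forcing poset which adds no reals preserves $\omega_1$: collapsing $\omega_1$ to $\omega$ would produce a well-ordering of $\omega$ of order type $\omega_1^V$, and such a well-ordering is coded by a new real. Consequently, in any generic extension $V[G]$ by such a forcing we have $\omega_1^{V[G]} = \omega_1^V$, and moreover no new element is added to ${}^{<\omega_1}2$, since each member of ${}^{<\omega_1}2$ is (coded by) a real. Thus ${}^{<\omega_1}2$ is computed identically in $V$ and $V[G]$, and in particular the downwards closed subtree $T \subseteq {}^{<\omega_1}2$, the function $H : T \to {}^{<\omega_1}2$, and the tree order $<_T$ (which is just $\subsetneq$) are literally the same objects in both models.

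Next I would check the three clauses one at a time. For clause (1): each level $T_\alpha$ is the same countable set in $V[G]$ as in $V$, the height of $T$ is still $\omega_1$ since $\omega_1$ is preserved, and normality and binariness are witnessed by nodes living in the unchanged $T$. For clause (2), properties (a)--(c) are assertions about the fixed function $H$ and the fixed order $<_T$; note for (c) that a node $z \in T$ has limit height in $V[G]$ iff it does in $V$ and that $\bigcup \{ H(y) : y <_T z \}$ is computed identically, so each clause holds in $V[G]$ precisely because it holds in $V$. For clause (3), the assertion has the form: for every $x \in T$, every $\zeta < \omega_1$, and every $s \in {}^{<\omega_1}2$ with $H(x) \subseteq s$, there exists a witness $z \in T$ with $x \le_T z$, $\dom(z) \ge \zeta$, and $H(z) = s$; since the ranges of all three universal quantifiers are unchanged, and for each such triple the witness $z$ provided in $V$ still lies in $T$ in $V[G]$, clause (3) persists.

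There is no genuine obstacle here; the only point requiring any care is that clause (3) quantifies universally over $s \in {}^{<\omega_1}2$, so its preservation really does rely on the hypothesis that no new reals, and hence no new such $s$, are added. Once the no-new-reals observation is in place, the remainder is a routine absoluteness check, which is why the lemma is immediate.
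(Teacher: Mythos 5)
Your proof is correct, and it matches the paper's treatment: the paper offers no proof at all (it labels the lemma ``immediate''), and your argument is precisely the routine absoluteness check that is intended — no new reals gives preservation of $\omega_1$ and of ${}^{<\omega_1}2$, after which every clause of $\mathcal R(T,H)$ quantifies only over unchanged sets. Your observation that clause (3) is the one place where the no-new-reals hypothesis is genuinely used (to keep the range of the quantifier over $s$ fixed) is exactly the right point of care.
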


\begin{definition}
	Define a relation $\mathcal R^*$ by letting $\mathcal R^*(T,H,B,F)$ hold if:
	\begin{enumerate}
		\item $\mathcal R(T,H)$ holds;
		\item $B \subseteq [T]$;
		\item $F : B \to {}^{\omega_1}2$ is a function satisfying that for all $b \in B$, 
		$$
		F(b) = \bigcup \{ H(b \res \gamma) : \gamma < \omega_1 \}.
		$$
	\end{enumerate}
\end{definition}

\begin{lemma}
	Suppose that $\mathcal R^*(T,H,B,F)$ holds. 
	Then $F : B \to {}^{\omega_1}2$ is continuous.
\end{lemma}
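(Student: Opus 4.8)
The plan is to verify continuity directly from the definition by showing that the $F$-preimage of every basic open subset of ${}^{\omega_1}2$ is relatively open in $B$. Identifying ${}^{\omega_1}2$ with the set of cofinal branches of the full binary tree ${}^{<\omega_1}2$, a basic open set in its cone topology has the form $N_s = \{ g \in {}^{\omega_1}2 : s \subseteq g \}$ for some $s \in {}^{<\omega_1}2$. So it suffices to fix such an $s$ and show that $F^{-1}(N_s) = \{ b \in B : s \subseteq F(b) \}$ is open in the subspace topology on $B$.

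First I would fix an arbitrary $b \in F^{-1}(N_s)$, so that $s \subseteq F(b)$, and produce a basic open neighborhood of $b$ contained in $F^{-1}(N_s)$. The crux is that, by clause (3) of $\mathcal R^*$, $F(b) = \bigcup \{ H(b \res \gamma) : \gamma < \omega_1 \}$ is the union of the $\subseteq$-increasing chain of functions $H(b \res \gamma)$ (increasing because $x <_T y$ implies $H(x) \subseteq H(y)$). Since $F(b) \in {}^{\omega_1}2$ has domain $\omega_1$ while $\dom(s) < \omega_1$, the domains $\dom(H(b \res \gamma))$ are cofinal in $\omega_1$, so I can choose $\gamma < \omega_1$ with $\dom(s) \le \dom(H(b \res \gamma))$; then $s = F(b) \res \dom(s) = H(b \res \gamma) \res \dom(s) \subseteq H(b \res \gamma)$.

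Next I would set $x = b \res \gamma \in T$ and consider the basic open set $\{ c \in B : x \in c \}$, which contains $b$. For any $c$ in this set, $x$ is the node of $c$ at level $\gamma$, so $c \res \gamma = x$, and hence $H(x) \subseteq F(c)$ by clause (3) of $\mathcal R^*$ applied to $c$; since $s \subseteq H(x)$, this yields $s \subseteq F(c)$, i.e.\ $c \in F^{-1}(N_s)$. Thus $\{ c \in B : x \in c \}$ is the desired neighborhood, and $F^{-1}(N_s)$ is open, completing the argument.

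The main (and essentially only) obstacle is the cofinality step producing $\gamma$: this is where the specific shape of $F$ guaranteed by $\mathcal R^*$ is used, together with the mismatch between the countable length of $s$ and the length $\omega_1$ of $F(b)$. Everything else is bookkeeping about the cone topology and the identification of a branch $c$ with the function $\bigcup c$; I do not expect to need any deeper structural property of $T$ or $H$ (such as normality or clause (3) of $\mathcal R$) for this lemma.
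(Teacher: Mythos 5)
Your proof is correct and follows essentially the same argument as the paper: both use the representation $F(b) = \bigcup \{ H(b \res \gamma) : \gamma < \omega_1 \}$ to find a level $\gamma$ at which $H(b \res \gamma)$ already covers the relevant countable initial segment, and then observe that the cone $\{ c \in B : c \res \gamma = b \res \gamma \}$ maps into the given open set since $H(c \res \gamma) = H(b \res \gamma) \subseteq F(c)$. The only difference is presentational (you verify openness of preimages of basic open sets, while the paper verifies continuity at each point), which amounts to the same thing.
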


\begin{proof}
	Let $b \in B$ and we show that $F$ is continuous at $b$. 
	So consider an open set $O \subseteq {}^{\omega_1}2$ such that $F(b) \in O$. 
	By the definition of the cone topology, find some $\beta < \omega_1$ 
	such that for all $h \in {}^{\omega_1}2$, if $h \res \beta = F(b) \res \beta$, 
	then $h \in O$. 
	Since $F(b) = \bigcup \{ H(b \res \gamma) : \gamma < \omega_1 \}$, there exists 
	some $\gamma < \omega_1$ such that $F(b) \res \beta \subseteq H(b \res \gamma)$. 

	Let $U$ be the open subset of $B$ in the cone topology consisting of all 
	$c \in B$ such that $c \res \gamma = b \res \gamma$. 
	Obviously, $b \in U$. 
	We claim that $F[U] \subseteq O$. 
	So let $c \in U$ and we show that $F(c) \in O$. 
	Then $c \res \gamma = b \res \gamma$. 
	So 
	$$
	F(b) \res \beta \subseteq H(b \res \gamma) = H(c \res \gamma) \subseteq F(c),
	$$
	and hence $F(c) \res \beta = F(b) \res \beta$. 
	By the choice of $\beta$, $F(c) \in O$.
\end{proof}

\begin{corollary}
	Assume that $\mathcal R^*(T,H,[T],F)$ holds and the range of $F$ is equal to ${}^{\omega_1}2$. 
	Then ${}^{\omega_1}2$ is a continuous image of $[T]$.
\end{corollary}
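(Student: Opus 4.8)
The plan is to recognize this statement as an immediate consequence of Lemma 3.4 together with the surjectivity hypothesis. First I would observe that the hypothesis $\mathcal R^*(T,H,[T],F)$ is exactly the hypothesis of Lemma 3.4 under the instantiation $B = [T]$; since that lemma is stated for an arbitrary $B \subseteq [T]$, this special case is covered, and applying it directly yields that $F : [T] \to {}^{\omega_1}2$ is continuous. Second, the assumption that the range of $F$ equals ${}^{\omega_1}2$ says precisely that $F$ is surjective. Combining these, $F$ is a continuous surjection from $[T]$ onto ${}^{\omega_1}2$, which is by definition a witness that ${}^{\omega_1}2$ is a continuous image of $[T]$.

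There is no genuine obstacle here: the corollary merely packages Lemma 3.4 with the surjectivity assumption, and no new computation is required. The only point worth a moment's attention is bookkeeping---confirming that the notion of ``continuous image'' in the conclusion means the image under a continuous surjection (rather than, say, a homeomorphism onto its image), so that the continuous surjection $F$ we have produced is precisely the required witness. With that understood, the proof reduces to a one-line appeal to Lemma 3.4.
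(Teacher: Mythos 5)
Your proposal is correct and matches the paper's intended argument exactly: the corollary is stated there without proof precisely because it follows from Lemma 3.4 applied with $B = [T]$, combined with the surjectivity hypothesis. Nothing further is needed.
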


In Section 5, we prove that assuming \textsf{CH} there exists a 
$\sigma$-closed, $\omega_2$-c.c.\ forcing which adds a Kurepa tree $T$ and 
functions $H$ and $F$ such that $\mathcal R^*(T,H,[T],F)$ holds, and the range 
of $F$ equals ${}^{\omega_1}2$. 
By Corollary 3.5, this forcing shows the consistency of the existence of a strong Kurepa tree.

\section{The First Step}

Unlike the standard forcing for adding a Kurepa tree discussed in the previous section 
which is a product forcing, 
our construction of a strong Kurepa tree involves a forcing iteration. 
The reason we need to use a forcing iteration instead of a product forcing  
is because all functions in ${}^{\omega_1}2$ in the generic extension need to be handled 
in order for the continuous function witnessing strongness to be surjective. 
In this section, we describe the first step of this forcing iteration. 
This step adds an $\omega_1$-tree $T$ and a function $H : T \to {}^{<\omega_1}2$ 
such that $\mathcal R(T,H)$ holds. 

\begin{definition}
	Let $\p$ be the forcing poset consisting of conditions which are pairs 
	$(t,h)$ satisfying:
	\begin{enumerate}
		\item $t$ is a standard binary countable tree;
		\item $h : t \to {}^{<\omega_1}2$ is a function such that:
		\begin{enumerate}
			\item $h(\emptyset) = \emptyset$;
			\item for all $x, y \in t$, if $x \subseteq y$ then 
			$h(x) \subseteq h(y)$;
			\item whenever $z \in t$ has limit height, 
			then $h(z) = \bigcup \{ h(y) : y \in t, \ y \subseteq z \}$;
		\end{enumerate}
		\item for all $x \in t$, 
		there exists some $y \in \text{top}(t)$ such that $h(x) = h(y)$.
	\end{enumerate}
	The ordering on $\p$ is given by letting $(u,k) \le (t,h)$ if:
	\begin{enumerate}
		\item[(4)] $u$ end-extends $t$;
		\item[(5)] $h \subseteq k$.
	\end{enumerate}
\end{definition}

For any $p \in \p$, we write $p$ as $(t_p,h_p)$ 
and $\delta_{t_p}$ as $\delta_p$. 
Observe that if $q \le p$ and $q \ne p$, then $\delta_p < \delta_q$.

Note that $\p$ has size $2^\omega$, and hence is $(2^\omega)^+$-c.c.

\begin{lemma}
	Let $p \in \p$. 
	Then there exists $q \le p$ such that $\delta_q = \delta_p + 1$ and 
	for all $x \in t_q \cap {}^{< \delta_q}2$, 
	there exist distinct $z_0$ and $z_1$ in $\text{top}(t_q)$ such that 
	$y \subseteq z_0$, $y \subseteq z_1$, and $h_q(z_0) = h_q(z_1) = h_q(x)$.
\end{lemma}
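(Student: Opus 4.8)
The plan is to let $q$ be the unique one-level end-extension of $p$ in which every node of $\text{top}(t_p)$ splits into two immediate successors carrying the same $H$-value as that node. Explicitly, I would set
$$
t_q = t_p \cup \{\, w^\frown i : w \in \text{top}(t_p),\ i \in \{0,1\} \,\},
$$
so that $\delta_q = \delta_p + 1$ and $\text{top}(t_q) = \{\, w^\frown i : w \in \text{top}(t_p),\ i \in \{0,1\}\,\}$, and I would define $h_q$ by $h_q \res t_p = h_p$ together with $h_q(w^\frown i) = h_p(w)$ for all $w \in \text{top}(t_p)$ and $i \in \{0,1\}$. This choice of $t_q$ is in fact forced on us: since $q$ end-extends $p$ and $\delta_q = \delta_p + 1$ is a successor, the clause requiring each non-top node to split adds precisely the two immediate successors of each node of $\text{top}(t_p)$ and nothing else.

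First I would verify that $q \in \p$. That $t_q$ is a standard binary countable tree is routine; the only new point is that the former top nodes of $t_p$ now split, which is exactly how $t_q$ was defined. For $h_q$, condition (2a) is inherited from $h_p$; condition (2b) holds because each new value $h_q(w^\frown i) = h_p(w)$ dominates $h_p(y)$ for every $y \subseteq w$ by monotonicity of $h_p$ and agrees with the value at its predecessor $w$; and condition (2c) is vacuous for the new nodes, all of which have the successor height $\delta_p + 1$. For condition (3) I would check that every node of $t_q$ has a value-realizing node in $\text{top}(t_q)$: a new top node realizes its own value there, while for $x \in t_p$ condition (3) for $p$ supplies a top node $w$ of $t_p$ with $h_p(w) = h_p(x)$, whence $w^\frown 0 \in \text{top}(t_q)$ has $h_q(w^\frown 0) = h_p(w) = h_p(x)$. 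Finally $q \le p$ is immediate, since $t_q$ end-extends $t_p$ by construction and $h_p \subseteq h_q$.

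To establish the displayed splitting property I would fix $x \in t_q \cap {}^{<\delta_q}2 = t_p$ and apply condition (3) in the definition of $\p$ to obtain a top node $w$ of $t_p$ lying above $x$ with $h_p(w) = h_p(x)$. Taking $z_0 = w^\frown 0$ and $z_1 = w^\frown 1$, these are distinct members of $\text{top}(t_q)$; both extend $x$ because $x \subseteq w \subseteq w^\frown i$; and $h_q(z_0) = h_q(z_1) = h_p(w) = h_p(x) = h_q(x)$, exactly as required.

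The single load-bearing step — and the one I would treat with the most care — is this appeal to condition (3): what is needed is not merely some top node of $t_p$ sharing the value $h_p(x)$, but one lying \emph{above} $x$. This above-ness is precisely what lets the chosen $z_0, z_1$ extend $x$, rather than merely share an arbitrary top predecessor, and it is what propagates the ``splits while preserving $H$'' behaviour from the top level of $t_p$ down to every node of $t_p$. Everything else is the routine bookkeeping of a value-preserving one-level extension.
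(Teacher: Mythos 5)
Your construction is exactly the paper's: the paper defines $t_q = t_p \cup \{ z^\frown i : z \in \text{top}(t_p), \ i < 2 \}$ with $h_q \res t_p = h_p$ and $h_q(z^\frown i) = h_p(z)$, and leaves the verification as routine, which you carry out correctly (also silently fixing the statement's stray ``$y$'', which should read $x$). Your load-bearing reading of condition (3) of Definition 4.1 as supplying a top node \emph{above} $x$ with the same $h$-value is indeed the intended one even though that clause as printed omits ``$x \subseteq y$'' --- this is exactly how the paper itself uses it, e.g.\ in the proof of Lemma 4.9, where it fixes $y \in \text{top}(t)$ with $x \subseteq y$ and $h(x) = h(y)$.
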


\begin{proof}
	Define 
	$t_q = t_p \cup \{ z^\frown i : z \in \text{top}(t_p), \ i < 2 \}$. 
	Define $h_q : t_q \to {}^{<\omega_1}2$ by letting 
	$h_q \res t_p = h_p$ and $h_q(z^\frown i) = h_p(z)$ 
	for all $z \in \text{top}(t_p)$ 
	and $i < 2$. 
	It is easy to check that $q = (t_q,h_q)$ is as required.
\end{proof}	

\begin{lemma}
	Suppose that $\langle (t_n,h_n) : n < \omega \rangle$ is a decreasing 
	sequence of distinct conditions in $\p$. 
	Let $t' = \bigcup \{ t_n : n < \omega \}$ and 
	$h' = \bigcup \{ h_n : n < \omega \}$. 
	Then for all $x \in t'$, there exists a cofinal branch $b$ of $t'$ 
	such that $x \subseteq b$ and 
	for all $y \in t'$ with $x \subseteq y \subseteq b$, 
	$h'(y) = h'(x)$.
\end{lemma}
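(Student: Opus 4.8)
Throughout write $\delta = \sup \{ \delta_{t_n} : n < \omega \}$ and $v = h'(x)$. The plan is to produce the branch $b$ by a recursion that climbs through genuine nodes of $t'$ while keeping the $h'$-value pinned at $v$. First I would record the basic shape of $t'$. Since the $(t_n,h_n)$ are distinct and decreasing, $\delta_{t_n} < \delta_{t_{n+1}}$ for all $n$, so $\delta$ is a limit ordinal of cofinality $\omega$, every element of $t'$ has domain $< \delta$, and a cofinal branch of $t'$ is exactly a $b \in {}^{\delta}2$ with $b \res \gamma \in t'$ for all $\gamma < \delta$; by downward closure of $t'$ it even suffices that $b \res \gamma \in t'$ for cofinally many $\gamma < \delta$. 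Put
$$
S = \{ w \in t' : x \subseteq w \text{ and } h'(w) = v \}.
$$
By monotonicity (property (2)(b) of Definition 4.1), $S$ is convex above $x$: if $w \in S$ and $x \subseteq w' \subseteq w$, then $v = h'(x) \subseteq h'(w') \subseteq h'(w) = v$, so $w' \in S$. Thus proving the lemma reduces to finding a cofinal branch of $t'$ all of whose nodes above $x$ lie in $S$.

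The crux is to show that $S$ is cofinal in $t'$, i.e. that for each $n$ with $x \in t_n$ there is some $w \in S$ with $\dom(w) \ge \delta_{t_n}$. Here I would work inside the single condition $(t_n,h_n)$ and invoke property (3) of Definition 4.1, which asserts that the value $v = h_n(x)$ is attained at a top node of $t_n$; the real content of the step is to obtain such a top node \emph{extending} $x$, that is, a $z \in \text{top}(t_n)$ with $x \subseteq z$ and $h_n(z) = v$, so that $z \in S$ and $\dom(z) = \delta_{t_n}$. This is precisely the kind of same-value splitting produced by Lemma 4.2, and it is the step I expect to be the main obstacle, since property (3) on its own only locates a same-value node somewhere at the top of $t_n$ and not necessarily above $x$. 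Granting it, $S$ meets level $\delta_{t_n}$ for every $n$, so $S$ is cofinal.

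With cofinality of $S$ in hand, I would extract the branch by an $\omega$-recursion. I choose nodes $x = w_0 \subseteq w_1 \subseteq \cdots$ in $S$ so that each $w_{k+1}$ is a top node of some $t_{n_k}$, as produced above but applied \emph{above} $w_k$ rather than above $x$, arranging $\dom(w_k) \to \delta$. Set $b = \bigcup_k w_k \in {}^{\delta}2$. The decisive advantage of taking each $w_k$ to be an honest top node of some $t_{n_k} \subseteq t'$ is that $b \res \gamma = w_k \res \gamma \in t'$ for every $\gamma \le \dom(w_k)$ by downward closure, so $b$ passes only through genuine nodes of $t'$ and never lands in one of the limit-level gaps that a standard binary countable tree may possess; since $\dom(w_k) \to \delta$, this shows $b$ is a cofinal branch with $x \subseteq b$.

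Finally, for any $y \in t'$ with $x \subseteq y \subseteq b$ we have $y \subseteq w_k$ for some $k$, so by monotonicity $v = h'(x) \subseteq h'(y) \subseteq h'(w_k) = v$, whence $h'(y) = h'(x)$; at limit-height $y$ this is consistent with property (2)(c), which forces $h'(y)$ to be the union of the (constant, value-$v$) labels below it. This yields the desired branch. The one delicate point, as flagged, is the same-value extension step underlying the cofinality of $S$: everything else is bookkeeping about end-extensions and downward closure, whereas pinning the value to $v$ while still reaching the top is where property (3), together with the splitting of Lemma 4.2, must genuinely be used.
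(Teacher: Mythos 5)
Your argument follows the same route as the paper's proof: starting from $x$, repeatedly pass to a same-valued top node of some $t_n$ above the current node, and let $b$ be the union of the resulting chain; the surrounding bookkeeping (convexity of the constant-value set, downward closure, unboundedness of the domains) is exactly as in the paper. The step you flag as the main obstacle is where your reading differs from the paper's intent: Definition 4.1(3) is meant to include the clause $x \subseteq y$, i.e.\ the same-valued top node is to be found \emph{above} $x$, and the paper's proof of this lemma simply applies (3) in that form; the omission of ``$x \subseteq y$'' in the printed definition is a slip. You can see the intended reading in the paper's other proofs: the proof of Lemma 4.9 begins ``fix some $y \in \text{top}(t)$ such that $x \subseteq y$ and $h(x) = h(y)$,'' and the ``easy to check'' verification of Lemma 4.2 (producing two same-valued top nodes of $t_q$ \emph{above} each $x \in t_p$) likewise needs the extension form of (3) for $p$. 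Your suspicion that the literal form of (3) does not suffice is correct, and in a strong sense: under the literal reading the lemma is false. For a fixed nonempty $s$, let $t_n$ be the set of all binary strings of length at most $n+2$, and define $h_n(w) = \emptyset$ if $w$ is a (possibly empty) string of ones, and $h_n(w) = s^\frown u$ if $w$ consists of some ones, then a zero, then the string $u$. Each $(t_n,h_n)$ satisfies the literal (3) (every attained value occurs at the top level), the sequence is decreasing with distinct entries, but every node strictly above $\langle 0 \rangle$ has value properly extending $s = h'(\langle 0 \rangle)$, so no branch through $\langle 0 \rangle$ can have constant value. Hence the extension clause must be built into the definition; it cannot be derived from the literal one.

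This also shows that your fallback appeal to Lemma 4.2 cannot close the gap. Lemma 4.2 produces a \emph{new} condition $q \le p$, and the top nodes it adds are not elements of $t' = \bigcup_n t_n$, so they are unavailable as nodes of the desired branch; since the sequence $\langle (t_n,h_n) : n < \omega \rangle$ is arbitrary, you also cannot assume it was constructed by interleaving applications of Lemma 4.2. Moreover, the proof of Lemma 4.2 itself presupposes the extension form of (3) for $p$ (that is how one finds, above each $x \in t_p$, a top node of $t_p$ whose value equals $h_p(x)$ and which can then be split), so the appeal is circular. With (3) read as intended, there is nothing to grant, and your recursion through same-valued top nodes is precisely the paper's argument.
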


\begin{proof}
	Observe that the function 
	$h'$ has the property that for all $y, z \in t'$, 
	if $y \subseteq z$ then $h'(y) \subseteq h'(z)$. 
	Let $x \in t'$. 
	Using property (3) of Definition 4.1, it is possible 
	to construct a sequence $\langle y_n : n < \omega \rangle$ of elements of $t'$ 
	above $x$ which is cofinal in $t'$ 
	so that for all $n < \omega$, $h'(x) = h'(y_n)$. 
	Let $b = \bigcup \{ y_n : n < \omega \}$.
\end{proof}

\begin{lemma}
	Suppose that $\langle (t_n,h_n) : n < \omega \rangle$ is a decreasing 
	sequence of distinct conditions in $\p$. 
	Let $t' = \bigcup \{ t_n : n < \omega \}$, $h' = \bigcup \{ h_n : n < \omega \}$, 
	and $\delta = \sup \{ \delta_{t_n} : n < \omega \}$. 
	Suppose that $B$ is a countable collection of cofinal branches of $t'$ 
	such that for all $x \in t'$, there exists some $b \in B$ such that 
	$x \subseteq b$ and for all $y \in t'$ with $x \subseteq y \subseteq b$, 
	$h'(y) = h'(x)$. 
	Define $t = t' \cup B$. 
	Define $h : t \to {}^{<\omega_1}2$ so that 
	$h \res t' = h'$ and for all $b \in B$, 
	$h(b) = \bigcup \{ h'(y) : y \in t', \ y \subseteq b \}$. 
	Then $(t,h)$ is in $\p$, $\delta_t = \delta$, $\text{top}(t) = B$, and 
	$(t,h) \le (t_n,h_n)$ for all $n < \omega$.
\end{lemma}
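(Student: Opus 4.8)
The plan is to read off the tree-theoretic conclusions from Lemma 2.2 and then to verify each clause of Definition 4.1 for the pair $(t,h)$, together with the two ordering conditions. Since the sequence is decreasing and its terms are distinct, each $t_{n+1}$ end-extends $t_n$ with $\delta_{t_n} < \delta_{t_{n+1}}$, so $\delta = \sup_n \delta_{t_n}$ is a limit ordinal. The hypothesis on $B$ guarantees in particular that every member of $t'$ lies on some branch in $B$, so Lemma 2.2 applies and yields at once that $t = t' \cup B$ is a standard binary countable tree with $\delta_t = \delta$ and $\text{top}(t) = B$, and that $t$ end-extends each $t_n$. This settles clause (1) of Definition 4.1, the identities $\delta_t = \delta$ and $\text{top}(t) = B$, and ordering clause (4).

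Next I would confirm that $h$ is a well-defined function into ${}^{<\omega_1}2$ and verify clause (2). On $t'$, $h$ agrees with $h' = \bigcup_n h_n$, whose values are countable; recall from the proof of Lemma 4.3 that $h'$ is monotone along $\subseteq$ on $t'$. For $b \in B$, the $t'$-nodes below $b$ form a chain of order type $\delta$ along which the values $h'(y)$ increase, so $h(b)$ is a union of a countable increasing chain of countable functions and thus lies in ${}^{<\omega_1}2$. Clause (2)(a) is immediate since $h(\emptyset) = h'(\emptyset) = \emptyset$. For (2)(b), the only substantive case is $x \in t'$ with $x \subseteq b \in B$, where $h(x) = h'(x)$ occurs as a term in the union defining $h(b)$ and hence $h(x) \subseteq h(b)$; the case of two $t'$-nodes is the monotonicity of $h'$, and no branch is a proper initial segment of another node since branches have the maximal domain $\delta$. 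For (2)(c), a limit-height node $z \in t'$ lies, together with all its predecessors, in some $t_n$, so the required identity is inherited from clause (2)(c) for $(t_n,h_n)$; and for $b \in B$, which has limit height $\delta$, it is immediate from the defining formula for $h(b)$, since apart from $b$ itself the only nodes of $t$ below $b$ are the $t'$-nodes below $b$.

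The crux is clause (3): every node realizes its $h$-value on $\text{top}(t) = B$. For $b \in B$ this is trivial, taking the witness to be $b$ itself. For $x \in t'$ I would invoke the hypothesis on $B$ to fix $b \in B$ with $x \subseteq b$ such that $h'(y) = h'(x)$ for all $y \in t'$ with $x \subseteq y \subseteq b$. Splitting the union defining $h(b)$ into the predecessors of $x$, where $h'(y) \subseteq h'(x)$ by monotonicity, and the nodes between $x$ and $b$, where $h'(y) = h'(x)$ by the constancy, one sees that every term is contained in $h'(x) = h(x)$, while $h(x)$ itself appears as the term $y = x$; hence $h(b) = h(x)$. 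This is exactly where the constancy property of $B$ furnished by Lemma 4.3 is used, and I expect it to be the only genuinely nonroutine step.

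Finally, ordering clause (5) follows because $h \res t' = h'$ extends every $h_n$, so $h_n \subseteq h$ for all $n$. Collecting clauses (1)--(3) of Definition 4.1 shows $(t,h) \in \p$, and clauses (4)--(5) give $(t,h) \le (t_n,h_n)$ for every $n$, which completes the argument.
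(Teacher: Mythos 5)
Your proposal is correct: the paper itself gives no argument for this lemma (it is dismissed as ``routine''), and your write-up is exactly the intended verification --- Lemma 2.2 for the tree-theoretic conclusions, the straightforward checks of Definition 4.1(2), and the constancy hypothesis on $B$ doing the real work in clause (3), where $h(b) = h'(x)$ for the branch $b$ witnessing constancy above $x$. No gaps; the key step is identified and handled correctly.
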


The proof is routine.

\begin{lemma}
	The forcing $\p$ is $\sigma$-closed.
\end{lemma}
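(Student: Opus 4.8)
The plan is to show that $\p$ is $\sigma$-closed by taking an arbitrary decreasing sequence $\langle p_n : n < \omega \rangle$ of conditions and producing a lower bound. The main tool is Lemma 4.4, which manufactures the countable set $B$ of cofinal branches needed to invoke Lemma 4.3 (which then assembles the lower bound from that data). So the real content of the proof is reducing the general case to the hypotheses of those two lemmas.

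First I would dispose of the trivial cases. If the sequence is eventually constant, there is nothing to prove, so I may assume it has infinitely many distinct terms; passing to a subsequence, I may assume the $p_n$ are pairwise distinct and hence, by the observation that $q \le p$ with $q \ne p$ forces $\delta_p < \delta_q$, that $\langle \delta_{p_n} : n < \omega \rangle$ is strictly increasing. Writing $p_n = (t_n, h_n)$, set $t' = \bigcup_n t_n$, $h' = \bigcup_n h_n$, and $\delta = \sup_n \delta_{t_n}$.

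Next I would build the set $B$ of branches. For each $x \in t'$, Lemma 4.3 supplies a cofinal branch $b$ of $t'$ through $x$ along which $h'$ is constant equal to $h'(x)$; since $t'$ is countable, I can choose one such branch $b_x$ for every $x \in t'$ and let $B = \{ b_x : x \in t' \}$, a countable collection of cofinal branches of $t'$. This $B$ satisfies exactly the hypothesis of Lemma 4.4: for every $x \in t'$ there is a $b \in B$ (namely $b_x$) through $x$ with $h'$ constant along it. I would then apply Lemma 4.4 to $t'$, $h'$, $\delta$, and $B$ to obtain $t = t' \cup B$ and the extension $h$ of $h'$, and conclude directly from that lemma that $(t,h) \in \p$ with $(t,h) \le (t_n,h_n)$ for all $n$, which is the desired lower bound.

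I do not expect any single step to be a serious obstacle, since Lemmas 4.3 and 4.4 have been set up precisely to carry this argument; the only point requiring a little care is the reduction to a strictly $\delta$-increasing sequence of distinct conditions, so that the union $t'$ genuinely has limit height $\delta$ and the branches in $B$ are cofinal rather than bounded. Once that normalization is in place, the verification that $B$ meets the hypothesis of Lemma 4.4 is immediate from Lemma 4.3, and the conclusion follows formally.
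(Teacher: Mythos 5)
Your proof is correct and follows essentially the same route as the paper: apply Lemma 4.3 to each $x \in t'$ to produce the countable family $B$ of cofinal branches, then apply Lemma 4.4 to assemble the lower bound, with your explicit reduction to a subsequence of pairwise distinct conditions (hence strictly increasing $\delta_{p_n}$) being a point the paper leaves implicit. One small slip: your opening paragraph swaps the roles of the two lemmas (Lemma 4.3 supplies the branches, Lemma 4.4 assembles the condition), but the body of your argument uses them correctly.
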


\begin{proof}
	Let $\langle (t_n,h_n) : n < \omega \rangle$ be a decreasing sequence 
	of conditions in $\p$. 
	Define $t'$ and $h'$ as in Lemma 4.4. 
	For each $x \in t'$, apply Lemma 4.3 to fix a cofinal branch $b_x$ of $t'$ 
	such that for all $y \in t'$ with $x \subseteq y \subseteq b_x$, 
	$h'(x) = h'(y)$. 
	Let $B = \{ b_x : x \in t' \}$. 
	Now apply Lemma 4.4.
\end{proof}

\begin{lemma}
	For any $p \in \p$ and for any $\zeta < \omega_1$, there exists some $q \le p$ 
	such that $\delta_q \ge \zeta$.
\end{lemma}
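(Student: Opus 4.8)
The plan is to prove this by induction on $\zeta < \omega_1$. The statement is trivial when $\delta_p \ge \zeta$ (take $q = p$), so in each case I would assume $\delta_p < \zeta$. The two nontrivial cases are when $\zeta$ is a successor and when $\zeta$ is a limit, and the engines driving them are Lemma 4.2, which performs a single successor step increasing the height by exactly one, and the $\sigma$-closure machinery of Lemmas 4.3--4.5, which performs a limit step, producing a lower bound whose height is the supremum of the heights along a decreasing sequence.

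For the successor case I would write $\zeta = \eta + 1$. Applying the induction hypothesis to $p$ and $\eta$ gives $q' \le p$ with $\delta_{q'} \ge \eta$. If already $\delta_{q'} \ge \zeta$ we are done; otherwise $\delta_{q'} = \eta$, and applying Lemma 4.2 to $q'$ yields $q \le q'$ with $\delta_q = \delta_{q'} + 1 = \zeta$, as required.

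For the limit case I would fix a strictly increasing sequence $\langle \zeta_n : n < \omega \rangle$ cofinal in $\zeta$ with $\zeta_0 > \delta_p$, and build a decreasing sequence $\langle p_n : n < \omega \rangle$ by setting $p_0 = p$ and, given $p_n$, using the induction hypothesis at the ordinal $\max(\zeta_n, \delta_{p_n} + 1) < \zeta$ to obtain $p_{n+1} \le p_n$ with $\delta_{p_{n+1}} \ge \zeta_n$ and $\delta_{p_{n+1}} > \delta_{p_n}$. If at some stage $\delta_{p_{n+1}} \ge \zeta$ we stop and are done, so assume $\delta_{p_n} < \zeta$ for all $n$; then the strict monotonicity of the $\delta_{p_n}$ makes the conditions distinct and forces $\sup_n \delta_{p_n} = \zeta$. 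Now I would run the construction in the proof of Lemma 4.5 explicitly: using Lemma 4.3 to produce a countable covering family $B$ of cofinal branches of $t' = \bigcup_n t_{p_n}$, and then Lemma 4.4 to glue, obtaining a lower bound $q \le p_n$ for all $n$ with $\delta_q = \sup_n \delta_{p_n} = \zeta$.

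The only point requiring care is the limit step, and specifically the claim that the lower bound has height exactly the supremum rather than merely some larger bound. This is not part of the bare statement of $\sigma$-closure (Lemma 4.5), but it is delivered by the underlying construction, since Lemma 4.4 explicitly yields $\delta_t = \delta = \sup_n \delta_{t_{p_n}}$. Invoking Lemmas 4.3 and 4.4 directly rather than the black-box Lemma 4.5 secures this equality, while arranging strict growth of the $\delta_{p_n}$ guarantees both the distinctness hypothesis of those lemmas and that the supremum equals $\zeta$. Everything else is routine bookkeeping.
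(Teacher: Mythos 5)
Your proof is correct and takes essentially the same route as the paper's (one-line) proof: induction on $\zeta$, with Lemma 4.2 at successor stages and the $\sigma$-closure machinery of Lemmas 4.3--4.5 at limit stages. Your one point of divergence --- the worry that the black-box statement of Lemma 4.5 is insufficient at limits --- is actually unnecessary: since the ordering on $\p$ requires end-extension, any lower bound $q \le p_n$ satisfies $\delta_q \ge \delta_{p_n}$ for every $n$, hence $\delta_q \ge \sup_n \delta_{p_n} = \zeta$ automatically, with no need for the equality $\delta_q = \sup_n \delta_{p_n}$ delivered by the explicit construction.
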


\begin{proof}
	This can be easily proven by induction on $\zeta$ using Lemma 4.2 at successor stages 
	and Lemma 4.5 at limit stages.
\end{proof}

\begin{definition}
	For any generic filter $G$ on $\p$, let 
	$T_G = \bigcup \{ t : \exists h \ ((t,h) \in G) \}$ and 
	$H_G = \bigcup \{ h : \exists t \ ((t,h) \in G) \}$. 
	Let $\dot T$ and $\dot H$ be $\p$-names for these objects.
\end{definition}

\begin{lemma}
	Let $G$ be a generic filter on $\p$. 
	Then in $V[G]$:
	\begin{enumerate}
		\item $T_G$ is a normal binary $\omega_1$-tree 
		which is a downwards closed subtree of ${}^{<\omega_1}2$;
		\item $H_G : T_G \to {}^{<\omega_1}2$ is a function satisfying:
		\begin{enumerate}
			\item $H(\emptyset) = \emptyset$;
			\item $x <_{T_G} y$ implies $H(x) \subseteq H(y)$;
			\item if $z \in T_G$ has limit height, then 
			$H_G(z) = \bigcup \{ H_G(y) : y <_{T_G} z \}$.
		\end{enumerate}
	\end{enumerate} 
\end{lemma}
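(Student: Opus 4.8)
The plan is to derive each global assertion about $T_G$ and $H_G$ from the corresponding property of a single condition in $G$, glued together using genericity, the density lemma (Lemma 4.6), and the fact that any two conditions in $G$ have a common lower bound in $G$. I would first settle the bookkeeping. Each $t_p$ is a downwards closed subset of ${}^{\le \delta_p}2$ with $\delta_p < \omega_1$, so $T_G = \bigcup_{p \in G} t_p$ is a downwards closed subtree of ${}^{<\omega_1}2$: if $x \in T_G$, then $x \in t_p$ for some $p \in G$, and hence $x \res \beta \in t_p \subseteq T_G$ for all $\beta < \dom(x)$. Since $\dom(h_p) = t_p$ for each $p$, we get $\dom(H_G) = T_G$, and $H_G$ is single-valued: if $x \in t_p \cap t_q$ with $p, q \in G$, pick $r \in G$ with $r \le p, q$, so that $h_p \subseteq h_r$ and $h_q \subseteq h_r$, whence $h_p(x) = h_r(x) = h_q(x)$.

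The three clauses of item (2) are then local. Fixing $z \in T_G$ and $p \in G$ with $z \in t_p$, downward closure of $t_p$ shows that the $<_{T_G}$-predecessors of $z$ are exactly the $y \in t_p$ with $y \subsetneq z$, and $H_G$ agrees with $h_p$ on all of them. Clause (a) is property (2)(a) of Definition 4.1; clause (b) is property (2)(b) applied inside $t_p$ (which contains $x$ once it contains $y$); and clause (c) is property (2)(c), since its right-hand side $\bigcup \{ h_p(y) : y \in t_p,\ y \subsetneq z \}$ is precisely $\bigcup \{ H_G(y) : y <_{T_G} z \}$.

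The substance of the lemma is item (1), and the heart of it is that every level of $T_G$ is countable and that $T_G$ has height exactly $\omega_1$. For a fixed $\zeta < \omega_1$, Lemma 4.6 makes $\{ q : \delta_q \ge \zeta + 1 \}$ dense, so by genericity some $p \in G$ has $\delta_p > \zeta$. I would then show that the level $\zeta$ of $T_G$ equals the level $\zeta$ of $t_p$: given any $q \in G$, a common extension $r \in G$ of $p$ and $q$ end-extends both, so any node of $q$ at level $\zeta$ lies in $t_q \subseteq r$ and, having domain $\zeta < \delta_p$, lies in $r \cap {}^{\le \delta_p}2 = t_p$. Thus the level is a subset of the countable set $t_p$, hence countable, and it is nonempty because $t_p$ populates every level $\le \delta_p$ (each node extends to the top). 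Since $\zeta$ was arbitrary and each $t_p \subseteq {}^{\le \delta_p}2$ with $\delta_p < \omega_1$, the height is exactly $\omega_1$. This ``freezing'' of an initial segment of $T_G$ by a single generic condition is the one place that genuinely exploits the interaction of end-extension with genericity, and is where I expect whatever subtlety there is to reside.

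Normality then follows from the same ingredients. The node $\emptyset$ is the unique node of level $0$, so there is a unique root. Given $x \in T_G$, choosing $p \in G$ with $x \in t_p$ and $\delta_p > \dom(x)$ (via a common extension) and invoking property (3) of a standard binary countable tree (Definition 2.1) gives $x^\frown 0, x^\frown 1 \in t_p \subseteq T_G$, so every node splits into two immediate successors. For upward extension, given $\dom(x) \le \zeta < \omega_1$, take $p \in G$ with $x \in t_p$ and $\delta_p > \zeta$; property (4) of Definition 2.1 provides $w \in t_p$ at level $\delta_p$ with $x \subseteq w$, and then $w \res \zeta \in t_p$ is a level-$\zeta$ node above $x$. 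Finally, the requirement that distinct nodes at a limit level have distinct predecessor sets is automatic, since a node at a limit level is a function determined by the union of its proper restrictions. Assembling these facts yields that $T_G$ is a normal binary $\omega_1$-tree, completing item (1).
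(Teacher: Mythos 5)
Your proposal is correct and takes essentially the same approach as the paper: the paper's own (very terse) proof consists of citing Lemma 4.6 for the height and the ``freezing'' fact that $(t,h) \in G$ implies $T_G \res (\delta_t + 1) = t$ and $H_G \res t = h$, which is precisely what you establish via directedness of $G$ and end-extension, and from which you derive levels, normality, and the properties of $H_G$ just as the paper intends. Your write-up simply supplies the details that the paper leaves to the reader.
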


\begin{proof}
	The height of $T_G$ is $\omega_1$ by Lemma 4.6.
	The rest of the properties stated in the lemma follow from the fact that if 
	$(t,h) \in G$, then $T_G \res (\delta_{t}+1) = t$ and $H_G \res t = h$. 
\end{proof}

\begin{lemma}
	Let $G$ be a generic filter on $\p$. 
	Then for all $x \in T_G$, for all $\zeta < \omega_1$, 
	and for all $s \in {}^{<\omega_1}2$, if 
	$H_G(x) \subseteq s$, then there exists some $z \in T_G$ such that 
	$x \le_{T_G} z$, $\h_T(z) \ge \zeta$, and $H_G(z) = s$.
\end{lemma}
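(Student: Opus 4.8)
The plan is to prove this as a density argument over $\p$. Since $\p$ is $\sigma$-closed (Lemma 4.5), it adds no new $\omega$-sequences of ordinals and hence no new elements of ${}^{<\omega_1}2$, so the relevant $x$, $s$, and $\zeta$ all lie in $V$. Working in $V[G]$, fix $x \in T_G$, $\zeta < \omega_1$, and $s \in {}^{<\omega_1}2$ with $H_G(x) \subseteq s$, and choose $p_0 \in G$ with $x \in t_{p_0}$, so that $h_{p_0}(x) = H_G(x) \subseteq s$. It then suffices to show that the set of $q \le p_0$ admitting some $z \in t_q$ with $x \subseteq z$, $\dom(z) \ge \zeta$, and $h_q(z) = s$ is dense below $p_0$; a generic witness $q \in G$ then yields $z \in T_G$ with $x \le_{T_G} z$, $\h_T(z) = \dom(z) \ge \zeta$, and $H_G(z) = h_q(z) = s$. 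Thus the whole argument reduces to the following purely combinatorial claim in $V$: for every $p \in \p$ with $x \in t_p$ and $h_p(x) \subseteq s$, there are $q \le p$ and $z \in t_q$ with $x \subseteq z$, $\dom(z) \ge \zeta$, and $h_q(z) = s$.

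To prove the claim I would proceed in two moves, first installing the target value $s$ and then lifting the height. For the first move, apply property (3) of Definition 4.1 to fix a node $v \in \text{top}(t_p)$ with $x \subseteq v$ and $h_p(v) = h_p(x) \subseteq s$. Define $p_1 \le p$ with $\delta_{p_1} = \delta_p + 1$ by adjoining both immediate successors of every node of $\text{top}(t_p)$ and setting $h_{p_1}(v^\frown 0) = s$ while leaving $h_{p_1}(w^\frown i) = h_p(w)$ for every other pair $(w,i)$. Writing $w_0 = v^\frown 0$, we then have $x \subseteq w_0$ and $h_{p_1}(w_0) = s$. For the second move, apply Lemma 4.6 to obtain $q \le p_1$ with $\delta_q \ge \zeta$; since $q \le p_1$ we still have $w_0 \in t_q$ and $h_q(w_0) = s$, so a final appeal to property (3) of Definition 4.1 produces a node $z \in \text{top}(t_q)$ with $w_0 \subseteq z$ and $h_q(z) = h_q(w_0) = s$. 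This $z$ satisfies $x \subseteq z$, $\dom(z) = \delta_q \ge \zeta$, and $h_q(z) = s$, as required.

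The only step needing genuine care, and the main obstacle, is checking that $p_1$ is a legitimate condition, specifically that it still satisfies property (3) of Definition 4.1 after the value of the single node $v^\frown 0$ has been raised all the way to $s$. This is precisely where the move departs from Lemmas 4.2--4.6, which only propagate or take unions of existing values, whereas here we freely enlarge one value. The point is that every node of $t_p$ retains a top witness in $p_1$: given $x' \in t_p$, property (3) for $p$ supplies a node $u \in \text{top}(t_p)$ with $x' \subseteq u$ and $h_p(u) = h_p(x')$, and because we kept $h_{p_1}(u^\frown 1) = h_p(u)$, the node $u^\frown 1 \in \text{top}(t_{p_1})$ witnesses $x'$; the new value $s$ is witnessed at the top by $w_0$ itself. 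The remaining requirements of Definition 4.1 (monotonicity, and the limit clause, which is vacuous at the successor level $\delta_p+1$) together with the standard-tree conditions are routine, and $p_1 \le p$ is immediate. Once $p_1$ is verified to be a condition, the two appeals to property (3) and the single appeal to Lemma 4.6 close the argument.
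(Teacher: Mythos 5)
Your proof is correct and follows essentially the same route as the paper: both arguments reduce to a density claim whose crux is the one-level end-extension in which a single new top node above a witness supplied by property (3) of Definition 4.1 has its $h$-value raised to $s$, with condition-hood verified exactly as you do (the other successor keeps the old value, so all old witnesses survive). The only difference is cosmetic ordering: the paper first uses $\sigma$-closedness to decide the names and stretch the condition to height $\ge \zeta$ and then installs $s$ at the top level, whereas you install $s$ first and then stretch via Lemma 4.6, which costs you one extra appeal to property (3) to lift $s$ to a node of height $\ge \zeta$.
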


\begin{proof}
	We prove the lemma by using a density argument in $V$. 
	Let $(t,h) \in \p$, let $\zeta < \omega_1$, and suppose that $(t,h)$ forces that 
	$\dot H(\dot x) \subseteq \dot s \in {}^{<\omega_1}2$. 
	Since $\p$ is $\sigma$-closed, by extending further if necessary we may 
	assume that $\delta_t \ge \zeta$ and 
	for some $x \in t$ and for some $s \in {}^{<\omega_1}2$, 
	$(t,h)$ forces that $\dot x = x$, $\dot s = s$, and 
	$\dot H(\dot x) = h(x) \subseteq s$. 
	Fix some $y \in \text{top}(t)$ such that $x \subseteq y$ and $h(x) = h(y)$. 
	Then $\dom(y) \ge \zeta$.

	Define 
	$$
	u = t \cup \{ w^\frown i : w \in \text{top}(t), \ i < 2 \}.
	$$
	Define $k : u \to {}^{<\omega_1}2$ by letting $k \res t = h$ and 
	for all $w \in \text{top}(t)$ and for all $i < 2$:
	\begin{itemize}
		\item if $w \ne y$ then $k(w^\frown i) = h(w)$;
		\item if $w = y$ and $i = 0$ then $k(w^\frown i) = h(w)$;
		\item if $w = y$ and $i = 1$ then $k(w^\frown i) = s$.
	\end{itemize}
	Let $z = y^\frown 1$.  
	It is easy to check that $(u,k)$ is a condition which extends 
	$(t,h)$ and forces that $\dot x \le_{\dot T} z$ 
	and $\dot H(z) = k(z) = s$.
\end{proof}
	
\begin{corollary}
	The forcing poset $\p$ forces that $\mathcal R(\dot T,\dot H)$ holds.
\end{corollary}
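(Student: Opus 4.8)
The plan is to check that, for an arbitrary generic filter $G$ on $\p$, the pair $(T_G, H_G)$ satisfies each of the three clauses of Definition 3.1 in $V[G]$, and then to conclude by the forcing theorem that $\p$ forces $\mathcal R(\dot T, \dot H)$. The key observation is that the relation $\mathcal R$ is nothing more than the conjunction of the conclusions of Lemmas 4.8 and 4.9, so almost no new work is required.

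First I would dispatch clauses (1) and (2) of $\mathcal R$ by a direct appeal to Lemma 4.8. That lemma states precisely that $T_G$ is a normal binary $\omega_1$-tree which is a downwards closed subtree of ${}^{<\omega_1}2$ (clause (1)), and that $H_G : T_G \to {}^{<\omega_1}2$ satisfies $H_G(\emptyset) = \emptyset$, monotonicity $x <_{T_G} y \Rightarrow H_G(x) \subseteq H_G(y)$, and the union property $H_G(z) = \bigcup \{ H_G(y) : y <_{T_G} z \}$ at limit heights (clause (2), items (a)--(c)). These coincide verbatim with the requirements of Definition 3.1, so no argument beyond the citation is needed.

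Next I would obtain clause (3) of $\mathcal R$ from Lemma 4.9, which asserts that for every $x \in T_G$, every $\zeta < \omega_1$, and every $s \in {}^{<\omega_1}2$ with $H_G(x) \subseteq s$, there is some $z \in T_G$ with $x \le_{T_G} z$, $\h_{T_G}(z) \ge \zeta$, and $H_G(z) = s$. The one discrepancy to reconcile is that Definition 3.1 demands $\dom(z) \ge \zeta$ whereas Lemma 4.9 supplies $\h_{T_G}(z) \ge \zeta$. Since $T_G$ is a downwards closed subtree of ${}^{<\omega_1}2$ ordered by $\subseteq$, the nodes strictly below $z$ are exactly the restrictions $z \res \beta$ for $\beta < \dom(z)$; hence $\h_{T_G}(z) = \dom(z)$ and the two conditions are identical. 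With this identification the conclusion of Lemma 4.9 is exactly clause (3).

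As each of these facts holds in $V[G]$ for an arbitrary generic $G$, the forcing theorem yields that $\p$ forces $\mathcal R(\dot T, \dot H)$. I anticipate no genuine obstacle here: the corollary is essentially a bookkeeping step repackaging Lemmas 4.8 and 4.9 into the single relation $\mathcal R$, the only nontrivial point being the trivial identity $\h_{T_G}(z) = \dom(z)$ for downwards closed subtrees of ${}^{<\omega_1}2$.
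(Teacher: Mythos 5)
Your proposal is correct and is exactly the paper's argument: the paper proves this corollary simply by citing Lemmas 4.8 and 4.9, which together give all clauses of Definition 3.1 for $(T_G, H_G)$ in any generic extension. Your additional remark reconciling $\h_{T_G}(z)$ with $\dom(z)$ for downwards closed subtrees of ${}^{<\omega_1}2$ is a fine point the paper leaves implicit, but it does not change the route.
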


\begin{proof}
	By Lemmas 4.8 and 4.9.
\end{proof}

\section{The General Step}

We now describe the kind of forcings which we use after the first step in the forcing iteration 
of the proof of the main theorem.

\begin{definition}
	Assume that $\mathcal R(T,H)$ holds and $g \in {}^{\omega_1}2$. 
	Define $\q(T,H,g)$ to be the forcing poset whose conditions are elements 
	$x \in T$ such that $H(x) \subseteq g$, ordered by 
	$y \le x$ if $x \le_T y$.
\end{definition}

\begin{lemma}
	Assume that $\mathcal R(T,H)$ holds and $g \in {}^{\omega_1}2$. 
	Then:
	\begin{enumerate}
		\item $\emptyset \in \q(T,H,g)$;
		\item for all $x \in \q(T,H,g)$ and for all $\zeta < \omega_1$, there exists some 
	$z \in \q(T,H,g)$ such that $x \le_T z$ and $\h_T(z) \ge \zeta$.
	\end{enumerate}
\end{lemma}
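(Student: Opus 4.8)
The plan is to derive both parts directly from the defining clauses of $\mathcal R(T,H)$; the only substantive point is a single application of property (3) of that definition. Neither part requires any construction beyond invoking the hypotheses, so the main work is simply unwinding the definition of $\q(T,H,g)$.

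For part (1), I would first note that since $T$ is a downwards closed subtree of ${}^{<\omega_1}2$, it contains the empty function $\emptyset$ as its root. By property (2)(a) of $\mathcal R(T,H)$ we have $H(\emptyset) = \emptyset$, and the empty function is an initial segment of $g$, so $H(\emptyset) \subseteq g$. Hence $\emptyset$ satisfies the membership requirement for $\q(T,H,g)$.

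For part (2), fix $x \in \q(T,H,g)$ and $\zeta < \omega_1$. The key observation is that the hypothesis $H(x) \subseteq g$ means precisely that $H(x) = g \res \dom(H(x))$, that is, $H(x)$ is an initial segment of $g$. I would therefore choose any countable ordinal $\beta$ with $\dom(H(x)) \le \beta < \omega_1$ and set $s = g \res \beta$. Then $s \in {}^{<\omega_1}2$ and $H(x) \subseteq s \subseteq g$. Applying property (3) of $\mathcal R(T,H)$ to $x$, the ordinal $\zeta$, and this $s$ yields some $z \in T$ with $x \le_T z$, $\dom(z) \ge \zeta$, and $H(z) = s$. Since $T$ is a normal downwards closed subtree of ${}^{<\omega_1}2$, the level of a node equals the domain of the corresponding function, so $\h_T(z) = \dom(z) \ge \zeta$. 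Finally $H(z) = s \subseteq g$ gives $z \in \q(T,H,g)$, and $x \le_T z$ is exactly what is required.

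There is no real obstacle here: the argument is a matter of unwinding definitions and invoking the density-type clause (3) of $\mathcal R$. The one point that deserves a moment's care is the identification $\h_T(z) = \dom(z)$, which is needed to translate the ``$\dom(z) \ge \zeta$'' delivered by property (3) into the ``$\h_T(z) \ge \zeta$'' demanded by the statement; this is immediate from $T$ being a downwards closed subtree of ${}^{<\omega_1}2$.
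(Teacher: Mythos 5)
Your proof is correct and follows essentially the same route as the paper: both parts reduce to unwinding the definition of $\q(T,H,g)$ plus one application of clause (3) of Definition 3.1. The only cosmetic difference is that the paper applies clause (3) with $s = H(x)$ itself (yielding $H(z) = H(x)$), whereas you apply it to a longer initial segment $s = g \res \beta$ of $g$; both choices work for exactly the same reason.
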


\begin{proof}
	The first statement follows from the fact that $H(\emptyset) = \emptyset$. 
	For the second statement, consider $x \in \q(T,H,g)$ and $\zeta < \omega_1$. 
	Then $H(x) \subseteq g$. 
	By Definition 3.1(3) applied to $s = H(x)$, there exists some $z \in T$ such that 
	$x \le_T z$, $\h_T(z) \ge \zeta$, and $H(z) = H(x)$. 
	Then $H(z) \subseteq g$ so $z \in \q(T,H,g)$.
\end{proof}

\begin{definition}
	Assume that $\mathcal R(T,H)$ holds and $g \in {}^{\omega_1}2$. 
	Let $\dot d(T,H,g)$ be a $\q(T,H,g)$-name for the union of the generic filter.
\end{definition}

\begin{lemma}
	Assume that $\mathcal R(T,H)$ holds and $g \in {}^{\omega_1}2$. 
	Let $G$ be a generic filter on $\q(T,H,g)$. 
	Let $d = \dot d(T,H,g)^G$. 
	Then $d$ is a cofinal branch of $T$ and 
	$\bigcup \{ H(d \res \beta) : \beta < \omega_1 \} = g$.
\end{lemma}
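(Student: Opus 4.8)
The plan is to argue first that $d$ is a cofinal branch by a standard density/genericity argument, and then to establish the equation $\bigcup\{H(d\res\beta):\beta<\omega_1\}=g$ by proving the two inclusions separately, with essentially all of the content concentrated in one density argument built on Definition 3.1(3).

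First I would observe that any two conditions in $G$ are compatible, and that two nodes $x,y\in T$ are compatible in $\q(T,H,g)$ precisely when they have a common $\le_T$-extension, i.e.\ when they are $<_T$-comparable. Hence $G$ is a $\subseteq$-chain in $T$, and $d=\bigcup G$ is a single function into $2$. To see that $d$ is cofinal, note that for each $\zeta<\omega_1$ the set $\{x\in\q(T,H,g):\h_T(x)\ge\zeta\}$ is dense by Lemma 5.2(2); genericity then yields, for every $\zeta<\omega_1$, some $x\in G$ with $\dom(x)\ge\zeta$, so $\dom(d)=\omega_1$. For any $\beta<\omega_1$, choosing such an $x$ with $\dom(x)>\beta$ gives $d\res\beta=x\res\beta$, which lies in $T$ since $T$ is downwards closed; thus every initial segment of $d$ is in $T$ and $d$ is a cofinal branch.

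For the equation, the inclusion $\bigcup\{H(d\res\beta):\beta<\omega_1\}\subseteq g$ is the easy direction. Fix $\beta<\omega_1$ and choose $x\in G$ of height $>\beta$, so that $d\res\beta<_T x$. Since $x\in\q(T,H,g)$ we have $H(x)\subseteq g$, and by monotonicity (Definition 3.1(2)(b)) $H(d\res\beta)\subseteq H(x)\subseteq g$. As each $H(d\res\beta)$ is thus an initial segment of $g$, so is their union.

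The remaining inclusion amounts to showing that $\sup\{\dom(H(d\res\beta)):\beta<\omega_1\}=\omega_1$, which is where the real work lies. For each $\eta<\omega_1$ I would show that $E_\eta=\{x\in\q(T,H,g):\dom(H(x))\ge\eta\}$ is dense. Given $x\in\q(T,H,g)$, set $\eta'=\max(\eta,\dom(H(x)))$ and $s=g\res\eta'$; since $H(x)\subseteq g$ is an initial segment of $g$ of length $\le\eta'$, we have $H(x)\subseteq s$, so Definition 3.1(3) (applied with this $s$ and any $\zeta$) provides $z\in T$ with $x\le_T z$ and $H(z)=s=g\res\eta'\subseteq g$. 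Then $z\in\q(T,H,g)$, $z\le x$, and $\dom(H(z))=\eta'\ge\eta$, so $z\in E_\eta$, proving density. By genericity there is, for each $\eta$, some $x\in G$ with $\dom(H(x))\ge\eta$; writing $\beta=\dom(x)$ gives $x=d\res\beta$, whence $\dom(H(d\res\beta))\ge\eta$. Therefore $\bigcup\{H(d\res\beta):\beta<\omega_1\}$ is an initial segment of $g$ of length $\omega_1$, so it equals $g$. The main obstacle is precisely this density argument: one must feed the correct target $s=g\res\eta'$ into Definition 3.1(3) and verify $H(x)\subseteq s$ so that the axiom applies; everything else is routine bookkeeping about chains and downward closure.
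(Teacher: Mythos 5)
Your proof is correct and follows essentially the same route as the paper: cofinality of $d$ via the density statement in Lemma 5.2(2), the easy inclusion from the definition of the conditions in $\q(T,H,g)$, and the reverse inclusion by a density argument feeding $g$ restricted to a sufficiently large ordinal into Definition 3.1(3). The only cosmetic difference is that you handle the case distinction uniformly via $\eta' = \max(\eta,\dom(H(x)))$, where the paper splits into the two cases $\gamma \le \dom(H(x))$ and $\dom(H(x)) < \gamma$.
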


\begin{proof}
	The fact that $d$ is a cofinal branch of $T$ follows easily from Lemma 5.2(2). 
	Note that $G = \{ d \res \beta : \beta < \omega_1 \}$. 
	Therefore, $\bigcup \{ H(d \res \beta) : \beta < \omega_1 \}$ is 
	a subset of $g$ by the definition of $\q(T,H,g)$. 

	Conversely, let $\gamma < \omega_1^V$ and we show that 
	$g \res \gamma \subseteq H(z)$ for some $z \in G$. 	
	We give a density argument in $V$. 
	So let $x \in \q(T,H,g)$, and we find some $z \in \q(T,H,g)$ 
	such that $x \le_T z$ and $g \res \gamma \subseteq H(z)$. 
	Since $x \in \q(T,H,g)$, $H(x) = g \res \dom(H(x))$. 
	If $\gamma \le \dom(H(x))$, then $g \res \gamma \subseteq H(x)$ and we are done. 
	Otherwise, $\dom(H(x)) < \gamma$ so $H(x) \subseteq g \res \gamma$. 
	Applying Definition 3.1(3) to $x$ and $g \res \gamma$, 
	there exists some $z \in T$ such that $x \le_T z$ and $H(z) = g \res \gamma$. 
	Then $z$ is as required.
\end{proof}

\begin{lemma}
	Assume that $\mathcal R(T,H)$ holds and $g \in {}^{\omega_1}2$. 
	Suppose that $\langle x_n : n < \omega \rangle$ is a descending sequence of 
	conditions in $\q(T,H,g)$ such that $y = \bigcup \{ x_n : n < \omega \}$ 
	is in $T$. 
	Then $y \in \q(T,H,g)$ and for all $n < \omega$, $y \le x_n$.
\end{lemma}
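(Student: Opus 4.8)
The plan is to verify the two conclusions separately, treating $x_n \le_T y$ as essentially immediate and reserving the real work for the inclusion $H(y) \subseteq g$. Since $T$ is a downwards closed subtree of ${}^{<\omega_1}2$, the tree order is just end-extension of functions, so $x_n \le_T y$ amounts to $x_n \subseteq y$. Because the sequence is descending in $\q(T,H,g)$, we have $x_n \le_T x_{n+1}$, i.e. the $x_n$ are $\subseteq$-increasing, and hence each $x_n$ is an initial segment of their union $y$. Thus $x_n \le_T y$, which is exactly $y \le x_n$ in $\q(T,H,g)$. This step is purely combinatorial and makes no use of $H$.

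The substantive claim is that $H(y) \subseteq g$, which together with the hypothesis $y \in T$ gives $y \in \q(T,H,g)$. I would split into two cases. If $y = x_n$ for some $n$, then $H(y) = H(x_n) \subseteq g$ because $x_n$ is a condition of $\q(T,H,g)$, and we are done. Otherwise $x_n \subsetneq y$ for every $n$, so $\dom(x_n) < \dom(y)$ for all $n$ while $\dom(y) = \sup_n \dom(x_n)$; since this supremum is strictly above each term it is not attained, and therefore $\dom(y)$ is a limit ordinal, i.e. $y$ has limit height. This is precisely what licenses the use of clause (2)(c) in the definition of $\mathcal R(T,H)$, which then yields $H(y) = \bigcup \{ H(w) : w <_T y \}$.

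To finish I would show $H(w) \subseteq g$ for every $w <_T y$. Given such a $w$, we have $\dom(w) < \dom(y) = \sup_n \dom(x_n)$, so $\dom(w) < \dom(x_n)$ for some $n$; as $w$ and $x_n$ are both initial segments of $y$, this gives $w \le_T x_n$, and clause (2)(b) of $\mathcal R(T,H)$ then provides $H(w) \subseteq H(x_n) \subseteq g$. Since the $H(w)$ are $\subseteq$-increasing, their union $H(y)$ is itself an initial segment of $g$, so $H(y) \subseteq g$ and $y \in \q(T,H,g)$. I do not expect any genuine obstacle here; the only point needing care is the verification that $y$ has limit height in the non-stabilizing case, since that is exactly the hypothesis required to invoke the limit clause (2)(c), while everything else follows directly from the definitions of $\mathcal R(T,H)$ and $\q(T,H,g)$.
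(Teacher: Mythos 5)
Your proof is correct and follows essentially the same route as the paper: the key step in both is to invoke clause (2)(c) of Definition 3.1 to write $H(y)$ as the union of the $H(x_n)$, each of which is contained in $g$. You are simply more careful than the paper's terse argument, which skips the case split on whether the sequence stabilizes and the verification that $y$ has limit height before applying the limit clause.
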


\begin{proof}
	For all $n < \omega$, $H(x_n) \subseteq g$. 
	By Definition 3.1(2c), 
	$H(y) = \bigcup \{ H(x_n) : n < \omega \} \subseteq g$. 
	So $y \in \q(T,H,g)$.
\end{proof}

\section{The Forcing Iteration}

We now develop the forcing iteration which we use in the next section to prove 
the main theorem.

\begin{definition}
	A countable support forcing iteration 
	$\langle \p_i, \dot \q_j : i \le \theta, \ j < \theta \rangle$ is \emph{suitable} if 
	$\theta \ge 1$ and there exists a sequence 
	$\langle \dot g_i : 0 < i < \theta \rangle$ 
	such that the following are satisfied:
	\begin{enumerate}
		\item $\dot \q_0$ is a $\p_0$-name for $\p$;
		\item for all $0 < i < \theta$, $\p_i$ does not add reals;
		\item for all $0 < i < \theta$, $\p_i$ forces that $\dot g_i \in {}^{\omega_1}2$ 
		and $\dot \q_i = \q(\dot T,\dot H,\dot g_i)$.
	\end{enumerate}	
\end{definition}

Note that we are not assuming as a matter of definition that $\p_\theta$ 
does not add reals. 
The fact that it does not, which we prove later, allows us to define suitable 
forcing iterations by recursion.

The main result of this section is the following theorem.

\begin{thm}
	Suppose that 
	$\langle \p_i, \dot \q_j : i \le \theta, \ j < \theta \rangle$ 
	is a countable support forcing iteration which is suitable. 
	Then:
	\begin{enumerate}
		\item $\p_\theta$ contains a $\sigma$-closed dense subset $D_\theta$, 
		and hence does not add reals;
		\item $|D_\theta| = |\max \{ 2, \theta \}|^\omega$;
		\item $\p_\theta$ is $(2^\omega)^+$-c.c.
	\end{enumerate}
\end{thm}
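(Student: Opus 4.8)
The plan is to establish all three parts at once by exhibiting an explicit dense subset $D_\theta \subseteq \p_\theta$ of fully synchronized conditions. I would let $D_\theta$ consist of those $p \in \p_\theta$ for which there is a single ordinal $\delta_p < \omega_1$ such that $p(0)$ is a concrete pair $(t_p, h_p) \in \p$ with $\delta_{t_p} = \delta_p$, and such that for every $i \in \supp(p) \setminus \{0\}$ the restriction $p \res i$ decides $p(i)$, as a check-name, to be a specific node $x_i^p \in \text{top}(t_p)$. Since a condition whose first coordinate is $(t_p,h_p)$ forces the generic tree below height $\delta_p + 1$ to equal $t_p$ (as in the proof of Lemma 4.8), every coordinate of a condition in $D_\theta$ names a concrete node sitting at the common level $\delta_p$. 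This synchronization is exactly what makes coordinatewise limits possible.

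For the $\sigma$-closure of $D_\theta$, I would take a descending sequence $\langle p_n : n < \omega \rangle$ in $D_\theta$ and pass to a subsequence so that the heights $\delta_{p_n}$ are either constant or strictly increasing. If they are constant, then $p_n(0)$ is fixed and each coordinate stabilizes once it enters the (countable) union of supports, so the pointwise union is the desired lower bound in $D_\theta$. If they strictly increase to $\delta$, I would build the lower bound coordinatewise: on coordinate $0$, apply Lemma 4.3 to obtain constant-$h'$ cofinal branches covering $t' = \bigcup_n t_{p_n}$ and adjoin the branches $b_i = \bigcup_n x_i^{p_n}$ for each $i$ in the union of supports, then invoke Lemma 4.4 to assemble a condition $(t,h) \in \p$ with $\delta_t = \delta$ and $\text{top}(t) = B \ni b_i$. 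By Definition 3.1(2c), $H(b_i) = \bigcup_n H(x_i^{p_n})$, and since each $p_n$ forces $H(x_i^{p_n}) \subseteq \dot g_i$, Lemma 5.5 shows $b_i$ is a legitimate condition of $\dot\q_i$ below every $x_i^{p_n}$. Setting the limit condition to have first coordinate $(t,h)$ and $i$-th coordinate $\check b_i$ yields the required lower bound in $D_\theta$. Once $D_\theta$ is also shown to be dense, part (1) follows, and the ``hence does not add reals'' clause is automatic since a poset with a $\sigma$-closed dense subset adds no reals.

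For part (2), each condition in $D_\theta$ is determined by its countable support $S \subseteq \theta$, the coordinate-$0$ datum $(t_p, h_p) \in \p$, and the assignment $i \mapsto x_i^p \in \text{top}(t_p)$ on $S \setminus \{0\}$. There are at most $|\theta|^\omega$ supports, at most $|\p| = 2^\omega$ choices of $(t_p, h_p)$, and at most $\omega^\omega = 2^\omega$ assignments into the countable set $\text{top}(t_p)$; their product is $|\max\{2,\theta\}|^\omega$ (the ``$\max$'' and the exponent absorb the degenerate cases where $\theta$ is finite). Exhibiting one condition for each countable support, together with the $2^\omega$ possibilities on coordinate $0$, gives the matching lower bound.

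For part (3), I would take any family of $(2^\omega)^+$ conditions, refine it into $D_\theta$ by density, and apply the $\Delta$-system lemma to the countable supports; this is licensed because $(2^\omega)^+$ is regular and $|\alpha|^\omega \le 2^\omega$ whenever $|\alpha| \le 2^\omega$. On the countable root $R$ the data $(t_p, h_p)$ together with $\langle x_i^p : i \in R \rangle$ range over a set of size $2^\omega$, so by pigeonhole $(2^\omega)^+$-many conditions agree on all of it; any two such conditions $p$ and $q$ agree on $\{0\} \cup R$ and have otherwise disjoint supports, and I would check that $r = p \cup q$ is a common extension by induction on $i \le \theta$. The two places I expect real work are the density of $D_\theta$ and this amalgamation. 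Density requires a fusion over a countable enumeration of $\supp(p)$ that decides each coordinate's name (using that $\p_i$ adds no reals for $i < \theta$) and then repeatedly raises coordinate $0$ and every already-activated coordinate to a common new height via Lemma 4.2 and Lemma 5.2(2), so that the limit formed exactly as above lands in $D_\theta$ below $p$; keeping all coordinates synchronized at the limit is the delicate bookkeeping. For the amalgamation, the subtle point is that although $\dot g_i$ is a name depending on the enlarged generic, the monotonicity $r \res i \le p \res i$ (respectively $q \res i$) transfers the containment $H(x_i) \subseteq \dot g_i$ to $r \res i$, so $r(i)$ remains a legitimate condition of $\dot\q_i$ despite the disjoint-support amalgamation.
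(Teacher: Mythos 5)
Your proposal is correct in outline and, for most of the theorem, follows the paper's own route: you define exactly the paper's dense set $D_\theta$ (Definition 6.4) of conditions whose first coordinate is a decided pair $(t_p,h_p)$ and whose other coordinates are decided nodes of $\text{top}(t_p)$; your $\sigma$-closure argument is the paper's two-case analysis (heights eventually constant versus strictly increasing, the latter assembled from Lemmas 4.3, 4.4 and 5.5 exactly as in the paper's Lemma 6.7); your counting is the paper's Lemma 6.5; and your $\Delta$-system-plus-root-pigeonhole amalgamation is the ``standard argument'' the paper compresses into one sentence in Lemma 6.10, with the correct observation that $r \res i \le q \res i$ transfers membership in $\dot \q_i$ across the disjoint parts of the supports. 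The one genuinely different design choice is the density proof, which is the heart of the theorem. The paper defines $D_i$ for \emph{every} $i \le \theta$ (with the coherence Lemma 6.6) and proves density by induction on $i$: at a successor $i = i_0 + 1$ it runs a fusion that handles only the single new coordinate, because each step of that fusion can drop into the dense set $D_{i_0}$, and that one move re-decides and re-levels \emph{all} coordinates below $i_0$ simultaneously; at limits it uses boundedness of countable supports (uncountable cofinality) or $\sigma$-closure of $D_i$ (countable cofinality). You instead propose a single flattened fusion dovetailing over all activated coordinates of one condition. That route can be made to work---deciding node names is legitimate because no reals are added (hence $\omega_1$ is preserved), raising values is Lemma 5.2(2), and your limit assembly is the right one---but the ``delicate bookkeeping'' you flag and do not carry out (revisiting every activated coordinate cofinally often, absorbing the new support coordinates introduced by each deciding step, and ordering the decisions within a stage so that extending below one coordinate does not undo work at another) is precisely what the paper's inductive organization eliminates: the inductive hypothesis ``$D_{i_0}$ is dense'' \emph{is} the resynchronization device. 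So the paper buys a routine fusion at the price of an induction over stages together with the auxiliary lemmas about the $D_i$, while your route buys a uniform, one-pass argument at the price of executing that bookkeeping by hand---which your proposal sketches plausibly but leaves incomplete, so you should either write out that dovetailing fusion in full or adopt the paper's stagewise induction.
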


For the rest of the section, fix a suitable forcing iteration 
$\langle \p_i, \dot \q_j : i \le \theta, \ j < \theta \rangle$ and a sequence 
$\langle \dot g_i : 0 < i < \theta \rangle$ as in Definition 6.1.

\begin{lemma}
	Let $i \le \theta$. 
	If $p \in \p_i$ and $\gamma < i$, then there exists $q \le p$ 
	such that $\gamma \in \dom(p)$.
\end{lemma}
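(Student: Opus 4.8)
The plan is to prove Lemma 6.3 by induction on $i \le \theta$, following the standard structure for countable support iterations. The statement says that for any condition $p \in \p_i$ and any $\gamma < i$, we can extend $p$ to some $q \le p$ with $\gamma \in \dom(p)$; presumably this should read $\gamma \in \dom(q)$, i.e., we can arrange that the coordinate $\gamma$ is active in the extension. This is a density fact about the support of conditions in the iteration.

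First I would set up the induction. For $i = 0$ the statement is vacuous since there is no $\gamma < 0$. For the successor case $i = j+1$, given $p \in \p_{j+1}$ and $\gamma < j+1$, either $\gamma < j$, in which case I apply the inductive hypothesis to $p \res j \in \p_j$ to obtain an extension adding $\gamma$ to the support, and then reattach the top coordinate; or $\gamma = j$, in which case I need to extend $p$ so that $j \in \dom(q)$. In the latter case, since $\dot\q_j$ is forced to be $\q(\dot T, \dot H, \dot g_j)$, which is nonempty (by Lemma 5.2(1), $\emptyset$ is always a condition), I can use the fact that $p \res j$ forces $\dot\q_j$ to have a condition below whatever $p(j)$ specifies (or simply that $\emptyset$ is in the generic poset) to find a name $\dot r$ with $p \res j \Vdash \dot r \in \dot\q_j$, and set $q = (p\res j)^\frown \dot r$. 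The key point is that the component forcings are nontrivial and always have conditions, so any coordinate can be activated.

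For the limit case, where $i$ is a limit ordinal and the iteration has countable support, given $p \in \p_i$ and $\gamma < i$, I would again split: if $\gamma$ is already in $\dom(p)$ there is nothing to do, and otherwise I extend $p$ at coordinate $\gamma$ exactly as in the successor case, using that $p \res \gamma$ forces the $\gamma$-th iterand to be nonempty. Formally I would define $q$ by letting $q \res \gamma$ be an extension of $p \res \gamma$ (obtained from the inductive hypothesis applied at stage $\gamma$ if needed, though here it suffices to keep $p \res \gamma$ and add the single coordinate), set $q(\gamma)$ to be a name for a condition in $\dot\q_\gamma$ below $p(\gamma)$, and let $q$ agree with $p$ everywhere above $\gamma$. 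Since $\dom(p)$ is countable, adding the single point $\gamma$ keeps the support countable, so $q \in \p_i$.

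The main obstacle is the bookkeeping of countable support iteration conditions: verifying that the extended object $q$ genuinely lies in $\p_i$, in particular that $q \res j \Vdash q(j) \in \dot\q_j$ holds at every coordinate $j$ and that the support remains countable. This is where the nontriviality of the iterands (Lemma 5.2(1), that $\emptyset$ always belongs to $\q(T,H,g)$) does the essential work, ensuring there is always a condition to insert at the new coordinate. Everything else is routine manipulation of names and restrictions in the iteration, so I would keep that part brief and emphasize the single-coordinate extension step.
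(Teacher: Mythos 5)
Your proposal is correct and hinges on the same point as the paper's proof: coordinate $\gamma$ can be activated simply by inserting a name for the empty set, which is a condition in the iterand (Lemma 5.2(1)), keeping the support countable and giving $q \le p$ with $\gamma \in \dom(q)$ (the statement's ``$\gamma \in \dom(p)$'' is indeed a typo). The paper does this in a single step with no induction --- define $q$ by $\dom(q) = \dom(p) \cup \{\gamma\}$, $q \res \dom(p) = p$, and $q(\gamma)$ a $\p_\gamma$-name for the empty set --- so your induction on $i$ and the successor/limit case split are superfluous scaffolding around the same one-line argument.
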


\begin{proof}
	If $\gamma \in \dom(p)$ then we are done, so assume not. 
	Define a function $q$ so that $\dom(q) = \dom(p) \cup \{ \gamma \}$, 
	$q \res \dom(p) = p$, and $q(\gamma)$ is a $\p_\gamma$-name for the empty-set.
\end{proof}

\begin{definition}
	For each $0 < i \le \theta$, let $D_i$ be the set of $p \in \p_i$ such that there exists 
	a function $p'$ satisfying the following: 
	\begin{enumerate}
		\item $\dom(p) = \dom(p')$;
		\item $0 \in \dom(p')$;
		\item $p'(0) = (t_p,h_p) \in \p$;
		\item for all $\alpha \in \dom(p) \setminus \{ 0 \}$, 
		$p'(\alpha)$ is in $\text{top}(t_p)$;
		\item for all $\alpha \in \dom(p)$, 
		$p \res \alpha \Vdash_\alpha p(\alpha) = p'(\alpha)$.
	\end{enumerate}
\end{definition}

Note that the function $p'$ above is unique. 
We adopt the same notational convention about functions $p'$ for $p \in D_i$ as described 
in the comment after Definition 2.3 for conditions in $\p^*$. 
For any $p \in D_i$, we write $\delta_{t_p}$ as $\delta_p$.

The next two lemmas are easy.

\begin{lemma}
	The set $D_1$ has size $2^\omega$ and for all $1 < \tau \le \theta$, 
	$D_\tau$ has size $|\tau|^\omega$.
\end{lemma}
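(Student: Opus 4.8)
The plan is to count the conditions in $D_\tau$ by counting their decodings $p'$, which are purely ground-model combinatorial objects, and then to supply a matching lower bound by exhibiting enough distinct conditions.

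First I would verify that the map $p \mapsto p'$ sending each condition in $D_\tau$ to its unique decoding is injective, so that $|D_\tau|$ equals the number of admissible decodings. The key point is that, since $\p_\alpha$ adds no reals for every $\alpha < \theta$ (Definition 6.1(2)), each value $p'(\alpha) \in \text{top}(t_p) \subseteq {}^{\delta_p}2$ is a ground-model object, whence the coordinate $p(\alpha)$ is the canonical name for $p'(\alpha)$ below $p \res \alpha$. Proceeding by induction on $\alpha \in \dom(p)$, condition (5) of Definition 6.5, namely $p \res \alpha \Vdash_\alpha p(\alpha) = p'(\alpha)$, then determines $p(\alpha)$ from $p'$, so $p' = q'$ forces $p = q$.

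Next I would count the decodings. A decoding $p'$ consists of a domain $\dom(p')$, a countable subset of $\tau$ containing $0$; a value $p'(0) = (t_p,h_p) \in \p$; and, for each $\alpha \in \dom(p')\setminus\{0\}$, a value $p'(\alpha) \in \text{top}(t_p)$. For infinite $\tau$ there are $|\tau|^\omega$ countable subsets of $\tau$ (and only finitely many when $\tau$ is finite); there are $|\p| = 2^\omega$ choices for $p'(0)$; and once $t_p$ is fixed, $\text{top}(t_p)$ is countable, so the remaining data is a function from a countable set into a countable set, contributing at most $\omega^\omega = 2^\omega$. Multiplying, the number of decodings is at most $|\tau|^\omega \cdot 2^\omega$. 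For $\tau = 1$ only the choice of $p'(0)$ survives, giving exactly $2^\omega$, whereas for $1 < \tau$ one has $|\tau|^\omega \cdot 2^\omega = |\tau|^\omega$, using $|\tau|^\omega \ge \omega^\omega = 2^\omega$ together with the fact that $n^\omega = 2^\omega$ for finite $n \ge 2$. This gives the stated upper bounds.

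Finally I would establish the lower bounds. For $\tau = 1$, the conditions with domain $\{0\}$ and $p(0)$ the canonical name of an element of $\p$ already yield $2^\omega$ distinct conditions. For $1 < \tau$ infinite, fixing a single $(t,h) \in \p$ with $\text{top}(t) \ne \emptyset$ and a node $w \in \text{top}(t)$, and setting $p'(\alpha) = w$ for all nonzero $\alpha$, produces for each countable $A \subseteq \tau$ with $0 \in A$ a genuine condition in $D_\tau$ with domain $A$; distinct $A$ give distinct conditions, and there are $|\tau|^\omega$ such $A$. For finite $\tau > 1$, fixing the domain $\{0,1\}$ and varying $(t,h) \in \p$ already yields $2^\omega = |\tau|^\omega$ distinct conditions. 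I expect the only genuine obstacle to be the injectivity step of the first paragraph: one must take care that the count is of the set $D_\tau$ itself rather than merely of the decodings, which hinges on the no-new-reals property forcing each coordinate to be a canonical name determined by $p'$; the remainder is the routine cardinal arithmetic above.
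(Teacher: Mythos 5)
Your strategy---count the decodings $p'$, then match with a lower bound---is the natural one and is surely what the paper has in mind in omitting the proof, and your cardinal arithmetic is correct; but both non-routine steps have genuine gaps. The injectivity step, which you rightly single out as the crux, does not hold as you argue it. Condition (5) of Definition 6.4 says only that $p \res \alpha \Vdash_\alpha p(\alpha) = p'(\alpha)$: this pins down the \emph{evaluation} of the name $p(\alpha)$ in generic extensions containing $p \res \alpha$, not the name itself. Nothing in Definition 6.4 requires $p(\alpha)$ to be the check name of $p'(\alpha)$, and the no-new-reals hypothesis is beside the point ($p'(\alpha)$ is a ground-model object by definition; the issue is which names denote it). Distinct names that $p \res \alpha$ forces to equal $p'(\alpha)$---for instance, names obtained by mixing $\check{p'(\alpha)}$ with other values below conditions incompatible with $p \res \alpha$---yield distinct elements of $D_\tau$ with the same decoding, so $p \mapsto p'$ is not injective and your upper bound is not established; indeed, the literal cardinality of $D_\tau$ depends on exactly which names $\p_\tau$ is taken to consist of, which the paper leaves implicit. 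The repair is to count decodings and to read the lemma accordingly: either identify conditions having the same decoding (such conditions are forcing-equivalent, by induction on coordinates), or restrict attention to conditions whose coordinates are check names, as in the constructions of Lemmas 6.7 and 6.9. This is also all that the later applications need, since the chain-condition argument of Lemma 6.10 and the bookkeeping in Theorem 7.1 refer only to the values $p'(\alpha)$.

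Your lower-bound witnesses also need not be conditions. If you set $p(\alpha) = \check{w}$ for an arbitrary $w \in \text{top}(t)$, then for $p$ to lie in $\p_\tau$ at all, $p \res \alpha$ must force $w \in \dot \q_\alpha = \q(\dot T, \dot H, \dot g_\alpha)$, that is, must force $\dot H(w) = h(w) \subseteq \dot g_\alpha$ (Definition 5.1); nothing in your condition decides $\dot g_\alpha$, so this fails whenever $h(w) \ne \emptyset$. The fix is to choose $w \in \text{top}(t)$ with $h(w) = \emptyset$, which exists by Definition 4.1(2a) together with Definition 4.1(3) applied to $x = \emptyset$; then $h(w) \subseteq \dot g_\alpha$ is forced trivially and your count of domains goes through. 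The same repair is needed in your finite-$\tau$ case, where $p(1)$ must likewise be chosen with $h$-value $\emptyset$.
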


\begin{lemma}
	For all $0 < i < j < \theta$, $D_i \subseteq D_j$, 
	$D_i = \{ p \res i : p \in D_j \}$, and if $p \in D_j$ as witnessed by $p'$, 
	then $p \res i \in D_i$ as witnessed by $p' \res i$. 
\end{lemma}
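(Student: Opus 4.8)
The plan is to observe that the three assertions reduce to two basic facts: that the canonical inclusion $\p_i \subseteq \p_j$ carries $D_i$ into $D_j$, and that the coordinatewise restriction $p \mapsto p \res i$ carries $D_j$ into $D_i$. The set equality $D_i = \{ p \res i : p \in D_j \}$ will then be a purely formal consequence of these two facts. Throughout I would use the standard properties of countable support iterations that $\p_i$ sits inside $\p_j$ as a complete suborder via the map sending a condition to itself (with support unchanged, hence still contained in $i$), and that if $p \in \p_j$ and $\alpha \le j$ then $p \res \alpha \in \p_\alpha$. I would also use the key invariance that for $\alpha < i \le j$ the poset $\p_\alpha$ and its forcing relation $\Vdash_\alpha$ are the same whether computed inside the $i$-th or the $j$-th stage, since the iteration is a single tower.

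First I would prove $D_i \subseteq D_j$. Let $p \in D_i$ be witnessed by $p'$. Since $\dom(p) \subseteq i \subseteq j$, the same function $p$ is a condition in $\p_j$, and I claim that the same $p'$ witnesses $p \in D_j$. Indeed, conditions (1)--(4) of Definition 6.4 refer only to $\dom(p)$ and to the objects $p'(0) = (t_p,h_p) \in \p$ and $p'(\alpha) \in \text{top}(t_p)$, all of which are unchanged. Condition (5) asserts that $p \res \alpha \Vdash_\alpha p(\alpha) = p'(\alpha)$ for $\alpha \in \dom(p)$; since every such $\alpha$ is $< i$, both $p \res \alpha$ and $\Vdash_\alpha$ are computed from data below $i$ and are therefore identical in $\p_j$. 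Hence $p \in D_j$, as witnessed by $p'$.

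Next I would prove the restriction statement, which simultaneously yields the inclusion $\{ p \res i : p \in D_j \} \subseteq D_i$. Let $p \in D_j$ be witnessed by $p'$, set $q = p \res i$ and $q' = p' \res i$, and verify that $q \in \p_i$ (the standard property of restrictions) and that $q'$ witnesses $q \in D_i$. Here $\dom(q) = \dom(p) \cap i = \dom(q')$, giving (1); since $0 < i$ and $0 \in \dom(p')$ we get $0 \in \dom(q')$, giving (2); $q'(0) = p'(0) = (t_p,h_p) \in \p$, giving (3) and identifying $t_q = t_p$; for $\alpha \in \dom(q) \setminus \{ 0 \}$ we have $q'(\alpha) = p'(\alpha) \in \text{top}(t_p) = \text{top}(t_q)$, giving (4); and for $\alpha \in \dom(q)$ we have $\alpha < i$, so $q \res \alpha = p \res \alpha$, $q(\alpha) = p(\alpha)$, and $q'(\alpha) = p'(\alpha)$, whence condition (5) for $q$ at $\alpha$ is exactly condition (5) for $p$ at $\alpha$. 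Thus $q = p \res i \in D_i$ as witnessed by $p' \res i$, which is the content of the final clause.

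Finally I would assemble $D_i = \{ p \res i : p \in D_j \}$. The inclusion $\supseteq$ is the previous paragraph. For $\subseteq$, given $p \in D_i$ we have $\dom(p) \subseteq i$, so $p \res i = p$, and $p \in D_j$ by the first step; thus $p = p \res i$ exhibits $p$ as a restriction of a member of $D_j$. I do not expect any genuine obstacle here, as the argument is entirely bookkeeping. The only point that genuinely requires attention is the invariance remarked at the outset: that the coordinate $0$ lies below $i$ and that $\Vdash_\alpha$ for $\alpha < i$ does not depend on the ambient stage, so that the witnessing function $p'$ together with condition (5) behaves absolutely under both the inclusion $\p_i \subseteq \p_j$ and the restriction $p \mapsto p \res i$.
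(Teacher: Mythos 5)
Your proof is correct and is precisely the routine bookkeeping verification that the paper leaves implicit (it states this lemma without proof, as ``easy''): checking the witness conditions under the inclusion $\p_i \subseteq \p_j$ and under restriction $p \mapsto p \res i$, with the key observation that $\Vdash_\alpha$ for $\alpha < i$ is independent of the ambient stage. Nothing further is needed.
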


\begin{lemma}
	Let $i \le \theta$. 
	Suppose that $\langle p_n : n < \omega \rangle$ is a descending sequence of 
	conditions in $D_i$ with corresponding witnesses $\langle p_n' : n < \omega \rangle$ 
	such that $\delta_{p_n} < \delta_{p_{n+1}}$ for all $n < \omega$.  
	Let $t' = \bigcup \{ t_{p_n} : n < \omega \}$, 
	$h' = \bigcup \{ h_{p_n} : n < \omega \}$, 
	$\delta = \sup \{ \delta_{p_n} : n < \omega \}$, and 
	$X = \bigcup \{ \dom(p_n) : n < \omega \}$. 	
	Suppose that $B$ is any countable set of cofinal branches of $t'$ satisfying 
	that (a) for all $x \in t'$ there exists $b \in B$ such that $x \subseteq b$ and 
	for all $y \in t'$ with $x \subseteq y \subseteq b$, 
	$h'(x) = h'(y)$, and (b) for all $\alpha \in X$, 
	$b_\alpha = \bigcup \{ p_n'(\alpha) : n < \omega \}$ is in $B$.
	Then there exists some $q \in D_i$ such that $\delta_q = \delta$, 
	$\text{top}(t_q) = B$, and $q \le p_n$ for all $n < \omega$.
\end{lemma}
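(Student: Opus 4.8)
We have a descending sequence of conditions in $D_i$ with strictly increasing tree heights, and we form the direct limits $t'$, $h'$, $X$, together with a countable set $B$ of cofinal branches satisfying conditions (a) and (b). We want to package this data into a single condition $q \in D_i$ lying below all the $p_n$. The statement is the analogue for the iteration $\p_i$ of Lemma 4.4 (the "fusion at a limit" lemma for $\p$ itself) and of Lemma 2.7 (the analogous lemma for the product $\p^*$).

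**Overall plan.** First I would build the witness $q'$ for the candidate condition, then show $q'(0)$ is a legitimate element of $\p$ using Lemma 4.4, and finally verify that $q$ (the name-version read off from $q'$) genuinely belongs to $\p_i$, lies below each $p_n$, and is witnessed by $q'$ in the sense of Definition 6.5. Concretely, set the domain of $q'$ to be $X$. For the zeroth coordinate, apply Lemma 4.4 to the decreasing sequence $\langle (t_{p_n}, h_{p_n}) : n < \omega\rangle$ together with the branch set $B$: hypothesis (a) is exactly the hypothesis of Lemma 4.4, so Lemma 4.4 gives a condition $(t,h) \in \p$ with $\delta_t = \delta$, $\text{top}(t) = B$, and $(t,h) \le (t_{p_n}, h_{p_n})$ for all $n$. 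Set $q'(0) = (t,h)$, so that $t_q = t$ and $\delta_q = \delta$. For each $\alpha \in X \setminus \{0\}$, set $q'(\alpha) = b_\alpha = \bigcup\{ p_n'(\alpha) : n < \omega\}$; by hypothesis (b) this is a member of $B = \text{top}(t_q)$, so clause (4) of Definition 6.5 holds.

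**Defining the actual condition and checking membership.** The subtlety is that $q$ must be an element of the iteration $\p_i$, i.e. a function of names, not the already-evaluated object $q'$. I would define $q$ by recursion on $\alpha \in X$ so that $q \res \alpha \in \p_\alpha$ forces $q(\alpha) = q'(\alpha)$: for $\alpha = 0$, let $q(0)$ be a $\p_0$-name for the element $(t,h)$ of $\p$; for $\alpha \in X \setminus \{0\}$, let $q(\alpha)$ be a $\p_\alpha$-name for the element $b_\alpha$ of $T$, which is forced to lie in $\q(\dot T, \dot H, \dot g_\alpha)$ precisely because each $p_n \res \alpha$ forces $p_n'(\alpha) \in \q(\dot T, \dot H, \dot g_\alpha)$, hence forces $H(p_n'(\alpha)) \subseteq \dot g_\alpha$, and $b_\alpha$ is the increasing union of these nodes; by Definition 3.1(2c), $\dot H(b_\alpha) = \bigcup_n \dot H(p_n'(\alpha)) \subseteq \dot g_\alpha$, so $b_\alpha$ is a condition in $\dot\q_\alpha$. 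This is exactly the verification carried out in Lemma 5.5, which I would cite. With these choices clauses (1)–(5) of Definition 6.5 are all met, so $q \in D_i$ as witnessed by $q'$.

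**Verifying $q \le p_n$, and the main obstacle.** It remains to check $q \le_{\p_i} p_n$ for each $n$, which I would prove by induction on $\alpha \in \dom(p_n)$ that $q \res \alpha \le p_n \res \alpha$ in $\p_\alpha$. At $\alpha = 0$, this is the inequality $(t,h) \le (t_{p_n}, h_{p_n})$ supplied by Lemma 4.4. At $\alpha \in X \setminus \{0\}$, assuming $q \res \alpha \le p_n \res \alpha$, I must show $q\res\alpha$ forces $q(\alpha) \le_{\dot\q_\alpha} p_n(\alpha)$, i.e. that $q\res\alpha$ forces $p_n'(\alpha) \le_{\dot T} b_\alpha$; but $b_\alpha = \bigcup_m p_m'(\alpha)$ extends $p_n'(\alpha)$ along the branch because the $p_m'(\alpha)$ form a $\subseteq$-increasing chain (as $\langle p_m\rangle$ is descending), giving the required extension in the tree order. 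The limit stages of the induction are handled by the countable-support structure together with the fact that the already-constructed initial segments cohere. The main obstacle I anticipate is bookkeeping rather than conceptual: one must be careful that the recursively defined names $q(\alpha)$ are genuinely $\p_\alpha$-names and that the forcing statements "$q\res\alpha \Vdash q(\alpha) = q'(\alpha)$" chain together correctly through limit stages of the support, so that $q$ really is a legitimate condition of the countable-support iteration and not merely a function of objects. Everything else reduces to Lemmas 4.4 and 5.5 together with the hypotheses (a) and (b).
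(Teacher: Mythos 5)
Your proposal is correct and follows essentially the same route as the paper: the paper likewise sets $q'(0) = (t,h)$ with $t = t' \cup B$ and $h$ as produced by Lemma 4.4, sets $q'(\alpha) = b_\alpha$ for $\alpha \in X \setminus \{0\}$, lets each $q(\alpha)$ be a $\p_\alpha$-name for $q'(\alpha)$, and cites Lemmas 4.4 and 5.5 for the verification. Your write-up simply spells out the membership and ordering checks that the paper leaves as ``straightforward.''
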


\begin{proof}
	Define $t$, $h$, $q'$, and $q$ as follows:
	\begin{itemize}
		\item $t = t' \cup B$;
		\item $h$ has domain $t$, $h \res t' = h'$, and for all 
		$b \in B$, $h(b) = \bigcup \{ h'(b \res \gamma) : \gamma < \delta \}$;
		\item $q'$ has domain $X$, $q'(0) = (t,h)$, 
		and for all $\alpha \in X \setminus \{ 0 \}$, $q'(\alpha) = b_\alpha$;
		\item $q$ has domain $X$, and for all $\alpha \in X$, $q(\alpha)$ is a 
		$\p_\alpha$-name for $q'(\alpha)$.
	\end{itemize}
	Using Lemmas 4.4 and 5.5, it is straightforward to check that $q$ is as required.
\end{proof}

\begin{lemma}
	For all $0 < i \le \theta$, $D_i$ is $\sigma$-closed.
\end{lemma}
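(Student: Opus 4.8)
The plan is to take a descending sequence $\langle p_n : n < \omega \rangle$ in $D_i$ and produce an explicit lower bound in $D_i$, splitting on whether the supremum $\delta = \sup \{ \delta_{p_n} : n < \omega \}$ of the heights is attained. Two reductions are available throughout. First, any $q \in D_i$ with $q \le p_n$ for all $n$ already witnesses $\sigma$-closure. Second, since the sequence is descending, a lower bound for a subsequence is automatically a lower bound for the whole sequence, so I am free to pass to subsequences; in particular, once the heights are strictly increasing I may invoke Lemma 6.7 directly.

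Suppose first that $\delta$ is attained, say $\delta_{p_n} = \delta$ for all $n \ge N$. I would begin by observing that the tree part stabilizes: for $n \ge N$, $t_{p_{n+1}}$ end-extends $t_{p_n}$ and the two have the same height $\delta$, so by the definition of end-extension $t_{p_{n+1}} = t_{p_n}$, whence also $h_{p_{n+1}} = h_{p_n}$ and $p_n(0) = p_N(0)$. The only genuine change among the $p_n$ is the growth of their domains. I would therefore set $X = \bigcup_n \dom(p_n)$ and define the witness $q'$ with domain $X$ by $q'(0) = p_N(0)$ and, for $\alpha \in X \setminus \{ 0 \}$, $q'(\alpha) = p_m'(\alpha)$ for any $m \ge N$ with $\alpha \in \dom(p_m)$; this is well defined since for such $m$ the values $p_m'(\alpha)$ all lie in $\text{top}(t_{p_N})$ and are pairwise related by tree-extension, hence equal. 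Letting $q(\alpha)$ be a $\p_\alpha$-name for $q'(\alpha)$, it is routine to verify against Definition 6.4 that $q \in D_i$ with $\delta_q = \delta$, and that $q \le p_n$ for every $n$: at coordinate $0$ because $q(0) = p_N(0) \le p_n(0)$, and at each $\alpha \in \dom(p_n)$ because $q'(\alpha)$ tree-extends $p_n'(\alpha)$.

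Suppose instead that $\delta$ is a limit of the $\delta_{p_n}$ which is not attained. Then infinitely many of the inequalities $\delta_{p_n} \le \delta_{p_{n+1}}$ are strict, so after thinning I may assume $\delta_{p_n} < \delta_{p_{n+1}}$ for all $n$; the terms are then distinct conditions. I would form $t' = \bigcup_n t_{p_n}$ and $h' = \bigcup_n h_{p_n}$ and construct the countable set $B$ required by Lemma 6.7: for each $x \in t'$, apply Lemma 4.3 to the descending sequence of distinct conditions $\langle (t_{p_n}, h_{p_n}) : n < \omega \rangle$ in $\p$ to obtain a cofinal branch $b_x$ through $t'$ along which $h'$ is constantly $h'(x)$; and for each $\alpha \in X$, let $b_\alpha = \bigcup_n p_n'(\alpha)$, which is a cofinal branch of $t'$ since the $p_n'(\alpha)$ are tree-increasing of heights cofinal in $\delta$. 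Taking $B = \{ b_x : x \in t' \} \cup \{ b_\alpha : \alpha \in X \}$, conditions (a) and (b) of Lemma 6.7 hold by construction, and Lemma 6.7 supplies the desired $q \in D_i$ with $q \le p_n$ for all $n$.

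The genuinely delicate point is not the limit case, which Lemma 6.7 together with Lemma 4.3 handles cleanly, but the attained case. One must notice that a descending sequence of bounded height in $D_i$ need \emph{not} be eventually constant, because new coordinates can keep entering the domain, and then confirm that the candidate $q$ really is an iteration condition lying in $D_i$. The crux is checking that $q \res \alpha$ forces $q(\alpha) = q'(\alpha) \in \dot \q_\alpha$; this follows because each $p_m \res \alpha$ already forces $p_m'(\alpha) \in \dot \q_\alpha$ and $q \res \alpha$ extends $p_m \res \alpha$. I expect the bookkeeping of these forcing-theoretic verifications to be the bulk of the work, while the structural content is exactly the dichotomy above.
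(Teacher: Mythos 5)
Your proof is correct and takes essentially the same approach as the paper's: the same dichotomy (sup of heights attained, so the tree part stabilizes and one takes the union of the domains with the stable branch values, versus heights strictly increasing along a subsequence, handled by Lemmas 4.3 and 6.7). The details you supply --- well-definedness of $q'(\alpha)$ in the attained case and the explicit construction of the set $B$ in the unbounded case --- are precisely what the paper dismisses as routine.
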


\begin{proof}
	Let $\langle p_n : n < \omega \rangle$ be a descending sequence of conditions in $D_i$ 
	with corresponding witnesses $\langle p_n' : n < \omega \rangle$. 

	\emph{Case 1:} There exists an infinite set $A \subseteq \omega$ such that for all 
	$m < n$ in $A$, $\delta_{p_n} < \delta_{p_m}$. 
	Then we are done by Lemmas 4.3 and 6.7. 

	\emph{Case 2:} There exists some $m < \omega$ such that for all $m \le n < \omega$, 
	$p_n'(0) = p_{m}'(0)$. 
	Let $X = \bigcup \{ \dom(p_n) : n < \omega \}$. 
	For each $\alpha \in X \setminus \{ 0 \}$, let $b_\alpha = p_n'(\alpha)$ for 
	some (any) $n$ such that $m \le n < \omega$ and $\alpha \in \dom(p_n)$. 
	Define $q'$ with domain $X$ so that $q'(0) = p_n'(0)$ and $q'(\alpha) = b_\alpha$ 
	for all $\alpha \in X \setminus \{ 0 \}$. 
	Define $q$ with domain $X$ so that for all $\alpha \in X$, 
	$q(\alpha)$ is a $\p_\alpha$-name for $q'(\alpha)$. 
	It is routine to check that $q \in D_i$ as witnessed by $q'$ and $q \le p_n$ 
	for all $n < \omega$.
\end{proof}

\begin{lemma}
	For all $0 < i \le \theta$, $D_i$ is a dense subset of $\p_i$.
\end{lemma}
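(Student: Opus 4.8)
The plan is to prove the statement by induction on $i$, $0 < i \le \theta$, establishing the slightly stronger assertion that for every $p \in \p_i$ and every $\zeta < \omega_1$ there is $q \le p$ with $q \in D_i$ and $\delta_q \ge \zeta$; the extra control over $\delta_q$ is what makes the successor step go through smoothly and is cheap to maintain. By Lemma 6.3 we may always arrange that $0 \in \dom(p)$. The base case $i = 1$ is immediate: $\p_1$ is (isomorphic to) $\p$, every condition already decides its value at $0$ since $\p_0$ is trivial, and Lemma 4.6 lets us push $\delta$ above any prescribed $\zeta$. For a limit $i$ with $\sup(\dom(p)) < i$ (in particular whenever $\cf(i) > \omega$), we choose $j$ with $\sup(\dom(p)) < j < i$, regard $p$ as an element of $\p_j$, apply the induction hypothesis there, and use $D_j \subseteq D_i$ from Lemma 6.6; so the only genuinely new work is at successors and at limits of countable cofinality whose supremum is attained by $\dom(p)$.

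For the successor step $i = \gamma + 1$ (with $1 \le \gamma < \theta$), I would first decide the new coordinate down to a ground model node. Since $\gamma < \theta$, the iterate $\p_\gamma$ adds no reals by Definition 6.1(2), so every node of $\dot T$ (a function from a countable ordinal to $2$) lies in $V$; hence $p \res \gamma$ forces $p(\gamma)$ to be an element of the ground model set ${}^{<\omega_1}2 \cap V$, and a standard predensity argument yields $r_0 \le p \res \gamma$ and a fixed $x_0 \in {}^{<\omega_1}2$ with $r_0 \Vdash p(\gamma) = \check x_0$; note $r_0$ forces $\dot H(x_0) \subseteq \dot g_\gamma$. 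Applying the induction hypothesis to $r_0$ with $\zeta = \dom(x_0)$, I obtain $r \le r_0$ in $D_\gamma$ with $\delta_r \ge \dom(x_0)$, so that $x_0 \in t_r$. Now I would apply Lemma 4.2 to the decided condition $r'(0) = (t_r, h_r)$ to add a single level, obtaining $(t^+, h^+)$ with $\delta_{t^+} = \delta_r + 1$ in which every node below the top has a top-extension of the same $h$-value. This lets me push everything up one level at once: for each $\alpha \in \dom(r) \setminus \{0\}$ pick a top node extending $r'(\alpha)$ with the same $h$-value (hence still $\subseteq \dot g_\alpha$), and pick a top node $z$ extending $x_0$ with $h^+(z) = h^+(x_0) = H(x_0) \subseteq \dot g_\gamma$. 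Taking $q$ with domain $\dom(r) \cup \{\gamma\}$, tree $(t^+,h^+)$ at coordinate $0$, the pushed-up values at the old coordinates, and $q(\gamma) = z$, gives $q \in D_{\gamma+1}$ with $q \le p$ (the inequality at $\gamma$ holds because $x_0 \subseteq z$, which is the $\q(\dot T,\dot H,\dot g_\gamma)$-ordering). Finally, if $\delta_q < \zeta$ one extends further using $\sigma$-closure of $D_{\gamma+1}$ (Lemma 6.8) together with repeated single-level extensions.

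For a limit $i$ with $\cf(i) = \omega$ and $\sup(\dom(p)) = i$, I would run a fusion. Fix $i_n \nearrow i$ cofinal and build a descending sequence $\langle r_n : n < \omega \rangle$ with $r_n \in D_{i_n}$, $r_n \le p \res i_n$, and $\delta_{r_n}$ strictly increasing: given $r_n$, form the condition $s_n \in \p_{i_{n+1}}$ agreeing with $r_n$ on $i_n$ and with $p$ on $[i_n, i_{n+1})$ (so $s_n \le p \res i_{n+1}$), then apply the induction hypothesis at $i_{n+1}$ with a large $\zeta$ to get $r_{n+1} \le s_n$ in $D_{i_{n+1}}$ with $\delta_{r_{n+1}} > \delta_{r_n}$. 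Viewing each $r_n$ inside $D_i$ via Lemma 6.6, I then invoke Lemma 6.7: with $t' = \bigcup_n t_{r_n}$, $h' = \bigcup_n h_{r_n}$, and $X = \bigcup_n \dom(r_n)$, I take $B$ to consist of the constant-$h'$ branches $b_x$ furnished by Lemma 4.3 for each $x \in t'$ together with the coordinate branches $b_\alpha = \bigcup_n r_n'(\alpha)$ for $\alpha \in X$ (these are cofinal branches since $r_n'(\alpha) \subseteq r_{n+1}'(\alpha)$). Lemma 6.7 then produces $q \in D_i$ with $\delta_q = \sup_n \delta_{r_n}$ and $q \le r_n$ for all $n$; since every $\alpha \in \dom(p)$ lies below some $i_n$ and $q \le r_n \le p \res i_n$, one checks coordinatewise that $q \le p$.

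The step I expect to be the main obstacle is the successor step, specifically the coordination in the single application of Lemma 4.2: one must simultaneously lift the just-decided node $x_0$ and the existing top-values at all (possibly infinitely many) coordinates of $\dom(r)$ to the new top level while preserving both the $H(\,\cdot\,) \subseteq \dot g_\alpha$ constraints and the ``every node has a same-value top-extension'' feature of $D$-conditions. The key realizations that make this clean are (i) that no-new-reals below $\theta$ lets $p(\gamma)$ be decided to a genuine ground model node rather than merely a branch, so that only \emph{one} extra level is needed rather than a full fusion, and (ii) that Lemma 4.2 provides same-$h$-value top-extensions uniformly, so the constraints are automatically inherited. The limit case is conceptually routine once Lemma 6.7 is in hand, the only care being the verification that the chosen $B$ meets its hypotheses and that $q \le p$.
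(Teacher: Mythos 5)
Your proof is correct, and its overall skeleton (induction on $i$; trivial base case; reduction to a smaller coordinate when $\dom(p)$ is bounded below a limit $i$; an $\omega$-fusion along a cofinal sequence $i_n \nearrow i$ followed by Lemma 6.7/Lemma 4.3 when $\cf(i)=\omega$) matches the paper's proof -- indeed your limit case just inlines what the paper delegates to Lemma 6.8. The genuine difference is the successor step. The paper, after deciding $p(i_0)$ as a ground-model node $x_0$ exactly as you do, runs a second fusion: it builds an $\omega$-sequence of conditions $v_n \in D_{i_0}$ deciding longer and longer extensions $x_n$ of $x_0$ inside $\q(\dot T,\dot H,\dot g_{i_0})$, and then uses Lemma 6.7 to form a limit condition whose top level contains the branch $b=\bigcup_n x_n$, finally setting $q = v^\frown\langle \check b\rangle$. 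You avoid this entirely: having arranged $x_0 \in t_r$, you apply Lemma 4.2 once to add a single level in which every node has a top-extension with the same $h$-value, lift $x_0$ and all the old coordinate values to that level, and read off the witness directly. This works precisely because of the ``same-$h$-value top extension'' feature (Definition 4.1(3), as it is used throughout the paper, e.g.\ in the proof of Lemma 4.9, with the extension taken \emph{above} the given node), and because the $h$-value constraints $\dot H(\cdot) \subseteq \dot g_\alpha$ are inherited verbatim under same-value lifts; so your step is genuinely simpler -- one application of Lemma 4.2 in place of an $\omega$-recursion plus Lemma 6.7. The cost is that you must carry the strengthened induction hypothesis ``$\delta_q$ can be made $\ge \zeta$'' (or equivalently prove that conditions in $D_{i}$ with large $\delta$ are dense), but the paper needs this fact anyway when it demands $\delta_{v_0} > \dom(x_0)$, so nothing is lost. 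Two cosmetic slips, neither a gap: at the successor step you should invoke Lemma 6.3 to arrange $\gamma \in \dom(p)$ (not just $0 \in \dom(p)$), and in the final fusion you should restrict the coordinate branches $b_\alpha$ to $\alpha \in X \setminus \{0\}$ when forming $B$ (the paper has the same imprecision in Lemma 6.7).
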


\begin{proof}
	We prove the statement by induction on $i \le \theta$. 
	The case that $i = 1$ is trivial, so assume that $i > 1$.
	
	\emph{Case 1:} $i$ is a successor ordinal.  
	Let $i = i_0 + 1$. 
	By the inductive hypothesis, $D_{i_0}$ is dense in $\p_{i_0}$. 
	Let $p \in \p_i$. 
	By extending further if necessary, we may assume that $i_0 \in \dom(p)$.
	
	Define by induction sequences $\langle v_n : n < \omega \rangle$, 
	$\langle v_n' : n < \omega \rangle$, 
	$\langle \delta_n : n < \omega \rangle$, 
	and $\langle x_n : n < \omega \rangle$ as follows. 
	Extend $p \res i_0$ to some $v_0$ in $D_{i_0}$ which decides 
	$p(i_0)$ as some $x_0$, where $\delta_0 = \delta_{v_0} > \dom(x_0)$. 
	Let $v_0'$ be the witness that $v_0 \in D_{i_0}$. 
	Now let $n < \omega$ and suppose we have defined $v_n$, $v_n'$, $\delta_n$, and $x_n$, 
	where $v_n \in D_{i_0}$ with witness $v_n'$, 
	$\delta_n = \delta_{v_n}$, and $\dom(x_n) \subseteq \delta_n$. 
	Fix a $\p_{i_0}$-name $\dot y$ for an extension of $x_n$ in $\q(\dot T,\dot H,\dot g_{i_0})$ 
	with $\dom(\dot y) = \delta_n$. 
	Fix $v_{n+1} \le v_n$ in $D_{i_0}$ and some $x_{n+1}$ such that 
	$v_{n+1}$ forces that $\dot y = x_{n+1}$, 
	where $\delta_{n+1} = \delta_{v_{n+1}} > \delta_n$.
	Let $v_{n+1}'$ be the witness that $v_{n+1} \in D_{i_0}$.
	
	Let $t' = \bigcup \{ t_{v_n} : n < \omega \}$, 
	$h' = \bigcup \{ h_{v_n} : n < \omega \}$, 
	$\delta = \sup \{ \delta_n : n < \omega \}$, and 
	$X = \bigcup \{ \dom(v_n) : n < \omega \}$. 
	For each $x \in t'$, apply Lemma 4.3 to fix a cofinal branch $b_x$ of $t'$ 
	such that $x \subseteq b_x$ and 
	for all $y \in t'$ with $x \subseteq y \subseteq b_x$, 
	$h'(x) = h'(y)$. 
	For each $\alpha \in X \setminus \{ 0 \}$ define 
	$b_\alpha = \bigcup \{ v_n'(\alpha) : n < \omega \}$. 
	Define $b = \bigcup \{ x_n : n < \omega \}$. 
	Finally, define $B = \{ b_x : x \in t' \} \cup 
	\{ b_\alpha : \alpha \in X \setminus \{ 0 \} \} \cup \{ b \}$. 
	By Lemma 6.7 applied to $D_{i_0}$, fix some 
	$v \in D_{i_0}$ such that $t_{v} = t' \cup B$, 
	$\delta_{v} = \delta$, and  
	$v \le v_n$ for all $n < \omega$. 
	Now let $q = v^\frown \langle \check b \rangle$. 
	It is routine to check that $q \in D_i$ and $q \le p$.

	\emph{Case 2:} $i$ is a limit ordinal. 
	By the inductive hypothesis, 
	for all $j < i$, $D_j$ is dense in $\p_j$. 
	First, assume that $i$ has uncountable cofinality. 
	Consider $p \in \p_i$. 
	Then for some $\zeta < i$, $p \in \p_\zeta$. 
	Applying the inductive hypothesis, fix $q \le_\zeta p$ in $D_\zeta$. 
	Then $q \le_i p$ and $q \in D_i$.
	
	Secondly, assume that $i$ has cofinality $\omega$. 
	Fix an increasing sequence $\langle i_n : n < \omega \rangle$ of ordinals 
	which is 
	cofinal in $i$. 
	Let $p \in \p_i$. 
	Define by induction a descending 
	sequence $\langle v_n : n < \omega \rangle$ of conditions in $D_i$ as follows. 
	Let $v_0 \le p \res i_0$ be in $D_{i_0}$. 
	Now let $n < \omega$ and suppose that $v_n$ is defined, is in $D_{i_n}$, and 
	is below $p \res i_n$. 
	Then $v_n \cup p \res [i_n,i_{n+1})$ is in $\p_{i_{n+1}}$, 
	so we can find some $v_{n+1} \in D_{i_{n+1}}$ which extends it. 
	This completes the construction. 
	By Lemma 6.8, there exists some $q \in D_i$ such that $q \le v_n$ for all $n < \omega$. 
	Then easily $q \le p$.
\end{proof}

\begin{lemma}
	The forcing $\p_\theta$ is $(2^\omega)^+$-c.c.
\end{lemma}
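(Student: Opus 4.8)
The plan is to run a $\Delta$-system argument on the dense set $D_\theta$, exploiting the explicit description of its conditions in Definition 6.4 and the fact that $\p$ has size $2^\omega$. Write $\mu = 2^\omega$. Since $D_\theta$ is dense in $\p_\theta$ by Lemma 6.9, it suffices to show that $D_\theta$ has no antichain of size $\mu^+$: given an antichain $A \subseteq \p_\theta$ of size $\mu^+$, choosing for each $a \in A$ some $a^* \le a$ in $D_\theta$ produces an antichain of $D_\theta$ of the same cardinality, since a common extension of $a_1^*$ and $a_2^*$ would extend both $a_1$ and $a_2$. So I would suppose toward a contradiction that $A \subseteq D_\theta$ is an antichain with $|A| = \mu^+$, and let $p'$ denote the witness for each $p \in A$ as in Definition 6.4.

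Next I would thin $A$ twice. First apply the $\Delta$-system lemma to the family $\{ \dom(p') : p \in A \}$ of countable subsets of $\theta$; this is legitimate because $\mu^+$ is regular and, for every $\alpha < \mu^+$, we have $|\alpha|^\omega \le \mu^\omega = (2^\omega)^\omega = 2^\omega = \mu < \mu^+$ (no cardinal arithmetic beyond the identity $(2^\omega)^\omega = 2^\omega$ is needed). This yields $A_1 \subseteq A$ with $|A_1| = \mu^+$ and a countable root $R \subseteq \theta$ such that $\dom(p') \cap \dom(q') = R$ for all distinct $p, q \in A_1$; since $0 \in \dom(p')$ for every $p$, we have $0 \in R$. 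Secondly I would stabilize $p' \res R$. The number of its possible values is at most $\mu$, since the first coordinate $p'(0) = (t_p,h_p)$ ranges over $\p$, which has size $\mu$, and given it, each $p'(\alpha)$ for $\alpha \in R \setminus \{0\}$ lies in the countable set $\text{top}(t_p)$, contributing at most $\aleph_0^{\aleph_0} = \mu$ further possibilities. By the regularity of $\mu^+$ I can pass to $A_2 \subseteq A_1$ of size $\mu^+$ on which $p' \res R$ is constant; in particular all conditions in $A_2$ share a single first coordinate $(t_p,h_p) = (t^*,h^*)$ and agree on the values $p'(\alpha)$ for $\alpha \in R$.

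Finally I would derive a contradiction by showing that any two distinct $p, q \in A_2$ are compatible. Because $\dom(p') \setminus R$ and $\dom(q') \setminus R$ are disjoint while $p'$ and $q'$ agree on $R$, the amalgamated function $r'$ with domain $\dom(p') \cup \dom(q')$ defined by $r'(0) = (t^*,h^*)$, $r'(\alpha) = p'(\alpha)$ for $\alpha \in \dom(p') \setminus \{0\}$, and $r'(\alpha) = q'(\alpha)$ for $\alpha \in \dom(q') \setminus \{0\}$ is well defined, and every $r'(\alpha)$ with $\alpha \ne 0$ lies in $\text{top}(t^*)$. Letting $r$ be the corresponding condition (with $r(\alpha)$ a $\p_\alpha$-name for $r'(\alpha)$), I would verify by induction on $\alpha \le \theta$ that $r \res \alpha \in \p_\alpha$ and $r \res \alpha \le p \res \alpha$ and $r \res \alpha \le q \res \alpha$. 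At coordinate $\alpha$, since the first coordinate is literally unchanged and $r \res \alpha$ is already below $p \res \alpha$ and $q \res \alpha$ by the inductive hypothesis, $r \res \alpha$ inherits from $p \res \alpha$ (or $q \res \alpha$) the fact that $r'(\alpha)$ is forced to be a member of $\dot\q_\alpha = \q(\dot T,\dot H,\dot g_\alpha)$, and reflexivity of the tree order then gives $r \res (\alpha+1) \le p \res (\alpha+1)$ and $r \res (\alpha+1) \le q \res(\alpha+1)$; limit stages are handled by the countable support $\dom(p') \cup \dom(q')$ of $r$ together with the inductive hypothesis. Thus $r \le p$ and $r \le q$, contradicting that $A$ is an antichain.

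The main obstacle is precisely this last coordinate-by-coordinate verification: one must ensure that the amalgamated condition really forces each transplanted node $p'(\alpha)$ or $q'(\alpha)$ to remain in the corresponding copy of $\q(\dot T,\dot H,\dot g_\alpha)$. This is where stabilizing the \emph{entire} first coordinate to the single value $(t^*,h^*)$, rather than merely to pairwise compatible trees, is essential: it removes any need to re-embed these nodes into a larger tree, so the nodes and the labelling function agree on the nose and the induction reduces to routine bookkeeping about the inverse-limit ordering of the countable support iteration.
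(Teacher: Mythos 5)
Your proof is correct and follows exactly the route the paper sketches: reduce to the dense set $D_\theta$, apply the $\Delta$-system lemma to the (countable) domains of the witnesses $p'$, use the fact that each coordinate has only $2^\omega$-many possible values to stabilize the root, and amalgamate two conditions agreeing on the root into a common extension. The paper compresses all of this into the phrase ``a standard application of the $\Delta$-system lemma applied to the domains of conditions,'' so your write-up is simply the detailed version of the same argument.
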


\begin{proof}
	Since $D_\theta$ is dense in $\p_\theta$, it suffices to prove that $D_\theta$ 
	is $(2^\omega)^+$-c.c. 
	Note that for any $p \in D_\theta$ and $\alpha \in \dom(p)$, there are $2^\omega$-many 
	possibilities for $p'(\alpha)$. 
	Using this fact, the chain condition follows 
	by a standard application of the $\Delta$-system lemma applied to the 
	domains of conditions.
\end{proof}

Theorem 6.2 now follows from Lemmas 6.5, 6.8, 6.9, and 6.10.

\begin{definition}
	For each $0 < i < \theta$, let $\dot d_i$ be a $\p_\theta$-name for 
	$\bigcup G_{\dot \q_i}$, where $G_{\dot \q_i}$ is a $\p_\theta$-name for 
	the generic filter on $\dot \q_i$ 
	induced by the generic filter on $\p_\theta$. 
	Let $\dot B_\theta$ be a $\p_\theta$-name for 
	the collection $\{ \dot d_i : 0 < i < \theta \}$.
\end{definition}

\begin{proposition}
	The forcing $\p_\theta$ forces that $[\dot T] = \dot B_\theta$.
\end{proposition}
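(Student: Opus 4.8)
The plan is to prove the two inclusions $\dot B_\theta \subseteq [\dot T]$ and $[\dot T] \subseteq \dot B_\theta$ separately, the first being routine and the second requiring a diagonalization at a limit stage.

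For $\dot B_\theta \subseteq [\dot T]$, fix $0 < i < \theta$ and work in $V^{\p_i}$. By Lemma 3.2 the relation $\mathcal R(\dot T,\dot H)$ still holds there, since $\p_i$ adds no reals, so $\dot \q_i = \q(\dot T,\dot H,\dot g_i)$ is the forcing of Definition 5.1 and Lemma 5.4 applies to its generic filter. The filter $G_{\dot \q_i}$ induced by the $\p_\theta$-generic is generic over $V^{\p_i}$, so $\dot d_i = \bigcup G_{\dot \q_i}$ is a cofinal branch of $\dot T$ already in $V^{\p_{i+1}}$. Since $\p_\theta$ adds no reals and preserves $\omega_1$ (Theorem 6.2), $\dot T$ has height $\omega_1$ in the full extension and $\dot d_i$ remains a cofinal branch there; hence $\dot d_i \in [\dot T]$.

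For the reverse inclusion I would argue by contradiction using the density of $D_\theta$. Suppose some $p \in D_\theta$ forces that $\dot b$ is a cofinal branch of $\dot T$ with $\dot b \ne \dot d_i$ for every $0 < i < \theta$. Using that $D_\theta$ is $\sigma$-closed (Lemma 6.8), adds no reals (Theorem 6.2), and that the tree part of a condition is a countable object, I would build a descending sequence $\langle p_n : n < \omega \rangle$ in $D_\theta$ below $p$ with $\delta_{p_n}$ strictly increasing and with witnesses $p_n'$, arranging simultaneously that: (i) $p_n$ decides $\dot b \res \delta_{p_n}$ to be a specific node $c_n \in \text{top}(t_{p_n})$; (ii) between consecutive stages I apply Lemma 4.2 so that every node of $t_{p_n}$ splits into two nodes with equal $H$-value; and (iii) by a dovetailing bookkeeping over $X = \bigcup_n \dom(p_n)$, using that $p$ forces $\dot b \ne \dot d_\alpha$, each index $\alpha \in X$ is eventually handled so that $c_n$ and $p_n'(\alpha) = \dot d_\alpha \res \delta_{p_n}$ differ at some level below $\delta_{p_n}$. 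New indices are introduced into the domain as needed via Lemma 6.3.

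Setting $t' = \bigcup_n t_{p_n}$, $h' = \bigcup_n h_{p_n}$, and $\delta = \sup_n \delta_{p_n}$, the node sequence $\beta = \bigcup_n c_n$ is a cofinal branch of $t'$ with $\beta = \dot b \res \delta$ forced, and by (iii) $\beta \ne b_\alpha := \bigcup_n p_n'(\alpha)$ for every $\alpha \in X$. I would then invoke Lemma 6.7 with a top level $B$ chosen to exclude $\beta$: namely, $B$ consists of all the tracked branches $b_\alpha$, together with, for each $x \in t'$, an $H$-constant cofinal branch through $x$ distinct from $\beta$ (Lemma 4.3 provides such a branch, and the full splitting from (ii) guarantees one can be chosen $\ne \beta$). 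Since $\beta \ne b_\alpha$ for all $\alpha \in X$, this $B$ satisfies conditions (a) and (b) of Lemma 6.7 while omitting $\beta$. The resulting $q \in D_\theta$ has $\text{top}(t_q) = B$ and $q \le p_n$ for all $n$, so $q$ forces $\dot b \res \delta = \beta \notin T_\delta$, contradicting that $\dot b$ is a cofinal branch. The main obstacle is precisely this limit step: one must guarantee that the offending branch $\beta$ can be kept out of $B$, which is why the construction must both separate $\beta$ from every tracked branch (the bookkeeping in (iii)) and create enough splitting (step (ii)) so that the normality requirement of Lemma 6.7 can be met without reusing $\beta$.
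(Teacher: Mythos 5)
Your proposal is correct and takes essentially the same approach as the paper's proof: assuming some condition forces $\dot b$ to be a cofinal branch outside $\dot B_\theta$, you build a fusion sequence in $D_\theta$ that (a) decides longer and longer initial segments of $\dot b$, (b) forces $\dot b$ to diverge from every tracked branch $\dot d_\alpha$ with $\alpha$ in the accumulated domain, and (c) interleaves the splitting of Lemma 4.2 so that, via Lemma 4.3, the top level $B$ required by Lemma 6.7 can be assembled while omitting $\dot b \res \delta$, yielding the same contradiction with cofinality of $\dot b$. The only cosmetic differences are that you dovetail the divergence requirements one index at a time where the paper handles all of $\dom(p_n)$ at each step, and you have each $p_n$ decide $\dot b$ up to its own height $\delta_{p_n}$ where the paper staggers the decision (deciding $\dot b\res\zeta$ for $\zeta$ below the next condition's height), neither of which changes the argument.
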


\begin{proof}
	By Lemma 5.4, $\p_\theta$ forces that $\dot B_\theta \subseteq [\dot T]$. 
	Suppose for a contradiction that $p \in \p_\theta$ and $p$ forces that 
	$\dot b$ is a cofinal branch of $\dot T$ which is not in $\dot B_\theta$. 
	Define by induction a decreasing sequence of conditions 
	$\langle p_n : n < \omega \rangle$ in $D_\theta$, an increasing sequence 
	of ordinals $\langle \delta_n : n < \omega \rangle$, and a sequence 
	$\langle b_n : n < \omega \rangle$ as follows. 
	Let $p_0$ be some extension of $p$ in $D_\theta$ and define $\delta_0 = \delta_{p_0}$. 

	Let $n < \omega$ and assume that $p_n$ and $\delta_n$ are defined. 
	Find a countable ordinal $\zeta > \delta_n$ and some $p_n^0 \le p_n$ such that $p_n^0$ 
	forces that for all $\alpha \in \dom(p_n)$, 
	$\dot b \res \zeta \ne \dot d_\alpha \res \zeta$. 
	Using the fact that $\p_\theta$ does not add reals, 
	find $p_n^{1} \le p_n^0$ in $D_\theta$ and some $b_n$ such that $p_n^{1}$ forces that 
	$\dot b \res \zeta = b_n$ and $\delta_{p_n^1} \ge \zeta$. 
	Now extend $p_n^1$ to $p_n^2$ so that $p_n^2(0)$ satisfies the property 
	described in Lemma 4.2.  
	Finally, let $p_{n+1}$ be an extension of $p_n^2$ in $D_\theta$ and let 
	$\delta_{n+1} = \delta_{p_{n+1}}$. 
	This completes the induction. 

	Define the following objects:
	\begin{itemize}
		\item $t' = \bigcup \{ t_{p_n} : n < \omega \}$;
		\item $h' = \bigcup \{ h_{p_n} : n < \omega \}$;
		\item $\delta = \sup \{ \delta_n : n < \omega \}$;
		\item $X = \bigcup \{ \dom(p_n) : n < \omega \}$;
		\item $b^* = \bigcup \{ b_n : n < \omega \}$;
		\item for all $\alpha \in X \setminus \{ 0 \}$, 
		$b_\alpha = \bigcup \{ p_n'(\alpha) : n < \omega \}$.
	\end{itemize}
	Consider $x \in t'$. 
	Let $n < \omega$ be least such that $x \in t_{p_n}$. 
	By the choice of $p_n^2$, we can find $x_1 \in t_{p_{n+1}}$ such that 
	$x \subseteq x_1$, $h'(x_1) = h'(x)$, and $x_1 \not \subseteq b^*$. 
	Now apply Lemma 4.3 to find a cofinal branch $b_x$ of $t'$ satisfying that 
	$x_1 \subseteq b_x$ and for all $y \in t'$ with $x_1 \subseteq y \subseteq b_x$, 
	$h'(y) = h'(x_1)$. 
	Note that $b_x \ne b^*$ and for all $y \in t'$ with $x \subseteq y \subseteq b_x$, 
	$h'(y) = h'(x)$. 
	Consider $\alpha \in X \setminus \{ 0 \}$. 
	Let $n < \omega$ be least such that $\alpha \in \dom(p_n)$. 
	Then by the choice of $p_n^0$ and $p_{n}^1$, 
	$p_{n+1}'(\alpha) \res \delta_{n+1} \ne b^* \res \delta_{n+1}$. 
	Consequently, for all $\alpha \in X \setminus \{ 0 \}$, 
	$b_\alpha \ne b^*$.

	Let $B = \{ b_x : x \in t' \} \cup \{ b_\alpha : \alpha \in X \setminus \{ 0 \} \}$. 
	Observe that $b^* \notin B$. 
	Apply Lemma 6.7 to find $q \in D_\theta$ such that 
	$\delta_{q} = \delta$, $\text{top}(t_q) = B$, and 	
	$q \le p_n$ for all $n < \omega$. 
	Then $q \Vdash \dot b \res \delta = b^* \notin B = \text{top}(t_q) = \dot T_\delta$, 
	which is a contradiction to the fact that $q$ forces that $\dot b$ 
	is cofinal in $\dot T$.
\end{proof}

\section{The Main Theorem}

We are now prepared to prove the main theorem of the article.

\begin{thm}
	Assume \textsf{CH}. 
	Let $1 \le \kappa \le 2^{\omega_1}$ be a cardinal. 
	Then there exists a forcing poset which is $\sigma$-closed, $\omega_2$-c.c., 
	and forces that there exists a Kurepa tree $T$, a function $H : T \to {}^{<\omega_1}2$, 
	and a function $F : [T] \to {}^{\omega_1}2$ such that $F$ is $\kappa$-to-one and 
	$\mathcal R^*(T,H,[T],F)$ holds.
\end{thm}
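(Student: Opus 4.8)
The plan is to realize the forcing as a countable support iteration of the kind analyzed in Section 6, whose first step adds $(\dot T,\dot H)$ with $\mathcal R(\dot T,\dot H)$ (Corollary 4.10), and whose later steps $\dot\q_i = \q(\dot T,\dot H,\dot g_i)$ each shoot a cofinal branch $\dot d_i$ of $\dot T$ computing a prescribed $\dot g_i \in {}^{\omega_1}2$ (Lemma 5.4). Working under \textsf{CH}, set $\lambda = 2^{\omega_1}$; by K\"onig's theorem $\cf(\lambda) \ge \omega_2$. I would build a suitable iteration of length $\theta = \lambda$ by recursion, choosing at each stage $0 < i < \theta$ a $\p_i$-name $\dot g_i$ for an element of ${}^{\omega_1}2$ according to a bookkeeping scheme described below. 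The recursion is legitimate precisely because, by Theorem 6.2 applied to the initial segment $\langle \p_j,\dot\q_k : j \le i,\ k < i\rangle$ (which is itself suitable), $\p_i$ adds no reals; hence $\mathcal R(\dot T,\dot H)$ continues to hold in $V^{\p_i}$ by Corollary 4.10 and Lemma 3.2, so $\q(\dot T,\dot H,\dot g_i)$ is legitimately defined. The poset I force with is $D_\theta$, which is $\sigma$-closed by Lemma 6.8, dense in $\p_\theta$ by Lemma 6.9, and $(2^\omega)^+$-c.c.\ by Theorem 6.2; since \textsf{CH} gives $(2^\omega)^+ = \omega_2$, it is $\omega_2$-c.c.

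The heart of the argument is the choice of the names $\dot g_i$, which must guarantee that, in the final extension $V[G]$, the map $i \mapsto g_i := \dot g_i^{G}$ is a surjection from $\theta \setminus \{0\}$ onto $({}^{\omega_1}2)^{V[G]}$ all of whose fibers have size exactly $\kappa$. The key enabling fact is a reflection observation: since $\p_\theta$ is $\omega_2$-c.c.\ and the iteration uses countable supports, any $\p_\theta$-name for an element of ${}^{\omega_1}2$ is decided by $\omega_1$-many antichains, each of size $\le \omega_1$ and consisting of conditions with countable support, so it depends on at most $\omega_1$-many coordinates; as $\cf(\theta) \ge \omega_2$, such a name is (equivalent to) a $\p_i$-name for some $i < \theta$. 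Consequently every $g \in ({}^{\omega_1}2)^{V[G]}$ already belongs to some $V[G_i]$ and has a nice $\p_i$-name available at stage $i$ of the recursion. Using a bookkeeping function of length $\theta$ together with a name recording, at each stage, how many times each value has been realized so far, I would schedule the nice names for elements of ${}^{\omega_1}2$ so that every function is realized, and whenever a value has already been realized $\kappa$ times the current stage is diverted to a value whose current count is still below $\kappa$. Arranging this so that the final fibers are exactly $\kappa$ — rather than merely at least $\kappa$, which is all that naive per-name scheduling gives, since one function has up to $\lambda$ distinct names — is the main obstacle, and it is most delicate when $\kappa$ is finite (e.g.\ $\kappa = 1$, where $i \mapsto g_i$ must be a bijection).

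Granting the bookkeeping, I would finish as follows. In $V[G]$ put $T = T_G$ and $H = H_G$: then $T$ is a normal binary downwards closed subtree of ${}^{<\omega_1}2$ of height $\omega_1$ with countable levels, and $\mathcal R(T,H)$ holds by Corollary 4.10, Lemma 3.2, and the fact that the tail of the iteration adds no reals. Define $F : [T] \to {}^{\omega_1}2$ by $F(b) = \bigcup \{ H(b \res \gamma) : \gamma < \omega_1 \}$. By Proposition 6.12, $[T] = \{ d_i : 0 < i < \theta \}$, and by Lemma 5.4 each $d_i$ is a cofinal branch with $F(d_i) = g_i$, so $F$ is well defined into ${}^{\omega_1}2$; thus $\mathcal R^*(T,H,[T],F)$ holds and $F$ is continuous by Lemma 3.4. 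Surjectivity of $F$ is exactly the surjectivity of $i \mapsto g_i$, and since $|({}^{\omega_1}2)^{V[G]}| \ge \omega_2$ the tree $T$ has at least $\omega_2$ cofinal branches and is therefore Kurepa.

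Finally I would verify that $F$ is $\kappa$-to-one. For fixed $g$, Proposition 6.12 gives $F^{-1}(g) = \{ d_i : g_i = g \}$, so it suffices to see that distinct indices $i < j$ with $g_i = g_j = g$ yield distinct branches. Here $d_j$ is $\q(T,H,g)$-generic over $V[G_j]$, which already contains the branch $d_i$; and it is dense in $\q(T,H,g)$ to force off $d_i$: given a condition $x$ with $H(x) \subseteq g$ lying on $d_i$, Lemma 4.2 (a density fact about $\p$, hence a property of $T_G$) provides two incomparable nodes above $x$ with the same $H$-value $H(x) \subseteq g$, at most one of which lies on $d_i$, so the other is a condition extending $x$ and forcing $d_j \ne d_i$. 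Hence $|F^{-1}(g)| = |\{ i : g_i = g \}| = \kappa$ by the bookkeeping, which completes the proof.
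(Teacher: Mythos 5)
Your proposal is correct and follows essentially the same route as the paper: the same suitable countable-support iteration of length $2^{\omega_1}$, forcing with the dense $\sigma$-closed set $D_\theta$ (Theorem 6.2, with \textsf{CH} giving the $\omega_2$-c.c.), Lemma 3.2 and Corollary 4.10 sustaining the recursion, bookkeeping of nice names for surjectivity, and Lemma 5.4 together with Proposition 6.12 yielding $\mathcal R^*(T,H,[T],F)$. The two points you elaborate beyond the paper's text — the reflection of nice names to initial stages using $\cf(2^{\omega_1}) \ge \omega_2$ and the $\omega_2$-c.c., and the verification that distinct stages targeting the same $g$ yield distinct branches (via the splitting property of Lemma 4.2 transferred to the generic tree) — are precisely the steps the paper compresses into ``standard methods'' and ``note that $\dot F$ is $\kappa$-to-one,'' and your treatment of both is sound.
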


\begin{proof}
	We define by recursion a countable support forcing iteration 
	$\langle \p_i, \dot \q_j : i \le 2^{\omega_1}, \ j < 2^{\omega_1} \rangle$ as follows. 
	Our inductive hypothesis is that for all $\alpha \le 2^{\omega_1}$, 
	$\langle \p_i, \dot \q_j : i \le \alpha, \ j < \alpha \rangle$ is suitable 
	and $\p_\alpha$ does not add reals. 
	Let $\p_0$ be the trivial forcing, let $\dot \q_0$ be a $\p_0$-name for $\p$, 
	and let $\p_1 = \p_0 * \dot \q_0$.
	
	\emph{Successor stage:} Let $1 \le \alpha < 2^{\omega_1}$ and assume that 
	$\langle \p_i, \dot \q_j : i \le \alpha, \ j < \alpha \rangle$ is defined as required. 
	By Lemma 3.2 applied to the quotient forcing 
	$\p_\alpha / \dot G_{\p_1}$ in $V^{\p_1}$, 
	$\p_\alpha$ forces that $\mathcal R(\dot T,\dot H)$ holds. 
	Using a booking device which we describe in more detail later in the proof, 
	we fix a nice $D_\alpha$-name $\dot g_\alpha$ for a function 
	in ${}^{\omega_1}2$. 
	Let $\dot \q_\alpha$ be a $\p_\alpha$-name for $\q(\dot T,\dot H,\dot g_\alpha)$ 
	and let $\p_{\alpha+1} = \p_\alpha * \dot \q_\alpha$. 
	Clearly, $\langle \p_i, \dot \q_j : i \le \alpha+1, j < \alpha+1 \rangle$ is suitable, 
	and $\p_{\alpha+1}$ does not add reals by Theorem 6.2.

	\emph{Limit stage:} Let $\delta \le 2^{\omega_1}$ be a limit ordinal and suppose that for 
	all $i < \delta$, $\p_i$ and $\dot \q_i$ are defined such that for all $\alpha < \delta$, 
	$\langle \p_i, \dot \q_j : i \le \alpha, \ j < \alpha \rangle$ is suitable and 
	$\p_\alpha$ does not add reals. 
	Define $\p_\delta$ as the countable support limit of $\langle \p_i : i < \delta \rangle$. 
	It is easy to check that 
	$\langle \p_i, \dot \q_j : i \le \delta, \ j < \delta \rangle$ is suitable, 
	and $\p_\delta$ does not add reals by Theorem 6.2.
	
	This completes the construction. 
	Let $\p = \p_{2^{\omega_1}}$. 
	By Theorem 6.2 and \textsf{CH}, 
	$D_{2^{\omega_1}}$ has size $2^{\omega_1}$ and $\p$ is $\omega_2$-c.c. 
	So by standard methods 
	we can arrange our bookkeeping device so that every member of 
	${}^{\omega_1}2$ in $V^{\p}$ is named by $\dot g_\alpha$ for exactly $\kappa$-many 
	ordinals $\alpha < 2^{\omega_1}$. 
	Let $\dot F$ be a $\p$-name for the function with domain $\dot B_{2^{\omega_1}}$ 
	such that for all $0 < i < 2^{\omega_1}$, 
	$\dot F(\dot d_i) = \dot g_i$. 
	Note that $\p$ forces that $\dot F$ is $\kappa$-to-one. 
	By Lemma 5.4, $\p$ forces that for all $0 < i < 2^{\omega_1}$, 
	$\dot F(\dot d_i) = \bigcup \{ \dot H(\dot d_i \res \gamma) : \gamma < \omega_1 \}$. 
	Since $\p$ does not add reals, applying Lemma 3.2 to the quotient forcing 
	$\p / \dot G_{\p_1}$ in $V^{\p_1}$, 
	$\p$ forces that $\mathcal R(\dot T,\dot H)$ holds. 
	By Proposition 6.12, $\p$ forces that $[\dot T] = \dot B_{2^{\omega_1}}$. 
	So $\p$ forces that $\mathcal R^*(\dot T,\dot H,[\dot T],\dot F)$ holds.
\end{proof}

\begin{corollary}
	Assume \textsf{CH}. 
	Let $1 \le \kappa \le 2^{\omega_1}$ be a cardinal. 
	Then there exists a forcing poset which does not add reals, is $\omega_2$-c.c., and 
	adds a normal binary Kurepa tree $T$ 
	and a continuous $\kappa$-to-one surjective function $F : [T] \to {}^{\omega_1}2$.
\end{corollary}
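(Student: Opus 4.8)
The plan is to read the corollary off directly from Theorem 7.1 together with the continuity lemma of Section 3, since the corollary is essentially a repackaging of the theorem's conclusion in the language of continuous surjections. First I would apply Theorem 7.1 to the given cardinal $\kappa$ to obtain a forcing poset $\p$ which is $\sigma$-closed, hence does not add reals, and which is $\omega_2$-c.c.; this poset forces the existence of a tree $T$, a function $H : T \to {}^{<\omega_1}2$, and a function $F : [T] \to {}^{\omega_1}2$ such that $F$ is $\kappa$-to-one and $\mathcal{R}^*(T,H,[T],F)$ holds.

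Next I would extract the three features of $F$ and $T$ demanded by the corollary. Continuity of $F : [T] \to {}^{\omega_1}2$ is immediate from Lemma 3.4, applied with $B = [T]$, since $\mathcal{R}^*(T,H,[T],F)$ holds. Surjectivity follows from $F$ being $\kappa$-to-one: as $\kappa \ge 1$, every element of the codomain ${}^{\omega_1}2$ has exactly $\kappa$, and in particular at least one, preimage under $F$, so $F$ maps $[T]$ onto ${}^{\omega_1}2$.

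It then remains to check that $T$ is a normal binary Kurepa tree. That $T$ is a normal binary $\omega_1$-tree which is a downwards closed subtree of ${}^{<\omega_1}2$ is part of the definition of $\mathcal{R}(T,H)$, namely Definition 3.1(1), and $\mathcal{R}(T,H)$ holds as the first clause of $\mathcal{R}^*(T,H,[T],F)$. For the Kurepa property I would invoke surjectivity once more: since $F$ maps $[T]$ onto ${}^{\omega_1}2$ and $|{}^{\omega_1}2| = 2^{\omega_1} \ge \omega_2$, the branch set $[T]$ has cardinality at least $\omega_2$, so $T$ has at least $\omega_2$ cofinal branches and is therefore Kurepa.

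I do not expect any genuine obstacle here; the content of the corollary is entirely carried by Theorem 7.1 and Lemma 3.4. The only points I would make explicit are that $\sigma$-closure yields the ``does not add reals'' clause, that $\kappa$-to-one-ness with $\kappa \ge 1$ gives surjectivity onto the full space ${}^{\omega_1}2$ rather than merely onto a subspace, and that this surjectivity is precisely what forces $[T]$ to have size at least $\omega_2$ and so upgrades the normal binary $\omega_1$-tree $T$ to a Kurepa tree.
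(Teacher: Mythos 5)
Your proposal is correct and follows essentially the same route as the paper, which states this corollary as an immediate consequence of Theorem 7.1 (the forcing there being $\sigma$-closed and $\omega_2$-c.c.) together with the continuity given by Lemma 3.4 applied to $\mathcal R^*(T,H,[T],F)$, with surjectivity read off from $\kappa$-to-one-ness of $F$ onto the codomain ${}^{\omega_1}2$ and normality/binarity coming from Definition 3.1(1). Your extra observation that surjectivity alone re-derives the Kurepa property of $T$ is harmless but unnecessary, since Theorem 7.1 already asserts that $T$ is Kurepa.
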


We note that the forcing of Theorem 7.1 preserves \textsf{CH} because it does 
not add reals, preserves all cardinals because it is $\omega_2$-c.c., and by an easy 
counting argument, does not change the value of $2^{\omega_1}$. 
In particular, the existence of a strong Kurepa tree is consistent with arbitrarily 
large values of $2^{\omega_1}$.

\section{Remarks and Open Problems}

The Kurepa trees we considered in this article are mostly downwards closed binary 
subtrees of $2^{<\omega_1}$. 
This focus does not provide any limitations for the following reason.

\begin{lemma}
	Suppose that $T$ is a Kurepa tree. 
	Then there exists a Kurepa tree $U$ which is a binary downwards closed 
	subtree of ${}^{<\omega_1}2$ such that $[T]$ and $[U]$ are homeomorphic.
\end{lemma}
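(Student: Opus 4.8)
The plan is to code $T$ directly into a binary tree by an order-preserving map $e \colon T \to {}^{<\omega_1}2$, take $U$ to be the downward closure of its range, and show that the induced map on cofinal branches is a homeomorphism. First I would reduce to the case that $T$ is \emph{pruned}, i.e.\ every node lies on some cofinal branch: passing to the subtree of such nodes does not change the set of cofinal branches, hence not $[T]$, and it keeps $T$ an $\omega_1$-tree of height $\omega_1$ with countable levels.

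Next I would define $e$ by recursion on the levels of $T$. Set $e(\text{root}) = \emptyset$. At a successor step, given $e(x)$, enumerate the (countably many, nonempty) immediate successors $\{y_n\}$ of $x$ and put $e(y_n) = e(x){}^\frown c_n$, where $\{c_n\}$ is a fixed prefix-free antichain of nonempty finite binary strings — for instance the codes $0, 10, 110, \dots$, that is $c_n = 1^n{}^\frown 0$ — chosen so that the downward closure of $\{c_n\}$ in ${}^{<\omega}2$ has exactly one infinite branch, the all-ones sequence $1^\omega$, which passes through no $c_n$. At a limit step set $e(x) = \bigcup_{\xi < \dom(x)} e(x \res \xi)$. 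Let $U$ be the downward closure of $e[T]$ inside ${}^{<\omega_1}2$, and define $\Phi \colon [T] \to [U]$ by $\Phi(b) = \bigcup_{\alpha < \omega_1} e(b \res \alpha)$.

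I would then verify the required properties in the following order. \emph{(Tree structure.)} Along any branch the level $\mathrm{lev}(e(x))$ is strictly increasing at successors (codes are nonempty) and continuous at limits, so it is a normal function and the image levels are cofinal in $\omega_1$; moreover each $U_\gamma$ consists of the finitely many finite-code prefixes sitting above the countably many $e$-images of $T$-nodes of level below $\gamma$, so $U$ has countable levels and height $\omega_1$. \emph{(Injectivity.)} Distinct immediate successors receive incomparable prefix-free codes, so $e$ preserves and reflects $\sqsubseteq$, whence $\Phi$ is injective. \emph{(Surjectivity.)} Given a cofinal $c \in [U]$, put $b = \{ x \in T : e(x) \sqsubseteq c \}$; the point is that above each $e(x)$ the tree $U$ is exactly the code tree glued at $e(x)$, and its only infinite path is the phantom $1^\omega$. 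Since $e(x){}^\frown 1^\omega$ extends no $c_n$, it lies below no later $e$-image and hence is \emph{not} in $U$ — here I use that a downward-closed subtree of ${}^{<\omega_1}2$ need not be closed under limits — so $c$ cannot follow the phantom and must reach some $e(y_n)$; at limit stages $c \res \gamma \in U$ forces a genuine limit node of $T$ onto $b$. Thus $b$ is a cofinal branch with $\Phi(b) = c$. \emph{(Homeomorphism.)} Finally $\Phi$ carries each basic clopen set $N^T_x = \{ b : x \in b \}$ onto a clopen subset of $[U]$ (a single cone $N^U_{e(x)}$, or a countable clopen union of cones in the degenerate case below) and pulls back each cone $N^U_u$ to $\bigcup \{ N^T_x : u \sqsubseteq e(x) \}$, so it is continuous and open; being a bijection, it is a homeomorphism, and $|[U]| = |[T]| \ge \omega_2$ makes $U$ a Kurepa tree.

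The main obstacle is the simultaneous management of two infinitary features: compressing \emph{countable} branching into \emph{binary} branching without (a) inflating the levels of $U$ beyond $\aleph_0$ or (b) creating spurious cofinal branches of $U$ at limit heights. Both are controlled by the single device of a prefix-free code whose unique infinite path avoids every code and therefore dies in a genuine gap of the non-limit-closed tree $U$. A secondary subtlety arises only when $T$ is not normal, so that two distinct limit nodes share all their predecessors and are forced to the same image; to keep $\Phi$ bijective with clopen image I would allocate to each such node a disjoint block of codes above their common image, which is a routine refinement of the successor step.
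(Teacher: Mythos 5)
Your proof is correct, and at bottom it implements the same idea as the paper's, but packaged quite differently, so a comparison is worthwhile. The paper proceeds by staged surgeries on the abstract tree --- add a root, insert new limit levels to make the tree Hausdorff, then insert a copy of ${}^{<\omega}2$ between each node and its immediate successors --- with each step preserving the homeomorphism type of the branch space, and it finishes by quoting the fact that a rooted, binary, Hausdorff tree is isomorphic to a downwards closed subtree of ${}^{<\omega_1}2$. Your single recursive embedding $e$ collapses all of this into one construction: the prefix-free codes $c_n = 1^n{}^\frown 0$ are exactly the paper's interpolated copies of ${}^{<\omega}2$; your allocation of disjoint codes to distinct limit nodes sharing all predecessors is exactly the paper's Hausdorffification step, with the old limit nodes pushed to successor positions above a common ``new limit node'' $u$ (and this fix is genuinely needed --- without it $e$ identifies incomparable nodes, and $\Phi$ can fail to be injective --- so it is good that you flagged it); and taking $U$ to be the downward closure of $e[T]$ replaces the paper's final representation step. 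What your version buys is that the paper's two ``easy to show'' claims become explicit and verifiable: the phantom path $1^\omega$ of each inserted code tree dies in $U$ precisely because $U$ is downward closed but not closed under limits, so no spurious cofinal branches appear, and the inequality $\mathrm{lev}(e(x)) \ge \mathrm{ht}_T(x)$ keeps every level of $U$ countable. Two trivial points to tidy: a Kurepa tree may have countably many minimal nodes rather than a single root, so ``$e(\mathrm{root}) = \emptyset$'' should be replaced by coding the level-$0$ nodes with the $c_n$ just as you do for immediate successors (this is the analogue of the paper's ``add a root''); and the initial pruning, while harmless for the topology since nodes lying on no cofinal branch contribute only empty cones, is not actually needed anywhere in your argument.
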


\begin{proof}
	Modify $T$ in several steps as follows. 
	Add a root to $T$ to obtain $T_0$. 
	Insert new limit levels to $T_0$ to obtain $T_1$ which is Hausdorff. 
	Between successive elements of $T_1$ add a copy of ${}^{<\omega}2$ preserving 
	Hausdorffness to obtain $T_2$. 
	It is easy to show that each of these steps preserves the homeomorphism type of 
	the space of cofinal branches. 
	Now $T_2$ is rooted, binary, and Hausdorff. 
	By a straightforward argument, any tree with these three properties is 
	isomorphic to a binary downwards closed subtree of ${}^{<\omega_1}2$.
\end{proof}

The existence of a strong Kurepa tree implies \textsf{CH}. 
Indeed, suppose that $T$ is a Kurepa tree and $F : [T] \to {}^{\omega_1}2$ 
is continuous and surjective. 
Then $T$ has size $\omega_1$, and therefore the cones of $T$ provide a base for $[T]$ 
with size $\omega_1$. 
Assuming that \textsf{CH} is false, 
let $\langle s_i : i < \omega_2 \rangle$ enumerate 
distinct functions in ${}^{\omega}2$, and for each 
$i < \omega_2$ let $U_i$ be the open set $\{ g \in {}^{\omega_1}2 : s_i \subseteq g \}$. 
Then for any $i < j < \omega_2$, $U_i \cap U_j = \emptyset$, and hence 
$F^{-1}(U_i) \cap F^{-1}(U_j) = \emptyset$. 
Since $[T]$ has a base of size $\omega_1$ and $F$ is continuous, there must exist some 
$i < \omega_2$ such that $F^{-1}(U_i)$ is empty. 
So $F$ is not surjective. 
In particular, generic Kurepa trees obtained by standard c.c.c.\ forcing posets 
assuming $\Box_{\omega_1}$ are not strong (\cite{boban}, \cite{todorbook}).

A natural question is whether the main theorem of the article can be improved 
to get a Kurepa tree $T$ such that $[T]$ is homeomorphic to ${}^{\omega_1}2$. 
It cannot by the following theorem of L\"{u}cke and Schlicht (\cite{phillip}).

\begin{thm}
	Assume \textsf{CH}. Let $T$ be a Kurepa tree which is a downwards closed 
	subtree of ${}^{\omega_1} \omega_1$. 
	Then $[T]$ is not the continuous image of ${}^{\omega_1} \omega_1$.
\end{thm}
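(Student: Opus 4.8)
The plan is to argue by contradiction: assuming a continuous surjection $f \colon {}^{\omega_1}\omega_1 \to [T]$, I will build a binary splitting tree of basic open sets in the domain whose images split in $T$ fast enough to force an uncountable level of $T$, contradicting that $T$ is an $\omega_1$-tree. Write $N_s = \{ x \in {}^{\omega_1}\omega_1 : s \subseteq x \}$ for $s \in {}^{<\omega_1}\omega_1$ and $C_t = \{ b \in [T] : t \in b \}$ for $t \in T$ for the basic clopen sets of the two spaces, and call a nonempty clopen $U \subseteq {}^{\omega_1}\omega_1$ \emph{large} if $|f[U]| \ge \omega_2$. Since $T$ is Kurepa, $[T]$ has at least $\omega_2$ branches, so the whole space is large. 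First I would record the one place where \textsf{CH} is used: under \textsf{CH} we have $\omega_1^\omega = \omega_1$, so $|{}^{<\omega_1}\omega_1| = \omega_1$ and hence ${}^{\omega_1}\omega_1$ has weight $\omega_1$; in particular every clopen set is a union of at most $\omega_1$-many basic clopen sets $N_s$. Consequently, if $U$ is large then, since $f[U]$ of size $\ge \omega_2$ cannot be a union of $\omega_1$-many sets each of size $\le \omega_1$, some basic clopen $N_s \subseteq U$ is itself large.

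The heart of the argument is a splitting lemma: if $U$ is large then there are incomparable $u_0, u_1 \in T$ and disjoint large basic clopen sets $N_{s_0}, N_{s_1} \subseteq U$ with $f[N_{s_i}] \subseteq C_{u_i}$. To prove it, set $A = f[U]$ and consider the subtree $T_A = \{ b \res \alpha : b \in A, \ \alpha < \omega_1 \}$, which has countable levels. A counting (``funnelling'') argument shows that the heavy nodes $\{ t \in T_A : |A \cap C_t| \ge \omega_2 \}$ form a downward closed subtree that must branch: if the heavy nodes formed a single chain, then every branch of $A$ not following that chain would pass through one of the at most $\omega_1$-many light nodes, each lying below at most $\omega_1$-many branches of $A$, giving $|A| \le \omega_1$, a contradiction. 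So there is a heavy node with two incomparable heavy successors $u_0, u_1$. Now $f^{-1}(C_{u_i}) \cap U$ is clopen by continuity of $f$ and has image of size $\ge \omega_2$, so by the previous paragraph it contains a large basic clopen $N_{s_i} \subseteq U$; and $N_{s_0}, N_{s_1}$ are disjoint because $C_{u_0} \cap C_{u_1} = \emptyset$. I expect this pull-back step --- producing a \emph{large} basic sub-box inside a committed subcone --- to be the main obstacle, and it is exactly where the clopenness of $f^{-1}(C_{u_i})$ together with \textsf{CH} does the work; a naive attempt to shrink neighborhoods directly fails because $f$ may be locally constant at individual points even though its global image is large.

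Finally I would iterate the splitting lemma $\omega$ times to build $\langle N_{s_\rho}, t_\rho : \rho \in {}^{<\omega}2 \rangle$ with the $N_{s_\rho}$ large and nested along $\subseteq$, with $f[N_{s_\rho}] \subseteq C_{t_\rho}$, and with $t_{\rho{}^\frown 0}$ incomparable with $t_{\rho{}^\frown 1}$, arranging the levels $\dom(t_\rho)$ so that for every $\sigma \in {}^\omega 2$ the ordinals $\dom(t_{\sigma \res n})$ share a common supremum $\lambda < \omega_1$ (possible since each generation $\{ \rho : \lh(\rho) = n \}$ is finite, so the levels can be pushed up uniformly). For each $\sigma \in {}^\omega 2$ choose $x_\sigma \in \bigcap_n N_{s_{\sigma \res n}}$, which is nonempty because the $s_{\sigma \res n}$ are nested with countable supremum of domains. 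Then $f(x_\sigma) \res \lambda = \bigcup_n t_{\sigma \res n}$, and distinct $\sigma$ split at some finite stage and hence yield incomparable, distinct values at level $\lambda$. This produces $2^\omega = \omega_1$ distinct nodes in $T_\lambda$, contradicting that $T$ has countable levels. Note that surjectivity of $f$ is used only to guarantee that $[T]$, and thus the whole domain, is large, while the hypothesis that $T$ is an $\omega_1$-tree is precisely what blocks the embedded perfect set at the limit level $\lambda$ --- mirroring the fact that the full tree ${}^{<\omega_1}2$, whose levels are uncountable, does admit ${}^{\omega_1}2$ as a continuous image of ${}^{\omega_1}\omega_1$.
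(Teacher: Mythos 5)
Your proposal is essentially correct, but there is nothing in the paper to compare it against: the paper does not prove this statement at all — it is Theorem 8.2, quoted from L\"{u}cke and Schlicht (\cite{phillip}) precisely to explain why the Main Theorem cannot be upgraded to a homeomorphism. So your blind attempt supplies a self-contained argument where the paper gives only a citation. The three pillars of your argument all check out: (i) under \textsf{CH} the space ${}^{\omega_1}\omega_1$ has only $\omega_1$-many basic clopen sets, so any open set whose image has size $\ge \omega_2$ contains a basic clopen set with the same property (pigeonhole, since a union of $\omega_1$-many sets of size $\le \omega_1$ has size $\le \omega_1$); (ii) for $A = f[U]$ of size $\ge \omega_2$, the heavy nodes of $T_A$ form a downward-closed subtree that cannot be a chain (a chain carries at most one all-heavy branch, and the $\le \omega_1$-many light nodes carry $\le \omega_1$-many branches), and since each cone $C_{u_i}$ is clopen in $[T]$, its $f$-preimage intersected with $U$ is clopen and large, so (i) extracts the desired large basic clopen sets; (iii) the $\omega$-length fusion then plants $2^\omega \ge \omega_1$ distinct nodes on one countable level of $T$, contradicting that a Kurepa tree has countable levels. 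The one under-justified step is the parenthetical claim that ``the levels can be pushed up uniformly'': finiteness of each generation does not by itself produce the push-up; you need one more run of your own machinery — every level $\gamma \ge \dom(t_\rho)$ of $T_{f[N_{s_\rho}]}$ is countable, covers $f[N_{s_\rho}]$, and consists of nodes extending $t_\rho$, so it contains a heavy node, which you pull back exactly as in (ii). Alternatively, you can delete the uniformity requirement altogether: taking $\lambda = \sup\{\dom(t_\rho) : \rho \in {}^{<\omega}2\}$, which is a countable ordinal, already suffices, since any two $\sigma \ne \sigma'$ in ${}^{\omega}2$ give branches $f(x_\sigma), f(x_{\sigma'})$ passing through incomparable nodes $t_{\sigma \res (n+1)}$, $t_{\sigma' \res (n+1)}$ of level $< \lambda$, hence having distinct restrictions to $\lambda$, and $f(x_\sigma) \res \lambda \in T_\lambda$ by downward closure. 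With that repair (or simplification), the proof is complete.
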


\begin{corollary}
	Let $T$ be a Kurepa tree. 
	Then $[T]$ is not homeomorphic to ${}^{\omega_1}2$.
\end{corollary}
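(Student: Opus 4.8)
The plan is to argue by contradiction and funnel everything into the Lücke--Schlicht theorem (Theorem 8.2). That theorem, however, is only available under \textsf{CH} and only for trees presented as downwards closed subtrees of ${}^{<\omega_1}\omega_1$, so my first two moves are to manufacture both of these features from the bare hypothesis. Suppose $T$ is a Kurepa tree and $\phi : [T] \to {}^{\omega_1}2$ is a homeomorphism. Since $\phi$ is in particular a continuous surjection onto ${}^{\omega_1}2$, the tree $T$ is by definition a strong Kurepa tree. By the observation recorded earlier in this section, the existence of a strong Kurepa tree implies \textsf{CH}; hence \textsf{CH} holds. This single step is where the unqualified statement of the corollary is converted into the \textsf{CH} assumption needed to invoke Theorem 8.2.

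Next I would pass to a convenient representative of the branch space. By Lemma 8.1 I fix a Kurepa tree $U$ which is a binary downwards closed subtree of ${}^{<\omega_1}2$ with $[U]$ homeomorphic to $[T]$, and therefore homeomorphic to ${}^{\omega_1}2$. Since ${}^{<\omega_1}2 \subseteq {}^{<\omega_1}\omega_1$, the tree $U$ is a downwards closed subtree of ${}^{<\omega_1}\omega_1$, so under \textsf{CH} the hypotheses of Theorem 8.2 are met by $U$: in particular $[U]$ is \emph{not} a continuous image of ${}^{\omega_1}\omega_1$. It remains only to contradict this.

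Finally I would exhibit ${}^{\omega_1}2$, and hence $[U]$, as a continuous image of ${}^{\omega_1}\omega_1$. Define $r : {}^{\omega_1}\omega_1 \to {}^{\omega_1}2$ by $r(f)(\alpha) = 0$ if $f(\alpha) = 0$ and $r(f)(\alpha) = 1$ otherwise. Then $r$ is surjective, since it restricts to the identity on ${}^{\omega_1}2$. The one point deserving care --- and the only real obstacle --- is the continuity of $r$ for the cone topologies. The preimage of a basic cone $\{ g : s \subseteq g \}$ with $s \in {}^{<\omega_1}2$ and $\dom(s) = \alpha$ is $\bigcap_{\beta < \alpha} A_\beta$, where $A_\beta = \{ f : f(\beta) = 0 \}$ when $s(\beta) = 0$ and $A_\beta = \{ f : f(\beta) \neq 0 \}$ when $s(\beta) = 1$; each $A_\beta$ is a union of cones and hence open. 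Because $\alpha < \omega_1$ is countable and the basic cones of ${}^{\omega_1}\omega_1$ are closed under countable intersections (a countable union of countable ordinals stays below $\omega_1$), this countable intersection of open sets is again open, so $r$ is continuous. Composing $r$ with a homeomorphism ${}^{\omega_1}2 \to [U]$ then yields a continuous surjection of ${}^{\omega_1}\omega_1$ onto $[U]$, contradicting Theorem 8.2 and completing the proof.
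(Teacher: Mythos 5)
Your proposal is correct and follows essentially the same route as the paper: deduce that $T$ is a strong Kurepa tree (hence \textsf{CH} holds), pass to a binary downwards closed representative via Lemma 8.1, and contradict Theorem 8.2 by composing the homeomorphism with a continuous surjection ${}^{\omega_1}\omega_1 \to {}^{\omega_1}2$. The only difference is that you spell out the map $r$ and its continuity, which the paper dismisses with ``easily''; your verification of that point is sound.
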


\begin{proof}
	If $[T]$ is homeomorphic to ${}^{\omega_1}2$, then it is a strong Kurepa tree. 
	By Lemma 8.1, without loss of generality 
	we may assume that $T$ is a downwards closed subtree of ${}^{<\omega_1}2$.  
	By the remarks above, $\textsf{CH}$ holds. 
	Also, $[T]$ is a continuous image of ${}^{\omega_1}2$. 
	But easily ${}^{\omega_1}2$ is a continuous image of ${}^{\omega_1} \omega_1$, 
	so by composing continuous maps we have a contradiction to Theorem 8.2.
\end{proof}

In the absence of \textsf{CH}, there are still restrictions on the space 
of all cofinal branches of a Kurepa tree being the continuous image of ${}^{\omega_1}2$. 
Recall that a subspace $Y$ of a topological space $X$ is a \emph{retract} of $X$ 
if there exists a continuous function $F : X \to Y$ such that $F(y) = y$ for all $y \in Y$. 

\begin{thm}
	Suppose that $T$ is a Kurepa tree which is a 
	downwards closed subtree of ${}^{<\omega_1}2$. 
	Then $[T]$ is not a retract of ${}^{\omega_1}2$.
\end{thm}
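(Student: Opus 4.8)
The plan is to assume toward a contradiction that $F:{}^{\omega_1}2\to[T]$ is a retraction, meaning that $F$ is continuous and $F(b)=b$ for every $b\in[T]$, and then to manufacture a single point of ${}^{\omega_1}2$ that $F$ is forced to send to a ``branch'' lying outside $T$. After replacing $T$ by its pruning (the downward closure of $\bigcup[T]$, which has the same cofinal branches and is still a downward closed Kurepa subtree of ${}^{<\omega_1}2$), I may assume every node of $T$ lies on a cofinal branch; I will also arrange that $T$ is normal, so that every node splits and each initial part $T\res\delta$ is sufficiently bushy.

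The engine is the following self-covering property. Call a node $x\in T$ \emph{good} if $F[\{h:x\subseteq h\}]\subseteq\{h:x\subseteq h\}$, that is, $F(h)\supseteq x$ whenever $h\supseteq x$. First I would prove: for each $b\in[T]$ there is a club $C_b\subseteq\omega_1$ such that $b\res\delta$ is good for all $\delta\in C_b$. This is a routine consequence of continuity together with $F(b)=b$: for each $\beta<\omega_1$, continuity at $b$ furnishes $\alpha_\beta$ with $F(h)\res\beta=b\res\beta$ for all $h\supseteq b\res\alpha_\beta$; taking $C_b$ to be the set of limit closure points of $\beta\mapsto\alpha_\beta$ yields, for $\delta\in C_b$ and $h\supseteq b\res\delta$, that $F(h)\res\beta=b\res\beta$ for every $\beta<\delta$, hence $F(h)\supseteq b\res\delta$. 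Thus good nodes occur cofinally along every cofinal branch.

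Next I would build the escaping point. Fix a countable $M\prec H(\theta)$ with $T,F\in M$ and put $\delta=M\cap\omega_1$, a countable limit ordinal for which $T\cap M=T\res\delta$. I recursively construct an increasing sequence of limit ordinals $\delta_n\to\delta$, branches $b_n\in[T]\cap M$, and an increasing chain of nodes $s\res\delta_n\in T\cap M$ as follows: given $s\res\delta_n$, choose $b_n\in[T]\cap M$ extending it, pick $\delta_{n+1}\in C_{b_n}\cap M$ above both $\delta_n$ and the $n$-th term of a fixed cofinal $\omega$-sequence in $\delta$ (possible, since $C_{b_n}\in M$ is a club and hence unbounded in $\delta$), and set $s\res\delta_{n+1}=b_n\res\delta_{n+1}$. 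Then $\sup_n\delta_n=\delta$, every $s\res\delta_{n+1}$ is good (as $\delta_{n+1}\in C_{b_n}$), and $s=\bigcup_n s\res\delta_n\in{}^{\delta}2$ has all proper initial segments in $T$. Crucially, I would arrange, using that $T$ is Kurepa so the nodes $s\res\delta_n$ can be kept on $\ge\aleph_2$ cofinal branches and therefore always split, a diagonalization guaranteeing $s\notin T_\delta$: at stage $n$ I deviate $b_n$ from a running enumeration of the countably many branches nameable in $M$ (and from the realized limits they determine), so that the zig-zag limit $s$ is a \emph{gap}. Granting this, the contradiction is immediate: for any $g\in{}^{\omega_1}2$ with $g\res\delta=s$ and each $n$, goodness of $s\res\delta_{n+1}$ gives $F(g)\supseteq s\res\delta_{n+1}$; hence $F(g)\res\delta=s$, so $s=F(g)\res\delta\in T_\delta$ because $F(g)\in[T]$ --- contradicting that $s$ is a gap.

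The hard part will be securing the gap simultaneously with the tracking. The tracking half is automatic once $s$ is \emph{defined} to follow each $b_n$ up to a point of $C_{b_n}$, but forcing the resulting limit to miss the countable, non-$M$ level $T_\delta$ is delicate: the restrictions $t\res\gamma$ ($\gamma<\delta$) of a realized $t\in T_\delta$ all lie in $M$, yet $t$ itself does not, so one cannot simply enumerate $T_\delta$ inside the construction. I expect the clean route is a fusion inside $M$ that keeps $s$ on fat (uncountably branching) nodes and diagonalizes against the countable set of branch-restrictions that $M$ can name, invoking normality and Kurepa-ness to always have a splitting direction in which to deviate; a short reflection argument then shows that a limit produced this way cannot coincide with any node of $T_\delta$. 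It is worth emphasizing that the entire argument is carried out in \textsf{ZFC} with no appeal to \textsf{CH}, in accordance with the statement of the theorem.
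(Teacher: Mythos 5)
Your first half is sound, and it is in fact the key lemma of the argument this theorem rests on: the paper itself gives no proof of this statement (it defers to L\"{u}cke--Schlicht \cite{phillip} and the note \cite{notes3}), and the engine of that cited argument is exactly your notion of a good node together with the club $C_b$ of good levels along each $b \in [T]$; your closing implication (any $g \supseteq s$ forces $F(g) \res \delta = s \in T_\delta$) is also correct. The gap is exactly where you locate it, and it is not a removable detail. As written, your fusion can be funneled into a realized node of $T_\delta$: if some $s \res \delta_n$ carries a \emph{unique} cofinal branch $b$ --- pruning does not rule this out; it happens exactly when $[T]$ has isolated points, which a Kurepa tree may have --- then every good node above $s \res \delta_n$ is an initial segment of $b$ (every good node lies on a cofinal branch, since $F(h) \supseteq x$ for any $h \supseteq x$, and the only branch through $s\res\delta_n$ is $b$), so no deviation is possible and your limit is $b \res \delta \in T_\delta$. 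To prevent this you must maintain quantitative branching, and ``uncountably branching'' is the wrong threshold: you need nodes through which at least $\omega_2$ branches pass (``fat'' nodes). The counting that makes fat nodes split --- a non-fat node carries at most $\omega_1$ branches, $T$ has only $\omega_1$ nodes, but $T$ has at least $\omega_2$ branches --- does not close if ``fat'' only means uncountably many branches; this $\omega_2$ versus $\omega_1$ count is precisely where Kurepa-ness enters, and it is absent from your sketch. There is also a second problem you never address: even starting from a fat node, when you follow $b_n$ up to a level of $C_{b_n}$ to secure goodness, the node $b_n \res \delta_{n+1}$ need not be fat, so the construction can lose its splitting room after a single step; you must choose $b_n$ among branches \emph{all} of whose initial segments are fat (such branches exist through every fat node, by a further counting argument). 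Finally, your preliminary move to ``arrange that $T$ is normal'' is unjustified: being a retract depends on the embedding of $[T]$ into ${}^{\omega_1}2$, not merely on its homeomorphism type, so you cannot replace $T$ as in Lemma 8.1; the fat-subtree machinery is what substitutes for normality.

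Once the good/fat machinery is in place, there is also a cleaner route than diagonalizing toward a single gap, and it is essentially the cited argument: build a Cantor scheme rather than one escaping point. Recursively choose nodes $x_w \in T$ for $w \in {}^{<\omega}2$, each good and fat, with $x_{w^\frown 0}$ and $x_{w^\frown 1}$ incomparable above $x_w$ (split using two incomparable fat extensions, then climb along fat branches to good levels), interleaving heights so that $\delta = \sup_n \h_T(x_{z \res n})$ is the same for every $z \in {}^{\omega}2$. For each $z$, the union $s_z = \bigcup_n x_{z \res n}$ is an increasing union of good nodes, so $F(g) \res \delta = s_z$ for any $g \supseteq s_z$, whence $s_z \in T_\delta$; since the $s_z$ are pairwise distinct, $|T_\delta| \ge 2^{\aleph_0}$, contradicting countability of the levels. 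This needs no elementary submodel and no enumeration of $T_\delta$ at all, which is what makes the ``hard part'' of your proposal disappear: L\"{u}cke and Schlicht's hypothesis that $[T]$ has no isolated points is exactly what guarantees splitting without fat nodes, and the fat-node counting is how one removes that hypothesis for Kurepa trees.
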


This theorem is essentially due to L\"{u}cke and Schlicht, who proved that 
for any infinite cardinal $\mu$, if $T$ is a $\mu^+$-Kurepa tree (see the definition given below) 
which is a downwards closed 
subtree of ${}^{< \mu^+}(\mu^+)$ such that $[T]$ has no isolated points, then  
$[T]$ is a not retract of ${}^{\mu^+}(\mu^+)$ (\cite{phillip}). 
They also showed that if $V = L$, then 
for any cardinal $\mu$ with uncountable cofinality, 
there exists a $\mu^+$-Kurepa tree $T$ such that 
$[T]$ is a retract of ${}^{\mu^+}(\mu^+)$. 
It turns out, however, in the special case that $\mu = \omega$, 
the argument of their first result can be adjusted to eliminate the 
assumption of not having isolated points (\cite{notes3}).

Since the existence of a strong Kurepa tree implies \textsf{CH} whereas the 
existence of a Kurepa tree does not, 
it is consistent that there exists 
a Kurepa tree and there does not exist a strong Kurepa tree.

\begin{question}
	Is it consistent with $\textsf{CH}$ that there exists a Kurepa tree but there 
	does not exist a strong Kurepa tree?
\end{question}

In the other extreme, we ask the following.

\begin{question}
	Is it consistent that \textsf{CH} holds, 
	there exists a Kurepa tree, and every Kurepa tree is a 
	strong Kurepa tree?
\end{question}

Note that by Corollary 2.11, any model which gives an affirmative answer to 
Question 8.6 satisfies that 
there exists a Kurepa tree and every Kurepa tree contains an Aronszajn subtree. 
A model with this property was constructed by Komj\'{a}th (\cite{komjath}).

We briefly consider generalizations of the ideas of this article to higher cardinals. 
Recall that for any regular cardinal $\kappa \ge \omega_1$, a 
\emph{$\kappa$-tree} is a tree with height $\kappa$ and levels of size less than $\kappa$. 
A $\kappa$-tree is a \emph{$\kappa$-Aronszajn tree} if it has no cofinal branch. 
The \emph{$\kappa$-tree property} is the statement that there does not exist 
a $\kappa$-Aronszajn tree. 
For a successor cardinal $\kappa$, 
a \emph{$\kappa$-Kurepa tree} is a $\kappa$-tree 
which has at least $\kappa^+$-many cofinal branches. 
We say that $T$ is a \emph{strong $\kappa$-Kurepa tree} if $T$ is a $\kappa$-Kurepa tree 
and ${}^{\kappa}2$ is a continuous image of $[T]$ 
with respect to the cone topologies. 

As the reader can easily check, the arguments of Section 2 generalize to show the following.

\begin{thm}
	Let $\kappa$ be a successor cardinal 
	and suppose that $T$ is a strong $\kappa$-Kurepa tree. 
	Then $T$ contains a downwards closed subtree which is a $\kappa$-Aronszajn subtree.
\end{thm}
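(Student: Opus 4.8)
The plan is to reproduce the two-step argument of Section 2 with $\omega_1$ replaced throughout by $\kappa$, writing $\kappa = \mu^+$ for a cardinal $\mu$; since $\kappa$ is a successor cardinal it is regular, and every level of $T$ has size at most $\mu$. First I would prove the $\kappa$-analogue of Lemma 2.8: fixing a surjective continuous $F : [T] \to {}^{\kappa}2$, I set $H(x) = \bigcap \{ F(b) : b \in [T], \ x \in b \}$, the longest common initial segment of the values $F(b)$ over all cofinal branches $b$ through $x$. Monotonicity (property (1)) is immediate; surjectivity of $F$ together with continuity with respect to the cone topology gives that every $s \in {}^{<\kappa}2$ is an initial segment of some $H(x)$ (property (2)); and continuity at a branch $b$ yields, for each $\zeta < \kappa$, some $y \in b$ with $\dom(H(y)) \ge \zeta$ (property (3)). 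These are exactly the verifications in the proof of Lemma 2.8 and use nothing about $\omega_1$ beyond the definition of the cone topologies, so they transfer verbatim.

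The heart of the matter is the $\kappa$-analogue of Proposition 2.10 and, inside it, the Claim: there is $x \in T$ with $H(x) \in {}^{<\kappa}2$ such that for every $\h_T(x) < \zeta < \kappa$ there is $y \in T_\zeta$ with $x <_T y$ and $H(y) = H(x)$. I would argue by contradiction as before: assuming the Claim fails, each $x$ with $H(x) \in {}^{<\kappa}2$ comes with an ordinal $\zeta_x$ witnessing failure, and I close off to obtain a club $C \subseteq \kappa$ so that for all $\delta \in C$ and all $x \in T \res \delta$ with $H(x) \in {}^{<\kappa}2$ one has $\zeta_x < \delta$. Here is where I would use regularity of $\kappa = \mu^+$ together with $|T \res \delta| \le \mu$ for $\delta < \kappa$ (each level has size $\le \mu$ and there are $\le \mu$ levels), so the relevant suprema stay below $\kappa$ and the closure points form a club. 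Then I pick $\theta \in C$ which is a fixed point of the increasing enumeration of $C$, so that $\ot(C \cap \theta) = \theta$, and in addition with $\theta \ge \mu$; such $\theta$ exists because the fixed points form a club in $\kappa$ and $\mu < \kappa$. The same strictly-increasing argument (now applied along $C \cap \theta$, using $\zeta_{x \res \gamma}$ for predecessors $x \res \gamma$ of $x$) shows $\dom(H(x)) \ge \theta$ for every $x \in T_\theta$.

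I would finish the Claim by the same counting step, now noting that the set $Z \subseteq {}^{\theta}2$ built from $T_\theta$ and $T \res \theta$ has size at most $\mu$, whereas $|{}^{\theta}2| = 2^{|\theta|} = 2^{\mu} > \mu$ because $\mu \le \theta < \mu^+$ forces $|\theta| = \mu$. Thus some $s \in {}^{\theta}2$ avoids $Z$, and property (2) of $H$ produces the same contradiction as in Proposition 2.10. It is worth flagging that this is the only place cardinality enters, and the inequality $2^{\mu} > \mu$ needed is just Cantor's theorem, so no cardinal-arithmetic hypothesis is required. Having fixed $x$ from the Claim, I set $U = \{ y \in T : y \le_T x \text{, or } x <_T y \text{ and } H(y) = H(x) \}$; property (1) makes $U$ downwards closed, the Claim makes $U$ meet every level in the interval $[\h_T(x),\kappa)$ so that $U$ has height $\kappa$ and is a $\kappa$-tree, and since $\dom(H(y)) \le \dom(H(x)) < \kappa$ for all $y \in U$, property (3) rules out a cofinal branch of $U$. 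Hence $U$ is a downwards closed $\kappa$-Aronszajn subtree, as desired.

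I expect the only real obstacle to be bookkeeping the cardinality and club-closure steps uniformly: one must isolate precisely that regularity of the successor cardinal $\kappa$ is what replaces the countability of the levels of an $\omega_1$-tree (keeping $|T \res \delta| \le \mu$ and the closure suprema below $\kappa$), and that choosing $\theta$ with $\mu \le \theta < \kappa$ secures $|{}^{\theta}2| = 2^{\mu} > \mu \ge |Z|$. Once these two points are pinned down, the remainder is a direct transcription of the arguments of Section 2.
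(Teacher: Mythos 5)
Your proposal is correct and takes essentially the same approach as the paper: the paper proves this theorem by simply asserting that the arguments of Section 2 (Lemma 2.8 and Proposition 2.10) generalize from $\omega_1$ to a successor cardinal $\kappa = \mu^+$, which is exactly the transcription you carry out. The two points you flag --- using regularity of $\kappa$ and levels of size $\le \mu$ to form the club $C$, and choosing the fixed point $\theta \in C$ with $\theta \ge \mu$ so that $|{}^{\theta}2| = 2^{\mu} > \mu \ge |Z|$ --- are precisely the details needed for that generalization, and you handle them correctly.
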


\begin{corollary}
	Suppose that $\kappa \ge \omega_2$ is a successor cardinal 
	and the $\kappa$-tree property holds. 
	Then there does not exist a strong $\kappa$-Kurepa tree.
\end{corollary}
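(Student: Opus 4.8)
The plan is to argue by contradiction, reducing immediately to the preceding theorem (Theorem 8.7). First I would suppose, toward a contradiction, that a strong $\kappa$-Kurepa tree $T$ exists. Since $\kappa$ is a successor cardinal, Theorem 8.7 applies and yields a downwards closed subtree $U$ of $T$ which is a $\kappa$-Aronszajn subtree.

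Next I would observe that $U$, considered as a tree in its own right, is a genuine $\kappa$-Aronszajn tree, so that the $\kappa$-tree property can be invoked against it. This requires checking the defining conditions of a $\kappa$-Aronszajn tree: that $U$ has height $\kappa$, that every level of $U$ has size less than $\kappa$, and that $U$ has no cofinal branch. The conditions on the height and on the absence of a cofinal branch are precisely what it means for the subtree produced by Theorem 8.7 to be $\kappa$-Aronszajn. The condition on the levels is automatic: because $U$ is a downwards closed subtree of the $\kappa$-tree $T$, each level of $U$ is contained in the corresponding level of $T$ and therefore has size less than $\kappa$.

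Having verified that $U$ is a $\kappa$-Aronszajn tree, its existence directly contradicts the assumed $\kappa$-tree property, which completes the proof. I do not expect any genuine obstacle here: the corollary follows at once from Theorem 8.7 together with the definition of the $\kappa$-tree property. The hypothesis $\kappa \ge \omega_2$ plays no role in the deduction itself; its only purpose is to exclude the degenerate case $\kappa = \omega_1$, where the $\kappa$-tree property fails in \textsf{ZFC} because Aronszajn trees on $\omega_1$ always exist, and the statement would be vacuous.
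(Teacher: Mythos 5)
Your proof is correct and is exactly the deduction the paper intends: the corollary follows immediately from Theorem 8.7 together with the definition of the $\kappa$-tree property, which is why the paper states it without proof. Your extra checks (that levels of a downwards closed subtree are contained in the levels of $T$, and that $\kappa \ge \omega_2$ merely excludes the vacuous case $\kappa = \omega_1$, where the tree property fails in \textsf{ZFC}) are accurate.
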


The proof of the main theorem can be easily adjusted to prove a more general theorem.

\begin{thm}
	Suppose that $\mu$ is a regular cardinal, $\kappa = \mu^+$, 
	$\mu^{<\mu} = \mu$, and $2^\mu = \kappa$. 
	Then there exists a $\kappa$-closed, $\kappa^+$-c.c.\ forcing which adds a 
	strong $\kappa$-Kurepa tree.
\end{thm}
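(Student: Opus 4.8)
The plan is to re-run the entire development of Sections 3 through 7 verbatim, replacing throughout $\omega \rightsquigarrow \mu$, $\omega_1 \rightsquigarrow \kappa$, $\omega_2 \rightsquigarrow \kappa^+$, the word ``countable'' by ``of size $\le\mu$'', ``$\sigma$-closed'' by ``$\kappa$-closed'' (closure under descending sequences of length $<\kappa$), and ``countable support'' by ``$\le\mu$-support'' (supports of size at most $\mu$). Concretely, a standard binary tree now is a set $t\subseteq{}^{<\kappa}2$ of size $\le\mu$ which is downward closed, splitting, and topped off at some height $\delta_t<\kappa$; the relations $\mathcal R$ and $\mathcal R^*$ are defined using ${}^{<\kappa}2$ and ${}^{\kappa}2$ (with the cone topology on ${}^{\kappa}2$ generated by restrictions to initial segments of length $<\kappa$); and the base forcing $\p$ consists of pairs $(t,h)$ with $h:t\to{}^{<\kappa}2$ satisfying the coherence conditions of Definition 4.1, where now the limit-coherence clause (2c) is imposed at \emph{every} limit height $<\kappa$, including those of uncountable cofinality $\le\mu$. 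With these substitutions, the statements of Lemmas 4.2--4.9, Corollary 4.10, all of Section 5, Theorem 6.2, Proposition 6.12, and Theorem 7.1 make sense verbatim, and I would verify each in turn.

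The only steps whose proofs require more than a literal transcription are the closure and fusion arguments, carried out in the original along $\omega$-sequences but now along sequences of length up to $\mu$. For $\kappa$-closure of $\p$ (the analogue of Lemmas 4.3--4.5), given a descending sequence $\langle (t_\eta,h_\eta):\eta<\lambda\rangle$ with $\lambda<\kappa$, I would pass to a strictly increasing cofinal subsequence of length $\cf(\lambda)\le\mu$ (or handle the eventually-constant case separately), set $t'=\bigcup t_\eta$, $h'=\bigcup h_\eta$, and $\delta=\sup_\eta\delta_{t_\eta}$, which stays below $\kappa$ by regularity of $\kappa=\mu^+$ since $\cf(\lambda)\le\mu<\kappa$. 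The key new point is the construction, for each $x\in t'$, of an $h'$-constant cofinal branch $b_x$: one builds an increasing continuous chain $\langle y_\xi:\xi<\cf(\delta)\rangle$ above $x$, cofinal in $t'$, with $h'(y_\xi)=h'(x)$, using the extension property at successors and taking unions at limits. Here clause (2c) guarantees that at a limit stage the value of $h'$ on the union equals $\bigcup_{\xi'<\xi}h'(y_{\xi'})=h'(x)$, so constancy is preserved even at limits of cofinality up to $\mu$. Since $|t'|\le\mu\cdot\mu=\mu$, the set $B=\{b_x : x\in t'\}$ has size $\le\mu$, so $t'\cup B$ is again a legitimate condition and a lower bound. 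The same $\le\mu$-length fusion, applied to the dense sets $D_i$, yields the analogues of Lemmas 6.8 and 6.9 and of Proposition 6.12, with every occurrence of $\omega$ replaced by the relevant cofinality $\le\mu$; regularity of $\mu$ is exactly what keeps these branch-and-bookkeeping constructions inside the forcing.

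The cardinal-arithmetic hypotheses enter only in the counting and the chain condition. First, $2^\mu=\kappa$ together with $\mu^{<\mu}=\mu$ give $\kappa^\mu=(2^\mu)^\mu=2^\mu=\kappa$ and $|{}^{<\kappa}2|=\kappa$, whence $|\p|=\kappa$ and, more importantly, $|D_\theta|=|\max\{2,\theta\}|^\mu$ (the analogue of Theorem 6.2(2)); taking the iteration length $\theta=2^\kappa$, this is $2^\kappa$. Second, $\kappa^\mu=\kappa$ is precisely the instance of cardinal arithmetic needed for the $\Delta$-system lemma to apply to a family of $\kappa^+$ many conditions whose domains have size $\le\mu$, giving the $\kappa^+$-c.c.\ (the analogue of Lemma 6.10), where the matching on the root uses that there are at most $\kappa$ possibilities for each coordinate $p'(\alpha)$. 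With these in hand, I would run the recursion of Theorem 7.1 over length $\theta=2^\kappa$ with a bookkeeping device that, using $|D_\theta|=2^\kappa$ and $\kappa^+$-c.c.\ (so that there are at most $2^\kappa$ nice names for elements of ${}^{\kappa}2$), arranges that every element of ${}^{\kappa}2$ of the final extension is named by some $\dot g_\alpha$. The generalized Corollary 3.5 then yields that $\dot F$ is a continuous surjection $[\dot T]\to{}^{\kappa}2$, so that $\dot T$ is a strong $\kappa$-Kurepa tree, while $\dot T$ has at least $\kappa^+$ cofinal branches $\dot d_i$ because $\theta\ge\kappa^+$. The forcing is $\kappa$-closed by the generalized Theorem 6.2(1) and $\kappa^+$-c.c.\ by the generalized Lemma 6.10.

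I expect the main obstacle to be exactly this closure-and-fusion at limits of uncountable cofinality $\le\mu$: in the original, every relevant limit has cofinality $\omega$, so clause (2c) is only ever invoked at $\omega$-limits and each fusion is an $\omega$-length construction, whereas here one must check that the $h'$-constant-branch construction and the level-filling at limit heights behave correctly at \emph{all} cofinalities $\le\mu$. This is where the regularity of $\mu$ (and, for the counting, $\mu^{<\mu}=\mu$) does the essential work.
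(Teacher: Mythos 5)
Your overall strategy---rerunning Sections 3--7 with $\omega\mapsto\mu$, $\omega_1\mapsto\kappa$, $\omega_2\mapsto\kappa^+$ and $\le\mu$-supports---is exactly the adjustment the paper has in mind (the paper gives no further details for this theorem). But there is a genuine gap at precisely the point you flag as the main obstacle, and your proposed resolution does not close it. You keep the definition of a condition verbatim: a tree $t\subseteq{}^{<\kappa}2$ of size $\le\mu$ which is downward closed, splitting, and topped off at height $\delta_t$. With that definition your closure argument fails: in the construction of the $h'$-constant branch $\langle y_\xi:\xi<\cf(\delta)\rangle$, at a limit stage $\xi$ the union $\bigcup_{\xi'<\xi}y_{\xi'}$ need not be an element of $t'$ at all. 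Clause (2c) cannot ``guarantee'' its $h'$-value, because (2c) only constrains $h'$ at nodes that already belong to the tree (the domain of $h'$); and the extension property of Definition 4.1(3) can only be invoked at nodes of $t'$, so the construction cannot be continued past such a stage---indeed the partial union may have no extension in $t'$ whatsoever. This is not a presentational issue: with your definitions the forcing is provably not $\kappa$-closed. Take $\mu=\omega_1$ and let an adversary build a decreasing $\omega_1$-sequence of conditions with $h\equiv\emptyset$ whose trees are the successive initial segments of a standard construction of a (rooted, binary, splitting) Aronszajn subtree of ${}^{<\omega_2}2$: at each limit stage only countably many branches are kept, enough that every node still extends to the top, but arranged so that the union $t'$, of height $\delta$ with $\cf(\delta)=\omega_1$, has no cofinal branch all of whose initial segments lie in $t'$. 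Any lower bound $(u,k)$ would end-extend every condition, so $u\cap{}^{<\delta}2=t'$ and $\delta_u\ge\delta$, and every node of $u$ would extend to a node of $u$ of level $\ge\delta$, whose restriction to $\delta$ is a fully internal cofinal branch of $t'$---a contradiction.

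The missing idea is the standard device for tree forcings with more than countable closure: a condition's tree must in addition be required to be closed under limits at levels of small cofinality, i.e.\ whenever $\gamma\le\delta_t$ has $\cf(\gamma)<\mu$, every cofinal branch of $t\cap{}^{<\gamma}2$ belongs to $t$. The hypothesis $\mu^{<\mu}=\mu$ is exactly what makes this compatible with $|t|\le\mu$ (a tree of size $\le\mu$ has at most $\mu^{<\mu}=\mu$ cofinal branches below a level of cofinality $<\mu$). With this clause your chain construction does go through: at a limit stage $\xi$ the partial union is a cofinal branch of $t_{\eta^*}\cap{}^{<\gamma_\xi}2$ for $\eta^*=\sup_{\xi'<\xi}\eta_{\xi'}$, where $\gamma_\xi$ has cofinality $\cf(\xi)<\mu$, hence lies in $t_{\eta^*}\subseteq t'$, and only then does (2c) give it the $h'$-value $h'(x)$. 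The clause must then be carried through the whole development: the lower bound $t'\cup B$ must itself satisfy it, so when $\cf(\delta)<\mu$ the set $B$ has to consist of \emph{all} cofinal branches of $t'$ (not merely the chosen $b_x$'s and $b_\alpha$'s), and the analogues of Definition 6.4, Lemma 6.7, Lemma 6.9, and Proposition 6.12 need the same modification. The remainder of your outline---the computations $2^{<\kappa}=\kappa$ and $\kappa^\mu=\kappa$, the $\Delta$-system argument for the $\kappa^+$-c.c., and the bookkeeping over an iteration of length $2^\kappa$---is correct as stated.
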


On the other hand, we do not know the answer to the following.

\begin{question}
	Is it consistent that for some singular cardinal $\lambda$, there exists a strong 
	$\lambda^+$-Kurepa tree $T$?
\end{question}

\vspace{3.5pt}

\begin{center}\textsc{Acknowledgements}\end{center}

\vspace{3.5pt}

The author thanks David Chodounsk\'{y} for communicating to him the problem of the 
existence of a strong Kurepa tree and explaining the origin of the question.

\providecommand{\bysame}{\leavevmode\hbox to3em{\hrulefill}\thinspace}
\providecommand{\MR}{\relax\ifhmode\unskip\space\fi MR }
\providecommand{\MRhref}[2]{%
  \href{http://www.ams.org/mathscinet-getitem?mr=#1}{#2}
}
\providecommand{\href}[2]{#2}


\end{document}